\newcommand{\newreptheorem}[2]{\newtheorem*{rep@#1}{\rep@title}\newenvironment{rep#1}[1]{\def\rep@title{#2 \ref*{##1}}\begin{rep@#1}}{\end{rep@#1}}}
\DeclareMathOperator{\Spec}{Spec}
\DeclareMathOperator{\Proj}{Proj}
\newcommand{\bR}{\mathbb{R}}
\newcommand{\bQ}{\mathbb{Q}}
\newcommand{\bN}{\mathbb{N}}
\newcommand{\cO}{\mathcal{O}}
\newcommand{\cent}{\textup{centre}}
\newcommand{\codim}{\textup{codim}}
\newcommand{\Aut}{\textup{Aut}}
\newcommand{\Norm}{\textup{Norm}}
\newcommand{\Ht}[1]{H^{i}_{t}}
\newcommand{\stacks}[1]{\cite[\href{https://stacks.math.columbia.edu/tag/#1}{Tag #1}]{stacks-project}}
\newcommand{\ox}[1][X]{\mathcal{O}_{#1}}
\newtheorem{case}{Case}
\newcommand\myworries[1]{\textcolor{red}{#1}}
\title[Abundance for arithmetic threefolds]{Abundance theorem for threefolds in mixed characteristic}
\author{Fabio Bernasconi, Iacopo Brivio, and Liam Stigant}
\address{\'Ecole Polytechnique F\'ed\'erale de Lausanne, Chair of Algebraic Geometry
	(B\^atiment MA), Station 8, CH-1015 Lausanne} 
\email{fabio.bernasconi@epfl.ch}
\address{National Center for Theoretical Sciences, Taipei, 106, Taiwan}
\email{ibrivio@ncts.ntu.edu.tw}
\address{Department of Mathematics, Imperial College London, 180 Queen's Gate, 
	London SW7 2AZ, UK} 
\email{l.stigant18@imperial.ac.uk}
\subjclass[2020]{14J30, 14J32, 14M22, 14E30, 14G17, 14B05}
\keywords{Abundance conjecture, mixed characteristic, invariance of plurigenera}
\begin{document}
	\begin{abstract}
		We prove the abundance theorem for arithmetic klt threefold pairs whose closed point have residue characteristic greater than 5.
		As a consequence, we give a sufficient condition for the asymptotic invariance of plurigenera for certain families of singular surface pairs to hold in mixed characteristic.
	\end{abstract}

\maketitle
\tableofcontents
	
\section{Introduction}
	
The Minimal Model Program (MMP for short) is a conjecture on the birational structure of algebraic varieties which aims to extend the theory of minimal models of surfaces to higher-dimensional varieties.
While the MMP is usually studied for projective varieties over an algebraically closed field, it is worth developing the program for projective schemes over more general bases, especially if one is interested in arithmetic applications. 
Although the MMP for surfaces over excellent rings can be considered a classical result (see \cite{Sha66} for the regular case and \cite{Tan18} for the logarithmic and singular case), only recent breakthroughs in commutative algebra in mixed characteristic (see \cite{And18, MS21, bhatt2020cohenmacaulayness}) have allowed progress on the birational geometry of arithmetic threefolds in the works \cite{takamatsu2021minimal,bhatt2020, XX22}, where the authors show that one can run the MMP for threefolds over excellent rings of mixed characteristic $(0, p>3)$. 
	
After showing the existence of minimal models, it is natural to ask whether the abundance conjecture, which states that a minimal model has semiample canonical divisor, holds for threefolds in mixed characteristic. 
The case of surfaces defined over a field was proven in increasing generality in \cite{fujino2012log, Tan14, tanaka2020abundance} while the case of threefolds over a perfect field characteristic $p>5$ is still open, though the non-vanishing conjecture being settled in \cite{XZ19, Wit} and various cases have been verified (\cite{DW19, Zha20}).
	
The aim of this article is to show the validity of the abundance conjecture for threefolds in mixed characteristic.
	
\begin{theorem}[\autoref{abundance}]\label{main-thm}
	Let $R$ be an excellent domain of finite Krull dimension, equipped with a dualising complex and whose residue fields of closed points have characteristic $p>5$.
	Let $\pi \colon X \to T$ be a projective morphism of quasi-projective $R$-schemes such that $\pi(X)$ is positive-dimensional.
	Suppose $(X,B)$ is a three-dimensional klt pair with $\mathbb{R}$-boundary. If $K_X+B$ is $\pi$-nef, then it is $\pi$-semiample.
\end{theorem}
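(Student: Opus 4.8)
The plan is to funnel everything down to the abundance theorem for surfaces over excellent bases, using the hypothesis $\dim \pi(X) \ge 1$ in an essential way: since $X$ is three-dimensional, the generic fibre of $X \to \pi(X)$ has dimension at most two, so I never need to invoke the (still open) absolute abundance for threefolds over a field. First I would reduce from an $\mathbb{R}$-boundary to a $\mathbb{Q}$-boundary. Writing $K_X+B$ as a convex combination of $\pi$-nef $\mathbb{Q}$-Cartier divisors $K_X+B_j$ spanning the same rational polytope, it suffices to treat each $K_X+B_j$, since a positive combination of $\pi$-semiample $\mathbb{Q}$-divisors inducing compatible relative contractions is again $\pi$-semiample. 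From now on $B$ is a $\mathbb{Q}$-boundary, and I organise the argument around the relative numerical dimension
\[ \nu := \nu_{\pi}(K_X+B), \qquad 0 \le \nu \le d := \dim X - \dim \pi(X) \le 2. \]

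The extreme case $\nu = d$, in which $K_X+B$ is $\pi$-big and $\pi$-nef, is settled by the relative base-point-free theorem for klt threefold pairs in mixed characteristic, available for $p>5$ from the minimal model program developed in \cite{takamatsu2021minimal, bhatt2020, XX22}. The content is therefore in the deficient cases $\nu<d$, which I would attack by induction on $\dim \pi(X)$. Passing to a projective model of $T$ and choosing a general very ample divisor $A$ on it, set $H := \pi^{*}A$ and $X_H := \pi^{-1}(A)$. For general $A$ the pair $(X_H, B|_{X_H})$ is klt by Bertini (as $H$ is Cartier there is no adjunction correction), the restriction $K_{X_H}+B|_{X_H} = (K_X+B)|_{X_H}$ is nef over $A$, and $\dim \pi(X_H) = \dim \pi(X)-1$. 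When $\dim \pi(X)=1$ this exhibits $X_H$ as a surface over a finite base, so $(K_X+B)|_{X_H}$ is semiample by the surface abundance theorem \cite{Tan14, tanaka2020abundance}; the larger values of $\dim \pi(X)$ are fed into the inductive hypothesis, so that in all cases $K_X+B$ is semiample after restriction to a general $X_H$ and to the generic fibre.

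The crux is the globalisation step: lifting semiampleness from hyperplane sections and fibres to genuine $\pi$-semiampleness of $K_X+B$. Because ordinary Kodaira-type vanishing is unavailable, I would run this through a Keel-type criterion valid in mixed characteristic, reducing $\pi$-semiampleness of $K_X+B$ to that of its restriction to the relative null locus $\mathbb{E}_{\pi}(K_X+B)$, the union of the subvarieties on which $K_X+B$ fails to be $\pi$-big. This locus is lower-dimensional and does not dominate $\pi(X)$, so its components are surfaces, curves, or finite over $T$; on them $K_X+B$ restricts to a nef class handled by the inductive hypothesis together with surface abundance, after which the Keel-type statement propagates semiampleness back to $X$. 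I expect the establishment and application of this semiampleness criterion in mixed characteristic---resting on the big Cohen--Macaulay algebras and $+$-regularity of \cite{And18, MS21, bhatt2020cohenmacaulayness}, and this is precisely where $p>5$ enters---to be the main obstacle.

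It remains to eliminate the genuinely intermediate regime $0<\nu<d$ (forced to $d=2,\ \nu=1$) and the relatively numerically trivial regime $\nu=0$. For the former I would first secure relative non-vanishing $\kappa_{\pi}(K_X+B)=\nu$ on the generic fibre, form the relative Iitaka fibration $g\colon X \dashrightarrow Y$ over $T$, and apply the canonical bundle formula in its mixed-characteristic form to write
\[ K_X+B \sim_{\mathbb{Q},\pi} g^{*}(K_Y+B_Y), \]
with $(Y,B_Y)$ a pair of dimension $\dim \pi(X)+\nu \le 2$, so that $\pi$-abundance on $X$ descends to the surface case on $Y$. The residual case $\nu=0$ amounts to showing $K_X+B \sim_{\mathbb{Q},\pi} 0$, a relative non-vanishing statement which I would prove along the lines of \cite{XZ19, Wit}; I regard this, together with the mixed-characteristic Keel step above, as the two genuine difficulties of the theorem.
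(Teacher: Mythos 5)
Your reduction to $\mathbb{Q}$-boundaries via the Shokurov polytope and your treatment of the relatively big case via the base-point-free theorem both match the paper, but the two steps you yourself flag as the genuine difficulties rest on claims that fail. First, the globalisation step: you assert that the relative null locus $\mathbb{E}(K_X+B)$ ``is lower-dimensional and does not dominate $\pi(X)$''. This is backwards precisely in the deficient cases $\nu<d$: there $K_X+B$ is not big on the generic fibre of $\pi$, so every fibre --- indeed $X$ itself --- is an integral subscheme on which $K_X+B$ fails to be relatively big, hence $\mathbb{E}(K_X+B)=X$ and Keel--Witaszek applied directly to $K_X+B$ is vacuous. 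The paper's actual route (\autoref{two}, \autoref{three}, \autoref{EDsemiampleness2}) is to first produce the EWM contraction $f\colon X\to Z$ to an algebraic space, run a $(K_X+B)$-trivial MMP to make $f$ equi-dimensional, show $K_X+B\sim_{\mathbb{Q}}0$ over the regular big open locus $U\subseteq Z$ by \autoref{lemma:EDsemiampleness}, and only then cut by a general ample hyperplane section $S$ of $X$ itself (not a pullback from $T$): on $S$ the divisor becomes big, $\mathbb{E}((K_X+B)|_S)$ is genuinely small and lies over $U$, Keel--Witaszek applies on $S$, and semiampleness descends back to $X$ through the norm argument of \autoref{pullback}. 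Your hyperplanes $H=\pi^{*}A$ cannot play this role: when $\dim\pi(X)=1$ they are closed fibres, which in mixed characteristic need not be normal or klt (there is no generic smoothness; compare Section 4 of the paper, where normality of $X_k$ already requires the extra plt hypothesis), and fibrewise semiampleness does not assemble into relative semiampleness because of purely inseparable Stein factorisations on special fibres (\autoref{r-connected fibers v contraction}).

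Second, for the intermediate regime you invoke ``the canonical bundle formula in its mixed-characteristic form'' to descend to a surface pair $(Y,B_Y)$. No such formula is available: the paper states explicitly that the canonical bundle formula ``is not available in the positive or mixed characteristic settings'', and this case is instead subsumed in the EWM/equi-dimensional argument above. Finally, the case $\nu=0$ does not require the non-vanishing machinery of \cite{XZ19, Wit}: since $\dim\pi(X)\geq1$ the generic fibre is a surface, abundance there gives $(K_X+B)|_{X_{K(T)}}\sim_{\mathbb{Q}}0$ at once, and \autoref{lemma:EDsemiampleness} (flatness over a Dedekind base plus nefness) upgrades this to $K_X+B\sim_{\mathbb{Q},T}0$; no threefold non-vanishing input is needed anywhere in the proof.
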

	
The key idea of the proof is to apply abundance on the generic fibre $X_\eta$ where $\eta$ is the generic point of $\pi(X)$ and on the characteristic 0 part $X_{\mathbb{Q}} \to T_{\mathbb{Q}}$. 
From there we spread out the semiampleness over the positive characteristic points of $\pi(X)$ by a careful application of the MMP developed in \cite{bhatt2020} and Keel-Witaszek's base-point-free theorem \cite{witaszek2020keels}. 
One of the main tools of our proof is the following semiampleness criterion for EWM divisors.

\begin{proposition}[\autoref{EDsemiampleness2}]
	Let $X \to T$ be a projective contraction of normal quasi-projective schemes over $R$, where $\dim(X) \leq 3$.
	Let $L$ be an EWM $\mathbb{Q}$-Cartier $\mathbb{Q}$-divisor on $X$ such that its associated EWM contraction $f \colon X \to Z$ is equi-dimensional.
	If $L|_{X_{K(T)}}$ and $L|_{X_{\mathbb{Q}}}$ are semiample, then $L$ is semiample.
	\end{proposition}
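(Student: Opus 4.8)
The plan is to reduce, by spreading out the two given semiampleness statements, to finitely many positive-characteristic fibres, and then to conclude by Keel--Witaszek's base-point-free theorem. Throughout write $\eta$ for the generic point of $T$, so that $X_{K(T)} = X_\eta$; since $\pi(X)$ is positive-dimensional and $\dim X \le 3$, every fibre of $\pi$ over a closed point has dimension at most $2$.

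First I would spread out. As $L|_{X_{\bQ}}$ is semiample, some multiple $mL$ is globally generated after inverting finitely many primes; by finite presentation this propagates, giving an integer $N>0$ with $L$ semiample over $\Spec \bZ[1/N]$, i.e.\ over every residue characteristic not dividing $N$. Likewise, semiampleness of $L|_{X_\eta}$ spreads out to an open $U \subseteq T$ containing $\eta$, so that $L$ is semiample on $\pi^{-1}(U)$. Consequently $L$ is semiample on the open set $V = \pi^{-1}(U) \cup X_{\bZ[1/N]}$, whose complement is contained in $\bigcup_{p\mid N}\bigl(T_{\mathbb{F}_p}\setminus U\bigr)$, a closed subset of $T$ supported on finitely many positive-characteristic fibres and not containing $\eta$.

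It then remains to show that $L$ is semiample in a neighbourhood of $\pi^{-1}\bigl(T_{\mathbb{F}_p}\setminus U\bigr)$ for each of the finitely many primes $p\mid N$, a purely positive-characteristic and (at most) surface-dimensional problem. Here I would invoke the relative Keel--Witaszek base-point-free theorem \cite{witaszek2020keels}: as $L$ is a nef EWM divisor, its semiampleness over these positive-characteristic points is equivalent to the semiampleness of its restriction to the exceptional locus $\exc(f)$ of the EWM contraction. Because $f$ is equidimensional and $\dim X\le 3$, this exceptional locus has dimension at most $2$; the restriction of $L$ to $\exc(f)\cap V$ is already semiample by the previous step, and on the remaining lower-dimensional strata semiampleness follows from the abundance theorem for surfaces over fields of characteristic $p>5$ \cite{tanaka2020abundance}, once one checks that the numerically $f$-trivial pieces are torsion. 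That torsion statement is supplied precisely by the semiampleness already known on the dense open $V$: since the EWM contraction is unique, the fibrewise semiample fibrations are forced to be restrictions of $f$.

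The main obstacle is this last positive-characteristic gluing. The delicate point is not nefness but torsion: over a field of characteristic $p$ a numerically trivial nef divisor need not be semiample, so one cannot conclude on $\exc(f)$ from nefness alone. The equidimensionality hypothesis on $f$ is what makes the argument go through, as it prevents the fibres of $f$, and hence $\exc(f)$, from jumping in dimension, so that the semiample structures constructed on $V$ and fibrewise by Keel's theorem are compatible and descend to a single morphism witnessing the semiampleness of $L$ on all of $X$.
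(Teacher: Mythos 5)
Your spreading-out step is fine as far as it goes, but the way you propose to finish does not work, and the failure occurs exactly at the point you yourself flag as delicate. The essential case of the proposition is when $L$ is \emph{not} big: if $L$ is big, equi-dimensionality forces the EWM contraction $f$ to be an isomorphism and $L$ to be ample, and there is nothing to do. When $L$ is not big, the exceptional locus in the sense of Keel--Witaszek, $\mathbb{E}(L)=\bigcup\{V : L|_V \text{ not big}\}$, is \emph{all of} $X$, since $X$ itself is such a $V$; equivalently, $f$ contracts every fibre, so the contracted locus is three-dimensional, not of dimension $\leq 2$ as you claim. Hence the reduction ``$L$ semiample $\iff$ $L|_{\mathbb{E}(L)}$ and $L|_{X_{\bQ}}$ semiample'' is vacuous and your plan never produces a nontrivial application of Keel--Witaszek. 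Relatedly, you cannot localise the problem ``near $T_{\mathbb{F}_p}\setminus U$'': every open neighbourhood in $T$ of a positive-characteristic point still contains characteristic-zero points, so there is no purely positive-characteristic subproblem to hand off; the only tool over a genuinely mixed-characteristic base is the global Keel--Witaszek statement, which returns you to $\mathbb{E}(L)=X$. Finally, the ``torsion'' statement you correctly identify as the crux is never actually established: uniqueness of the EWM contraction does not show that the numerically $f$-trivial part of $L$ is $\bQ$-linearly trivial.

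The paper closes these gaps as follows. Equi-dimensionality enters through \autoref{lemma:EDsemiampleness}: over a big regular open $U\subseteq Z$ (which exists by \autoref{l-cod1}), semiampleness of $L|_{X_{K(T)}}$ forces $L|_{X_U}\sim_{\bQ,U}0$ --- this is the torsion statement. To deal with the finitely many points of $Z\setminus U$, one cuts $X$ by a general hyperplane $S$ meeting the fibres over $Z\setminus U$ in finitely many points: now $L|_S$ \emph{is} big, $\mathbb{E}(L|_S)$ is a genuine proper closed subset contained in $X_U$ where $L$ is already relatively trivial, and \cite[Theorem 6.1]{witaszek2020keels} applies on $S$. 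Semiampleness then descends from $S$ to the two-dimensional base $Y$ of the auxiliary equi-dimensional fibration provided by \autoref{two}, via the norm argument of \autoref{pullback}, and hence to $L$ itself. Without the bigness reduction via the hyperplane section, your argument has no way to make Keel--Witaszek say anything.
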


A well-known and immediate consequence of the existence of minimal models and the abundance theorem is the finite generation of the canonical ring.
	
\begin{corollary}
    With $R$ as in \autoref{main-thm}, let $\pi \colon X\to T$ be a projective morphism of quasi-projective $R$-schemes such that $\pi(X)$ is positive-dimensional.
	Suppose $(X,B)$ is a three-dimensional klt pair with $\mathbb{Q}$-boundary. 
	Then the canonical $\ox[T]$-algebra
	\[R(K_X+B, \pi):=\bigoplus_{m \in \mathbb{N}} \pi_{*}\ox(\lfloor m(K_{X}+B)\rfloor)\]
	is finitely generated.
\end{corollary}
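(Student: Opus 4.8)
The plan is to deduce finite generation from \autoref{main-thm} by running a relative Minimal Model Program and using that the canonical ring is invariant along it. Since $B$ is a $\mathbb{Q}$-boundary, by the existence of minimal models for klt threefold pairs over excellent rings of residue characteristic $p>5$ established in \cite{bhatt2020, takamatsu2021minimal, XX22}, I may run a $(K_X+B)$-MMP over $T$. Each step is either a flip, which is an isomorphism in codimension one, or a divisorial contraction of a $(K_X+B)$-negative divisor; in both cases the reflexive sheaf $\ox(\lfloor m(K_X+B)\rfloor)$ corresponds to the analogous sheaf on the new model---by isomorphism in codimension one for a flip, and by the negativity on the contracted divisor for a divisorial contraction---so the graded pieces $\pi_{*}\ox(\lfloor m(K_X+B)\rfloor)$ are unchanged for every $m\ge 0$. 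Thus $R(K_X+B,\pi)$ is preserved, and it suffices to establish finite generation for the output of the MMP. Note also that the induced morphism to $T$ has the same image $\pi(X)$, which therefore remains positive-dimensional.

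Suppose first that $K_X+B$ is $\pi$-pseudoeffective. Then the MMP terminates with a relative minimal model $(X',B')$, with induced projective morphism $\pi'\colon X'\to T$, on which $K_{X'}+B'$ is $\pi'$-nef. Since $\pi'(X')=\pi(X)$ is positive-dimensional, \autoref{main-thm} applies and shows that $K_{X'}+B'$ is $\pi'$-semiample. A $\pi'$-semiample $\mathbb{Q}$-Cartier divisor has finitely generated relative section ring: a sufficiently divisible multiple is $\pi'$-base-point-free and realises $K_{X'}+B'$ as the pullback of a relatively ample divisor under the induced contraction $X'\to Z$ over $T$, so $R(K_{X'}+B',\pi')$ coincides with the section ring of an ample divisor on $Z/T$ and is finitely generated. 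Combined with the invariance above, this yields finite generation of $R(K_X+B,\pi)$.

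It remains to treat the case where $K_X+B$ is not $\pi$-pseudoeffective, in which the MMP ends with a Mori fibre space. Here the restriction of $K_X+B$ to the generic fibre of $X\to\pi(X)$ is not pseudoeffective, so its positive multiples have no sections over a dense open of $\pi(X)$, and finite generation follows by a short standard argument. The only points that genuinely require care are this last reduction and the verification of the canonical-ring invariance across flips and divisorial contractions in the present mixed-characteristic, reflexive setting; everything substantive is already contained in the existence of minimal models and in \autoref{main-thm}, which is why the corollary is immediate.
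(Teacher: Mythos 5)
Your argument is correct and is precisely the one the paper has in mind: the corollary is stated there without proof as an immediate consequence of the existence of minimal models and the abundance theorem, and your write-up is the standard expansion of that (run the relative MMP, check the canonical ring is unchanged across flips and negative divisorial contractions, terminate, apply \autoref{main-thm}, and use that a relatively semiample $\mathbb{Q}$-divisor has finitely generated section ring). One small point to make explicit: \autoref{MMP} requires a $\mathbb{Q}$-factorial dlt pair, so you should first replace $X$ by the small $\mathbb{Q}$-factorialisation of \autoref{Q-factorial}, which does not change $R(K_X+B,\pi)$ since the modification is small.
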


In characteristic $0$, finite generation of the canonical ring follows from the case for varieties of log general type (\cite{BCHM10}) and a result of Fujino and Mori (\cite[Theorem 5.2]{FM00}). However, their result requires a canonical bundle formula which is not available in the positive or mixed characteristic settings.

\begin{remark}
	An important step towards abundance consists of proving the non-vanishing conjecture, which is already a difficult result for threefolds over fields of characteristic $0$ (resp.~ over perfect fields of characteristic $p>5$), proven in \cite{Miy88} (see also \cite[Section 9]{FA}) (resp.~ \cite{XZ19, Wit}).
	However in our situation the non-vanishing follows immediately from non-vanishing on the generic fibre of $\pi$, which has dimension at most 2 and so the result is well-known.
\end{remark}
	
As a further application of \autoref{main-thm} we study the invariance of plurigenera for families of klt surface pairs in mixed characteristic.
It is well-known that invariance of plurigenera might fail over DVR of positive or mixed characteristic as shown in \cite{Lang, KU, Suh08, Bri20, Kol22}.
However it was proven in \cite{EH} that an asymptotic version of invariance of plurigenera holds for families of log smooth surface pairs if the ample model is not an elliptic fibration.
Using techniques of \cite{HMX18}, we use the MMP and \autoref{main-thm} to show an asymptotic invariance of plurigenera for families of klt surface pairs (possibly defined over imperfect fields), extending the work of Egbert and Hacon.
	
\begin{theorem}[\autoref{thm:ADIOP_final2}]\label{thm:ADIOP_final}
	Let $R$ be an excellent DVR such that its residue field $k$ has characteristic $p>5$.
	Let $X\to \Spec(R)$ be a projective contraction of quasi-projective $R$-schemes and suppose $(X,B)$ is a klt threefold pair.  Suppose that the following conditions are satisfied:
	\begin{enumerate}
		\item[(1)] $(X,X_{k}+B)$ is plt;
		\item[(2)] if $V$ is a non-canonical centre of $(X,X_k+B)$ contained in ${\mathbf{B}_{-}(K_{X}+B)}$, then $\dim (V_{k})=\dim (V) -1$.
	\end{enumerate}
		
	Then $X_k$ is normal. 
	Suppose further that at least one of the following holds:
	\begin{enumerate}
		\item $\kappa(K_{X_{k}}+B_{k}) \neq 1$; or
		\item $B_{k}$ is big over $\textup{Proj}(K_{X_{k}}+B_{k})$.
		\end{enumerate}	
	Then there is $m_{0} \in \mathbb{N}$ such that the equality
	$$h^{0}(X_{K},m(K_{X_{K}}+B_{K}))=h^{0}(X_{k},m(K_{X_{k}}+B_{k}))$$
	holds for all $m \in m_{0}\mathbb{N}$.
\end{theorem}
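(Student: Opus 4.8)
The plan is to first extract normality of $X_k$ from the plt hypothesis, then to translate the asymptotic invariance into a single cohomological vanishing on the total space, and finally to verify that vanishing by running a relative MMP and passing to the relative ample model furnished by \autoref{main-thm}.

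\emph{Normality and adjunction.} Since $\pi\colon X\to\Spec(R)$ is a contraction and $R$ is a DVR with uniformiser $t$, the central fibre is the principal divisor $X_k=\div(t)$, so $X_k\sim 0$ and $\ox(X_k)|_{X_k}\cong\ox[X_k]$. As $(X,X_k+B)$ is plt, the reduced divisor $X_k=\lfloor X_k+B\rfloor$ has normal, pairwise disjoint components; since $X_k$ is connected (because $\pi_*\ox=\ox[\Spec(R)]$) it is irreducible and normal. Plt adjunction along $X_k$ then produces a klt pair $(X_k,B_k)$ with $B_k=\textup{Diff}_{X_k}(B)$ and
\[(K_X+B)|_{X_k}\sim_{\bQ}(K_X+X_k+B)|_{X_k}=K_{X_k}+B_k,\]
the first equivalence using $X_k\sim 0$. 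This is the sense in which $B_k$ appears in the statement.

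\emph{Reduction to a vanishing.} Fix $m_0$ so that $m_0(K_X+B)$ is Cartier and write $\mathcal M_m=\ox(m(K_X+B))$ for $m\in m_0\bN$; by the previous step $\mathcal M_m|_{X_k}\cong\ox[X_k](m(K_{X_k}+B_k))$. Multiplication by $t$ together with the identification $\mathcal M_m(-X_k)\cong\mathcal M_m$ gives the exact sequence
\[0\to\mathcal M_m\xrightarrow{\;t\;}\mathcal M_m\to\mathcal M_m|_{X_k}\to 0,\]
whose long exact sequence in cohomology reads, since $H^0(X,\mathcal M_m)$ is a finite torsion-free, hence free, $R$-module of rank $h^0(X_K,m(K_{X_K}+B_K))$,
\[h^0(X_k,m(K_{X_k}+B_k))=h^0(X_K,m(K_{X_K}+B_K))+\dim_k H^1(X,\mathcal M_m)[t].\]
Thus the asserted equality for all $m\in m_0\bN$ is \emph{equivalent} to the vanishing of the $t$-torsion in $H^1(X,\mathcal M_m)$, i.e.\ to surjectivity of the restriction $H^0(X,\mathcal M_m)\twoheadrightarrow H^0(X_k,\mathcal M_m|_{X_k})$; the inequality $\geq$ always holds by upper semicontinuity.

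\emph{Reduction to the relative ample model.} The log canonical ring, and hence both sides of the desired equality, are unchanged by a $(K_X+B)$-MMP over $\Spec(R)$, provided the MMP restricts to the central fibre. Here the plt hypothesis and the condition on non-canonical centres in $\mathbf{B}_{-}(K_X+B)$ are exactly what is needed, in the spirit of \cite{HMX18}, to guarantee that every step of a relative MMP preserves the plt structure of $(X,X_k+B)$ and induces the corresponding operation on $X_k$, so that neither fibre's plurigenera change. After this reduction $K_X+B$ is $\pi$-nef, hence $\pi$-semiample by \autoref{main-thm}; let $g\colon X\to Z$ be the associated relative ample model over $R$, with $K_X+B\sim_{\bQ,R}g^*A$ for a relatively ample $\bQ$-divisor $A$. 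Since $g_*\ox=\ox[Z]$ we get $H^0(X,\mathcal M_m)=H^0(Z,\ox[Z](mA))$, while on the fibre $\mathcal M_m|_{X_k}=g_k^*\ox[Z_k](mA_k)$. When the base change is compatible, namely $g_{k*}\ox[X_k]=\ox[Z_k]$, the problem becomes invariance of $h^0$ for the relatively ample $A$ on the flat proper family $Z\to\Spec(R)$, which holds by Serre vanishing and cohomology-and-base-change. For $\kappa(K_{X_k}+B_k)=0$ (then $A=0$) and for $\kappa=2$ (then $g$ is birational and $Z_k$ is the normal ample model of $(X_k,B_k)$) this compatibility is immediate, disposing of the case $\kappa(K_{X_k}+B_k)\neq 1$.

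\emph{The main obstacle.} The difficulty is concentrated in the fibration case $\kappa(K_{X_k}+B_k)=1$, where $g$ contracts a one-dimensional family of curves and $Z\to\Spec(R)$ is a family of curves. Here both $g_{k*}\ox[X_k]=\ox[Z_k]$ and, more seriously, the commutation of $g_*\mathcal M_m$ with $-\otimes_R k$ can fail: this is the classical obstruction coming from elliptic (or quasi-elliptic) fibrations, in which the relative $H^1$ of the fibres contributes a term that jumps under specialisation, and it is precisely this phenomenon that yields the known counterexamples to invariance of plurigenera. The remaining hypothesis, that $B_k$ be big over $Z_k=\Proj(K_{X_k}+B_k)$, is designed to remove it: bigness of the horizontal boundary forces the general fibre of $g_k$ to be of log general type, ruling out a genuine elliptic fibration and supplying, through the different on the fibres, enough positivity to establish the relative vanishing $R^1g_*\mathcal M_m=0$ and hence the base-change compatibility above. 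Assembling the three reductions then yields the equality for all $m\in m_0\bN$, and I expect the verification of this relative vanishing in the fibration case — where the mixed-characteristic failure of Kodaira-type vanishing must be circumvented using the bigness of $B_k$ and the techniques of \cite{HMX18} — to be the principal technical obstacle.
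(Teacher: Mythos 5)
Your architecture matches the paper's (normality of $X_k$, an MMP in families controlled by hypotheses (1)--(2), semiampleness via \autoref{abundance}, then base-change on the relative ample model), and you correctly locate the crux in the case $\kappa(K_{X_k}+B_k)=1$. But that crux is exactly what you do not prove: you state that the bigness of $B_k$ over $Z_k$ should "supply enough positivity to establish the relative vanishing $R^1g_*\mathcal{M}_m=0$" and that you "expect" this to be the principal obstacle. The paper's resolution (\autoref{p-1-case}) is not a direct vanishing for $\mathcal{M}_m$; it is an MMP trick: since $B$ is big over $Z$, $K_X$ is not pseudo-effective over $Z$, so one runs a $(K_X+B)$-trivial $K_X$-MMP over $Z$ terminating in a Mori fibre space, and an irreducibility argument forces that Mori fibre space to be $X\to Z$ itself; hence $-K_X$ is $f$-ample, whence $R^1f_{k,*}\mathcal{O}_{X_k}=0$ by relative Kawamata--Viehweg vanishing for excellent surfaces, and a deformation-of-morphisms argument (\autoref{adj-push}, \autoref{invAdj2}) upgrades this to $f_{k,*}\mathcal{O}_{X_k}=\mathcal{O}_{Z_k}$. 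Without some such argument your proof is incomplete at its decisive step.

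Two further points are under-justified. First, your derivation of normality of $X_k$ ("the reduced part of a plt boundary has normal, pairwise disjoint components") is the characteristic-zero adjunction statement; in residue characteristic $p$ this is a deep theorem requiring $p>5$ (normality of threefold plt centres, \autoref{invAdj3} via \cite{bhatt2020}), not a formal consequence of plt-ness. Second, the base-change compatibility $g_{k*}\mathcal{O}_{X_k}=\mathcal{O}_{Z_k}$ is not "immediate" even when $g$ is birational ($\kappa=2$): $Z_k$ need not a priori be normal, and the Stein factorisation of $g_k$ can acquire a nontrivial purely inseparable part --- this is precisely the obstruction behind the known counterexamples to invariance of plurigenera (\autoref{r-connected fibers v contraction}) --- so the paper again has to invoke relative vanishing plus \autoref{adj-push}. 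Finally, you should note that hypothesis (2) is used not merely to make the MMP "restrict to the central fibre" but to rule out flips altogether (\autoref{lemma:MMP_in_fam2}), which is what makes every birational step divisorial and hence harmless for both fibres' plurigenera; and you omit the (easy) case where $K_X+B$ is not pseudo-effective over $R$.
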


\textbf{Acknowledgments.}
The authors thank F.~Bongiorno, P.~Cascini, J.A.~Chen, E.~Floris, S.~Filipazzi and C.D.~Hacon for useful discussions on the content of this article. 
We are grateful to the referee for comments and suggestions, which considerably improved the presentation of the article.	
\begin{itemize}
	\item FB was partially supported by the NSF under grant number DMS-1801851, by a grant from the Simons Foundation; Award Number: 256202 and by the grant $\#$200021/169639 from the Swiss National Science Foundation;
	\item IB was supported by the National Center for Theoretical Sciences and a grant from the Ministry of Science and Technology, grant number MOST-110-2123-M-002-005;
	\item  LS is grateful to the EPSRC for his funding.
\end{itemize}

\section{Preliminaries}
	
\subsection{Notation}\label{notation}
	
\begin{enumerate}
		\item In this article, a base ring $R$ will always denote an excellent domain of finite Krull dimension admitting a dualizing complex $\omega_R^{\bullet}$. We assume $\omega_R^{\bullet}$ is normalised as explained in \cite{bhatt2020}.
		\item Given a scheme $X$, we write $X_\mathbb{Q}:= X
		\times_{\Spec(\mathbb{Z})} \Spec(\mathbb{Q})$.
		\item For an integral scheme $X$, we define the function field $K(X)$ of $X$ to be the localisation $\mathcal{O}_{X,\eta}$ at the generic point $\eta$ of $X$.
		\item We say $(X, \Delta)$ is a \emph{pair} if $X$ is an integral excellent normal scheme with a dualising complex, $\Delta$ is a $\mathbb{K}$-divisor and $K_X+\Delta$ is $\mathbb{K}$-Cartier, where $\mathbb{K}=\mathbb{Q}$ or $\mathbb{R}$.  When not specified we default to $\mathbb{K}=\mathbb{R}$. By $\dim(X)$ we mean the dimension of the scheme $X$. We say $X$ is a curve (resp.~ a surface, a threefold) if $\dim(X)=1$ (resp.~ 2, 3).
		\item If $(X, \Delta)$ is a pair and $\pi \colon Y \to X$ is a projective birational morphism of normal schemes, then we write
		$$K_Y+\pi_*^{-1}\Delta=\pi^*(K_X+\Delta)+\sum_i a(E_i, X, \Delta) E_i,$$
		where $E_i$ are the $\pi$-exceptional prime divisors and $a(E_i, X, \Delta)$ are called the \emph{discrepancies} of $E_i$ (with respect to $(X,\Delta)$).
		\item An open immersion $U \hookrightarrow X$ into a Noetherian scheme is said to be \emph{big} if $\codim_X(X \setminus U) \geq 2$.
		\item For the definitions of the singularities of the MMP in terms of discrepancies (as klt, plt, lc), we refer to \cite{kk-singbook,bhatt2020}.
		\item A \textit{contraction} is a proper morphism $f\colon X\to Y$ between integral schemes (resp.~  algebraic spaces) such that $f_*\cO_X=\cO_Y$.
		\item We say that a rational map of normal integral schemes $\varphi \colon X \dashrightarrow Y$ is a \emph{birational contraction} if it is a proper birational map such that $\varphi^{-1}$ does not contract any divisor. 
% 		\item Given a proper scheme $X$ over $S$, we define a \emph{curve over $S$} to be a closed subscheme $C \subset X$ of dimension 1 such that $C$ is proper over some closed point $s \in S$. 
		\item Let $f\colon Y \to X$ be a contraction of Noetherian algebraic spaces. 
		We say an integral subspace $V \subseteq Y$ is \emph{contracted} by $f$ if $\dim f(V) < \dim V$. 
		\item Let $L$ be a nef line bundle on a scheme $X$ projective over $S$. We define the \emph{exceptional locus} $\mathbb{E}(L)$ of $L$ to be the union of integral subschemes $V$ of $X$ for which $L|_V$ is not big.
		\item If $X \to S$ is a projective morphism and $D$ is a $\mathbb{Q}$-Cartier $\bQ$-divisor on $X$, the \emph{diminished base locus of $D$ over $S$}  is  $$\mathbf{B}_{-}(D/S) = \bigcup_{A \, \mathbb{Q}\text{-divisor} \text{ ample}/S} \mathbf{SB}(D+A/S),$$ where $\mathbf{SB}(D+A/S)$ is the stable base locus of $D+A$ over $S$. If $S$ is clear from the context, we will simply write $\mathbf{B}_{-}(D)$.
		%\item We say that a field $k$ of positive characteristic $p>0$ is $F$-finite if $[k:k^p]< \infty$.
		\item When $R$ is a discrete valuation ring (in short, DVR) with residue field $k$ and fraction field $K$, and $(X,\Delta)$ is a pair defined over $R$, we will denote the special, (resp.~ generic), fibre as $X_{k}$, resp.~ $X_K$. An analogous notation will be used for sheaves and $\mathbb{K}$-divisors on $X$.
\end{enumerate}
	
\subsection{Results from the three-dimensional MMP}
	
In this section we collect results on the three-dimensional MMP proven in \cite{bhatt2020}.
Recall that the existence of projective log resolutions for excellent schemes of dimension at most 3 has been established in \cite{CP19, CJS20} and we will use this fact repeatedly in the article.

We recall some terminology of the MMP we will use.
Let $\pi \colon X \to T$ be a projective morphism of quasi-projective $R$-schems and let $(X,\Delta)$ be a klt pair.
Suppose $\varphi \colon X \rightarrow Z$ is a contraction of a $(K_X+\Delta)$-negative  extremal ray. We say that $\varphi$ is \emph{of fibre type} if $\dim(Z) < \dim(X)$ and in this case we say that $X \to Z$ is a $(K_X+\Delta)$-Mori fibre space. 
If $\varphi$ is birational, we say it is a \emph{divisorial contraction} (resp.~  a \emph{flipping contraction}) if $\text{Exc}(\varphi)$ is purely of codimension 1 (resp.~  has codimension at least 2).
We say that $(X,\Delta)$ is a \emph{minimal model over $T$} if $K_X+\Delta$ is $\pi$-nef. We refer to \cite[Subsection 3.6]{BCHM10} for the definitions of weak log canonical models and $D$-non-positive birational contractions.

\begin{notation}
	In the following, the characteristics of the residue fields of $R$ are different from $2,3$ and $5$. 
\end{notation}

\begin{theorem}[{Base-point-free theorem, \cite[Theorem G]{bhatt2020}}]\label{t-bpf}
	Let $(X,\Delta)$ be a $\mathbb{Q}$-factorial klt threefold pair with $\mathbb{R}$-boundary and let $f \colon X \to T$ be a projective morphism of quasi-projective $R$-schemes.
	Let $L$ be an $f$-nef Cartier divisor such that $L-(K_{X}+\Delta)$ is $f$-big and $f$-nef.
	If the image of $X$ in $T$ is positive-dimensional, then $L$ is $f$-semiample. 
\end{theorem}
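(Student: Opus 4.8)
Although this statement is quoted as \cite[Theorem G]{bhatt2020}, let me outline the strategy one would follow to prove it, since it motivates the constructions below. In characteristic zero one runs the Kawamata--Shokurov argument: Kawamata--Viehweg vanishing yields non-vanishing of $|mL|$ and then lifts sections off the stable base locus, with Noetherian induction controlling the base locus. Kodaira-type vanishing is unavailable over $R$, so the plan is to replace this vanishing-and-lifting step by Keel's theorem in the mixed-characteristic form of Keel--Witaszek \cite{witaszek2020keels}: for a nef Cartier divisor on a scheme projective over $R$, semiampleness can be checked after restricting to the characteristic-zero fibres and to the exceptional locus $\mathbb{E}(L)$. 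The strategy is thus to treat these two pieces separately and glue, inducting on $\dim X$.

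On the characteristic-zero part $X_{\mathbb{Q}}$, the pair $(X_{\mathbb{Q}},\Delta_{\mathbb{Q}})$ is klt over a field of characteristic zero, $L_{\mathbb{Q}}$ is nef, and $L_{\mathbb{Q}}-(K_{X_{\mathbb{Q}}}+\Delta_{\mathbb{Q}})$ is nef and big, so the classical base-point-free theorem shows $L_{\mathbb{Q}}$ is semiample. The hypothesis that $\pi(X)$ is positive-dimensional enters here in an essential way: it forces the fibres of $\pi$ to have dimension at most two, where non-vanishing and abundance are already known, thereby bypassing the difficult three-dimensional non-vanishing in positive characteristic and supplying the sections needed to run the semiampleness argument.

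It remains to prove $L|_{\mathbb{E}(L)}$ is semiample over the positive-characteristic points. If $L$ is big then each component $V$ of $\mathbb{E}(L)$ has $\dim V<\dim X$; using $\mathbb{Q}$-factoriality and the klt hypothesis, adjunction endows the normalisation of $V$ with a klt pair $(V,\Delta_V)$ for which $L|_V-(K_V+\Delta_V)$ is nef and big, so the inductive hypothesis (terminating at the surface case, where semiampleness is known in positive characteristic) applies, and Keel--Witaszek glues the resulting sections. If $L$ is not big, then $\mathbb{E}(L)=X$ and one instead uses the equidimensional EWM contraction $f\colon X\to Z$ with $\dim Z<\dim X$, descends the problem to $Z$, and reduces to the big case there.

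The main obstacle is the adjunction step in positive and mixed characteristic: verifying that the restricted pair $(V,\Delta_V)$ is again klt with the correct positivity, and controlling the different, is far more delicate than over $\mathbb{C}$, and this is where the residue-characteristic bound $p>5$ is genuinely used, since it is required both for the three-dimensional MMP of \cite{bhatt2020} and for the adjunction and vanishing statements it relies on. A secondary difficulty is propagating semiampleness from the characteristic-zero fibre to the positive-characteristic fibres without a uniform vanishing theorem, which is exactly the role played by the equidimensionality of the EWM contraction together with the gluing theorem of \cite{witaszek2020keels}.
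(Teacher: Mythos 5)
This statement is not proved in the paper at all: it is imported verbatim as \cite[Theorem G]{bhatt2020}, and the authors use it as a black box. So there is no proof in the paper to compare yours against, and the relevant question is whether your outline would stand on its own. It would not, for two concrete reasons.

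First, the inductive step on $\mathbb{E}(L)$ is not justified. When $L$ is big, the components $V$ of $\mathbb{E}(L)$ are arbitrary integral subschemes on which $L$ fails to be big; they need not be divisors, need not be normal, and need not be (log canonical or non-klt) centres of any auxiliary pair, so ``adjunction endows the normalisation of $V$ with a klt pair $(V,\Delta_V)$'' is an assertion, not an argument. $\mathbb{Q}$-factoriality gives you no control over the different on such a $V$, and in positive and mixed characteristic even divisorial adjunction onto plt centres requires the normality statements of the type recorded in \autoref{invAdj}, which are themselves deep. The actual proof in \cite{bhatt2020} does not induct on components of $\mathbb{E}(L)$ in this way; it routes through the cone and contraction theorems (so that the relevant loci are fibres of extremal contractions, where one has genuine adjunction and relative Fano structure) before invoking Keel--Witaszek. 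Second, your sketch omits the essential analytic input that replaces Kawamata--Viehweg vanishing: the relative vanishing and lifting statements for $(-K)$-ample contractions proved in \cite{bhatt2020} via big Cohen--Macaulay algebras and the perfectoid results of \cite{And18, MS21, bhatt2020cohenmacaulayness}. Without some substitute for vanishing, neither the characteristic-$zero$-plus-$\mathbb{E}(L)$ gluing nor the descent along the EWM contraction produces any sections, so the argument does not close. (Your description of where the positive-dimensionality of $\pi(X)$ and the bound $p>5$ enter is reasonable, but those remarks do not repair the two gaps above.)
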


\begin{theorem}[{Running the MMP, \cite[Theorem F]{bhatt2020}}]  \label{MMP} 
	Let $(X,\Delta)$ be a $\mathbb{Q}$-factorial dlt threefold pair with $\mathbb{R}$-boundary and let $f \colon X \to T$ be a projective morphism of quasi-projective $R$-schemes with positive-dimensional image. 
	Then we can run a $(K_{X}+\Delta)$-MMP over $T$. 
	If $K_{X}+\Delta$ is pseudo-effective over $T$, then any sequence of steps of a $(K_X+\Delta)$-MMP terminates. 
	Further, a $(K_X+\Delta)$-MMP over $T$ with scaling by an ample divisor $A$ will terminate.
\end{theorem}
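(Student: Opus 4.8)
The plan is to establish the three pillars of the Minimal Model Program in this setting---a Cone and Contraction Theorem, the existence of flips, and termination---and then to combine them. Recall that a single step of a $(K_X+\Delta)$-MMP contracts a $(K_X+\Delta)$-negative extremal ray: if the contraction is of fibre type the program stops, if it is divisorial one replaces $X$ by its image, and if it is flipping one replaces $X$ by the flip. So ``we can run the MMP'' amounts to producing these contractions and flips, while the final two assertions are statements about termination.

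For the Cone and Contraction Theorem I would reduce the contraction of a $(K_X+\Delta)$-negative extremal ray $R$ to a base-point-freeness statement. Choosing an $f$-nef Cartier divisor $L$ with $L^{\perp}\cap\overline{NE}(X/T)=R$ and with $L-(K_X+\Delta)$ both $f$-nef and $f$-big, one applies \autoref{t-bpf} to conclude that $L$ is $f$-semiample; the associated contraction $X\to Z$ is then the desired extremal contraction, and its type (fibre, divisorial, or flipping) is read off from the dimension of its exceptional locus. The hypothesis that the image of $X$ in $T$ is positive-dimensional is precisely what is needed to invoke \autoref{t-bpf}, which is why it appears in the statement; it replaces the vanishing theorems one would use over a field.

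The main obstacle is the existence of flips. For a flipping contraction $g\colon X\to Z$ the flip exists if and only if the relative canonical algebra $\bigoplus_{m\ge 0} g_*\mathcal{O}_X(\lfloor m(K_X+\Delta)\rfloor)$ is a finitely generated $\mathcal{O}_Z$-algebra, a question that is local on $Z$. I would pass to a dlt modification and reduce, following the Hacon--Xu strategy, to the finite generation of a restricted algebra along the flipping locus. Over the characteristic-zero part this is classical; over the mixed- and positive-characteristic points the required vanishing and extension statements are not available from cohomological vanishing and must instead be extracted from the recent commutative-algebra machinery---perfectoid big Cohen--Macaulay algebras and the associated BCM test ideals (\cite{bhatt2020}, building on \cite{And18, MS21, bhatt2020cohenmacaulayness}). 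This is the technical heart of the argument and the step I expect to be genuinely hard.

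It remains to prove termination. A divisorial contraction strictly lowers the relative Picard rank, so only finitely many occur and termination reduces to termination of flips. For an MMP with scaling by an ample divisor $A$, I would combine special termination---after finitely many flips the flipping loci avoid the strata of a fixed dlt modification---with the two-dimensional case and with the finiteness of the possible scaling thresholds guaranteed by the rational polyhedrality in the Cone Theorem; this forces the process to reach an $f$-nef outcome. For an arbitrary sequence with $K_X+\Delta$ pseudo-effective, I would instead run a Shokurov-type difficulty argument special to dimension three: a suitable non-negative integer counting divisors of small discrepancy strictly decreases at each flip, so no infinite sequence is possible.
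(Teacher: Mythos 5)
The paper does not prove this statement: it is imported verbatim as a black box from \cite[Theorem F]{bhatt2020}, so there is no internal argument to compare yours against. Judged on its own terms, your proposal is a reasonable roadmap of the architecture of the cited proof, but it is an outline rather than a proof: the two items that constitute essentially all of the content of the theorem --- the Cone and Contraction Theorem in mixed characteristic and the existence of flips --- are named but not established. For the contraction step you ``choose an $f$-nef Cartier divisor $L$ with $L^{\perp}\cap\overline{NE}(X/T)=R$ and $L-(K_X+\Delta)$ nef and big''; the existence of such a supporting divisor with rational extremal rays \emph{is} the Cone Theorem, which in mixed characteristic cannot be obtained by the usual bend-and-break or Kawamata--Viehweg arguments and is itself a major result of \cite{bhatt2020} (and, as the paper notes before \autoref{neftope}, is only available there in a slightly weaker form). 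Similarly, reducing flips to finite generation of the restricted algebra and invoking ``perfectoid big Cohen--Macaulay algebras'' correctly identifies where the difficulty lives, but no argument is given; this is precisely the content one is citing \cite{bhatt2020} for.

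Two smaller imprecisions: the termination assertion in the statement is for \emph{dlt} pairs with $\mathbb{R}$-boundary, and the Shokurov difficulty-function argument you invoke for arbitrary sequences of flips is tailored to terminal threefolds; extending it to dlt pairs with real coefficients requires special termination together with the rational polytope decomposition of the boundary (cf.\ \autoref{neftope}), which you mention only for the scaling case. Also, the claim that a divisorial contraction ``strictly lowers the relative Picard rank'' presupposes $\mathbb{Q}$-factoriality is preserved and that the relative Picard rank is finite, both of which need justification over a general excellent base. None of this makes your outline wrong, but as submitted it would not stand in for the citation: if the intent is to treat \autoref{MMP} as known, cite \cite[Theorem F]{bhatt2020} as the paper does; if the intent is to prove it, each of the steps above needs to be carried out in full.
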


% We recall some standard consequences of the MMP that we will need.
% \begin{lemma}\label{lemma:extraction}
% 	Let $(X,B)$ be a dlt threefold pair having no residue fields of characteristic $p=2,3$ and $5$. 
% 	Let $E$ be an exceptional divisor  over $X$ with $-1<a(E, X, B)<0$. 
% 	Then there exists a projective birational model $f \colon Y\to X$ such that
% 	\begin{enumerate}	
% 	    \item $(Y,B_Y)$ is a $\bQ$-factorial dlt pair, where $K_Y+B_Y=f^*(K_X+B)$;
% 		\item $E$ is the unique exceptional divisor of $f$;
% 		\item $-E$ is $f$-nef.
% 	\end{enumerate}
% \end{lemma}
	
% \begin{proof}
% 	The same proof of \cite[Corollary 1.39]{kk-singbook} works in this setting thanks to the existence of log resolutions and \autoref{MMP}.
% \end{proof}
	
\begin{corollary}\label{Q-factorial}
	Let $(X,B)$ be a plt threefold pair with $\mathbb{R}$-boundary quasi-projective over $R$. 
	Then there exists a small projective birational morphism $f \colon Y \to X$ such that $Y$ is $\mathbb{Q}$-factorial.
\end{corollary}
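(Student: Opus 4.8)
The plan is to realise $Y$ as the output of a relative MMP started from a log resolution of $(X,B)$, set up so that the only divisors that can be contracted are the $g$-exceptional ones, and then to invoke the negativity lemma to force the resulting model to be small.

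First I would use the existence of projective log resolutions for excellent threefolds (\cite{CP19, CJS20}) to choose a log resolution $g \colon W \to X$ of $(X,B)$, so that $W$ is regular and $\Gamma := g_*^{-1}B + \sum_i E_i$ has simple normal crossings support, where the $E_i$ are the $g$-exceptional prime divisors. Then $(W,\Gamma)$ is a $\mathbb{Q}$-factorial dlt threefold pair, since $W$ is regular and hence locally factorial. Writing $K_W + g_*^{-1}B = g^*(K_X+B) + \sum_i a(E_i,X,B)\,E_i$ and using that $(X,B)$ is plt, so that $a(E_i,X,B) > -1$ for every exceptional $E_i$, I obtain
\[
K_W + \Gamma = g^*(K_X+B) + F, \qquad F := \sum_i \bigl(1 + a(E_i,X,B)\bigr) E_i,
\]
where $F \geq 0$ is effective, $g$-exceptional, and supported on all of $\exc(g)$ in codimension one.

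Next I would run a $(K_W+\Gamma)$-MMP over $X$. Since $K_W + \Gamma \equiv_X F \geq 0$ is effective, it is pseudo-effective over $X$, and as $g$ is birational its image $X$ is positive-dimensional, so \autoref{MMP} guarantees that any such MMP terminates at a model $g' \colon Y \to X$ on which $K_Y + \Gamma_Y$ is nef over $X$, with $Y$ again $\mathbb{Q}$-factorial. The key observation that keeps the construction on track is that every $(K_W+\Gamma)$-negative extremal ray $R$ over $X$ satisfies $F \cdot R < 0$ (because $K_W+\Gamma \equiv_X F$), so any curve generating $R$ lies in $\mathrm{Supp}(F) \subseteq \exc(g)$; consequently each step contracts or flips loci contained in $\exc(g)$ only, the birational transform $g_*^{-1}B$ is never contracted, and $Y \to X$ is a birational contraction. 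Pushing forward, the birational transform $F_Y$ of $F$ satisfies $K_Y + \Gamma_Y \equiv_X F_Y \geq 0$, with $F_Y$ both $g'$-nef and $g'$-exceptional.

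Finally, applying the negativity lemma to $D := -F_Y$ — for which $-D = F_Y$ is $g'$-nef and $g'_* D = 0 \geq 0$ — yields $-F_Y \geq 0$, hence $F_Y = 0$. As each coefficient $1 + a(E_i,X,B)$ is strictly positive, this forces every $E_i$ to have been contracted in the course of the MMP, so $g' \colon Y \to X$ is small and $Y$ is $\mathbb{Q}$-factorial, as required. I expect the substantive inputs to be the termination statement and the confinement of all contractions to $\exc(g)$; both rest on the three-dimensional MMP of \cite{bhatt2020} recorded in \autoref{MMP} together with the negativity lemma in the excellent setting, so once these are in hand the argument is essentially formal.
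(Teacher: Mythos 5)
Your proof is correct and follows essentially the same route as the paper: take a log resolution, run a $(K_W+g_*^{-1}B+\exc(g))$-MMP over $X$ using \autoref{MMP}, and apply the negativity lemma together with the plt hypothesis (all discrepancies $>-1$) to force the exceptional divisors to be contracted, so the resulting model is small and $\mathbb{Q}$-factorial. The only difference is presentational — you track the effective exceptional divisor $F$ through the MMP from the start, whereas the paper introduces it via negativity only at the end — but the substance is identical.
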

	
\begin{proof}
    Let $\varphi \colon W \to X$ be a log resolution of $(X,B)$.
    By \autoref{MMP}, we can run a $(K_W+\varphi_*^{-1}B+\text{Exc}(\varphi))$-MMP over $X$ which ends with a birational contraction $f\colon Y \to X$ such that  $(Y,f_{*}^{-1}B+\text{Exc}(f))$ is a $\mathbb{Q}$-factorial dlt pair with $K_{Y}+f_{*}^{-1}B+\text{Exc}(f)$ nef over $X$. 
    Then by the negativity lemma (\cite[Lemma 2.14]{bhatt2020}) we have $K_{Y}+f_{*}^{-1}B+\text{Exc}(f)+F=f^{*}(K_{X}+B)$, where $F$ is effective and $f$-exceptional. Since $(X,B)$ is plt we have that $\text{Exc}(f)+F=0$, so $f$ is small.
\end{proof}

\subsection{Algebraic spaces}
	
    We refer to \stacks{0ELT} for the definition of algebraic spaces and their general theory. We record here a few key results to be used later. First, Stein factorisation exists for algebraic spaces. We fix an excellent Noetherian base scheme $S$.
	
	\begin{theorem}[Stein factorisation, \stacks{0A1B}]\label{Stein}
	  Let $f \colon X \to Y$ be a proper morphism of Noetherian algebraic spaces over $S$. 
	  Then there is a morphism $f' \colon X \to Y'$, together with a finite morphism $\pi \colon Y' \to Y$, factorising $f$ into $f=\pi \circ f'$  such that
	  \begin{itemize}
	      \item $f'$ is proper and surjective;
	      \item $f'_{*}\ox = \ox[Y']$;
	      \item $Y'=\underline{\textup{Spec}}_{Y}(f_{*}\ox)$;
	      \item and $Y'$ is the normalisation of $Y$ in $X$.
	  \end{itemize}
	  We call $f'$ the Stein factorisation of $f$.
	 \end{theorem}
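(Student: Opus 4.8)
The plan is to construct $Y'$ directly as a relative affine space over $Y$ and then verify each listed property in turn. The essential input is a finiteness theorem: since $f$ is proper and $X$, $Y$ are Noetherian algebraic spaces, the pushforward $f_*\ox$ is a \emph{coherent} sheaf of $\ox[Y]$-algebras. This is the algebraic-space analogue of Grothendieck's coherence theorem for proper morphisms, and it is the one genuinely hard ingredient; everything else is formal. Granting it, I would define $Y' := \underline{\textup{Spec}}_Y(f_*\ox)$, the relative spectrum of this quasi-coherent $\ox[Y]$-algebra, which exists as an algebraic space equipped with an affine structure morphism $\pi \colon Y' \to Y$. Because $f_*\ox$ is finite as an $\ox[Y]$-module, $\pi$ is a finite morphism; in particular it is separated and proper.

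Next I would produce the factorisation. The universal property of $\underline{\textup{Spec}}_Y$, which identifies $Y$-morphisms $X \to Y'$ with $\ox[Y]$-algebra homomorphisms $f_*\ox \to f_*\ox$, applied to the identity homomorphism yields a canonical $Y$-morphism $f' \colon X \to Y'$ with $\pi \circ f' = f$. Properness of $f'$ then follows from the cancellation property: $f = \pi \circ f'$ is proper and $\pi$ is separated, so $f'$ is proper. All of these constructions and the cancellation property are available for algebraic spaces; moreover the formation of $f_*\ox$ and of $\underline{\textup{Spec}}_Y(-)$ commutes with flat base change along an étale cover $Y_0 \to Y$ by a scheme, so each assertion below may also be checked étale-locally on $Y$ if one prefers.

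It remains to check the three remaining bullet points. For $f'_*\ox = \ox[Y']$: since $\pi$ is affine, $\pi_*$ is faithful and exact on quasi-coherent sheaves and reflects isomorphisms, and by construction the map $\pi_*\ox[Y'] \to \pi_* f'_*\ox$ induced by the canonical $\ox[Y']$-algebra map $\ox[Y'] \to f'_*\ox$ is the identity of $f_*\ox = \pi_*\ox[Y']$; hence $\ox[Y'] \to f'_*\ox$ is an isomorphism. For surjectivity of $f'$: its image is closed because $f'$ is proper, so let $U = Y' \setminus f'(X)$ be the open complement. Then $f'^{-1}(U) = \emptyset$, and flat base change along the open immersion $U \hookrightarrow Y'$ gives $\ox[Y']|_U = (f'_*\ox)|_U = (f'|_{f'^{-1}(U)})_*\mathcal{O}_\emptyset = 0$, forcing $U = \emptyset$. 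Thus $f'$ is proper and surjective. Finally, the normalisation of $Y$ in $X$ is by definition $\underline{\textup{Spec}}_Y$ of the integral closure of $\ox[Y]$ inside $f_*\ox$; since $f_*\ox$ is finite, hence integral, over $\ox[Y]$, this integral closure is all of $f_*\ox$, and so $Y' = \underline{\textup{Spec}}_Y(f_*\ox)$ is exactly that normalisation.

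The main obstacle is precisely the finiteness (coherence) of $f_*\ox$ in the algebraic-space setting: unlike the scheme case one cannot reduce it to affines by an open cover, and the standard route goes through Chow's lemma for algebraic spaces together with dévissage to the projective case. Once coherence is in hand, the construction of $Y'$ and the verification of its factorisation properties are routine manipulations with relative Spec, affine morphisms, flat base change, and the cancellation property, all of which transfer verbatim from schemes to Noetherian algebraic spaces.
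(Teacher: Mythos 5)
Your proposal is correct, but note that the paper offers no proof of its own here: it simply cites this statement from the Stacks Project (Tag 0A1B). Your argument is essentially the one given in that reference — define $Y' = \underline{\textup{Spec}}_Y(f_*\ox)$, take coherence of $f_*\ox$ (via Chow's lemma and d\'evissage for proper morphisms of Noetherian algebraic spaces) as the key input, and verify finiteness of $\pi$, properness and surjectivity of $f'$, $f'_*\ox = \ox[Y']$, and the identification with the normalisation of $Y$ in $X$ by the same formal manipulations with relative Spec, affine pushforward, and cancellation that you use.
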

	 
	 In particular if $X$ is normal in \autoref{Stein}, then so is $Y'$. Moreover if $X,Y$ are schemes then $Y'$ is a scheme and $f'$ agrees with the usual notion of Stein factorisation.
	 We also have the following descent result for projective contractions of algebraic spaces.
	
\begin{lemma}\label{as-ext}
Let $f \colon W \to X$ and $g \colon W \to Y$ be projective contractions of Noetherian integral normal algebraic spaces over $S$. 
Suppose that every proper curve $C \subset W$ contracted by $f$ is contracted by $g$. 
Then there is a unique contraction $h\colon X \to Y$ with $g=h \circ f$.
\end{lemma}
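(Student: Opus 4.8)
The plan is to run the classical rigidity argument via the graph of $(f,g)$, adapted to the algebraic-spaces setting. First I would form the morphism $\psi = (f,g)\colon W \to X \times_S Y$; since $f$ is proper and the projection $X\times_S Y \to X$ is separated, $\psi$ is proper. Taking its Stein factorisation (\autoref{Stein}) yields $W \xrightarrow{\phi} Z \xrightarrow{\iota} X\times_S Y$ with $\phi$ a proper surjective contraction, $\iota$ finite, and $Z$ an integral normal algebraic space (normality by the remark following \autoref{Stein}). Write $p = \mathrm{pr}_X\circ\iota\colon Z \to X$ and $q = \mathrm{pr}_Y\circ\iota\colon Z\to Y$, so that $p\circ\phi = f$ and $q\circ\phi = g$. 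Because $\phi_*\ox[W]=\ox[Z]$, pushing forward gives $p_*\ox[Z] = f_*\ox[W] = \ox$ and $q_*\ox[Z]=g_*\ox[W]=\ox[Y]$; in particular both $p$ and $q$ are contractions.

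The heart of the proof is to show that $p$ is an isomorphism. Since $f=p\circ\phi$ is proper and $\phi$ is proper and surjective, $p$ is proper by descent of properness along $\phi$. It then suffices to prove $p$ is quasi-finite, since a proper quasi-finite morphism is finite, and a finite contraction onto a normal space is an isomorphism: indeed $p$ finite gives $Z=\underline{\Spec}_X p_*\ox[Z]=\underline{\Spec}_X\ox=X$. Suppose for contradiction that $p$ has a positive-dimensional fibre; it then contains an integral proper curve $C'\subset Z$ with $p(C')$ a point. Using that $\phi$ is proper and surjective and that $f$ is projective, I would lift $C'$ to an integral proper curve $C\subset W$ with $\phi(C)=C'$, by passing to a component of $\phi^{-1}(C')$ dominating $C'$ and cutting down by general sections of an $f$-ample divisor. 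Then $f(C)=p(C')$ is a point, so $C$ is contracted by $f$; by hypothesis $C$ is contracted by $g$, whence $q(C')=g(C)$ is also a point. Thus $\iota(C')$ is a single point of $X\times_S Y$, contradicting the finiteness (hence quasi-finiteness) of $\iota$. Therefore $p$ is quasi-finite, and the claim follows.

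With $p$ an isomorphism, I set $h := q\circ p^{-1}\colon X\to Y$. Then $h\circ f = q\circ p^{-1}\circ p\circ\phi = q\circ\phi = g$, and $h$ is a contraction since $h_*\ox = q_*\ox[Z] = \ox[Y]$ and $h$ is proper. For uniqueness, note that $f$ is surjective with $f_*\ox[W]=\ox$, hence scheme-theoretically dominant; as $Y$ is separated over $S$, any two morphisms $X\to Y$ agreeing after precomposition with $f$ must coincide, so $h$ is the unique such contraction.

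I expect the main obstacle to be the curve-lifting step in the algebraic-spaces setting: producing an honest integral curve $C\subset W$ mapping onto a prescribed $p$-contracted curve $C'\subset Z$, which requires care with the components of $\phi^{-1}(C')$ and with a Bertini-type cutting argument using the $f$-ampleness coming from projectivity of $f$. A secondary point needing attention is the descent of properness of $p$ along $\phi$, together with the separatedness of the structure morphisms to $S$ used both to form the graph $\psi$ and to deduce uniqueness.
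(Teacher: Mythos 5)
Your proof is correct and follows essentially the same route as the paper: both form the graph morphism to $X\times_S Y$, take its Stein factorisation, use the curve-contraction hypothesis to show the induced map to $X$ is quasi-finite, and conclude by observing that a finite contraction onto a normal space is an isomorphism. The only immaterial differences are that you verify $p_*\cO_Z=\cO_X$ by composing pushforwards directly where the paper argues via the generic fibre and flat base change, and you establish quasi-finiteness by lifting curves from $Z$ to $W$ where the paper simply identifies $\phi^{-1}(\gamma^{-1}(x))$ with $f^{-1}(x)$.
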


\begin{proof}
First, note that any $h\colon X\to Y$ such that $g=h \circ f$ is necessarily a contraction.
Consider $(g\times_S f)\colon W \to X \times_S Y$ and let $\phi \colon W \to \Gamma$ be the contraction part of its Stein factorisation.
Thus $\Gamma$ is an integral, normal algebraic space which is projective over $X$. If $\gamma\colon \Gamma\to X$ is the induced morphism, it is then enough to show that $\gamma$ is an isomorphism.
	    
Let $x\in X$ be any point, and let $F:=\gamma^{-1}(x)$. Then $\phi^{-1}(F)=f^{-1}(x)$ is contracted by $g$, hence by $\phi$, so $\gamma$ is quasi-finite.
	    
Let $\xi\in X$ be the generic point. As $f$ is a contraction, we have $H^0(W_\xi,\cO_{W_\xi})=\kappa(\xi)$. As $\phi$ is a contraction and Stein factorisation commutes with flat base-change, we have that $\phi_\xi\colon W_\xi\to\Gamma_\xi$ is a contraction as well, thus $H^0(\Gamma_\xi,\cO_{\Gamma_\xi})=H^0(\Gamma_\xi,\phi_{\xi,*}\cO_{W_\xi})=\kappa(\xi)$. By \cite[\href{https://stacks.math.columbia.edu/tag/0AYI}{Tag 0AYI}]{stacks-project} we then have that $\gamma$ is a contraction, and by \cite[\href{https://stacks.math.columbia.edu/tag/082I}{Tag 082I}]{stacks-project} we conclude it is an isomorphism.
\end{proof}
	
% \begin{remark}
%     The notion of an integral algebraic space, \cite[Tag 0AD3]{stacks-project}, is somewhat subtle. However we will only ever apply \autoref{as-ext} in the case where $W,X$ are integral schemes, in which case $W,X,Y$ are also integral as algebraic spaces.
% \end{remark}
	
	It will prove useful to know that proper algebraic spaces are schemes on a big open set.
	
	\begin{lemma}\label{l-cod1}
	    Let $X$ be a proper algebraic space over $S$. 
	    Then there is a big open immersion of a scheme $U\to X$. 
	    If $X$ is normal, we can choose $U$ to be regular.
	\end{lemma}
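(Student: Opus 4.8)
The plan is to first extract a dense open subscheme and then enlarge it across the codimension-one points. Since $X$ is proper over the Noetherian scheme $S$ it is quasi-compact and separated, hence decent, so by the standard structure theory of algebraic spaces it contains a dense open subscheme $U_0 \subseteq X$ (see \cite{stacks-project}). It therefore suffices to show that the \emph{non-schematic locus} $N \subseteq X$ -- the complement of the set of points admitting an open scheme neighbourhood -- satisfies $\codim_X N \geq 2$, for then $X \setminus N$ is already a big open subscheme.

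For points of codimension zero this is immediate from the existence of $U_0$, so the real content is to prove that $X$ is a scheme in a neighbourhood of every point $\xi$ of codimension one. First I would record that $N$ is closed (the schematic locus of a Noetherian decent algebraic space is open), so that it is enough to rule out $\xi$ being the generic point of a codimension-one component of $N$.

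To treat such a $\xi$ I would pass to an affine \'etale atlas $(V,v)\to (X,\xi)$, which by decency may be chosen so that $v$ is the unique point lying over $\xi$ and the residual gerbe at $\xi$ is trivial. An \'etale morphism is an open immersion exactly where it is a monomorphism, i.e.~where the diagonal $V \to V\times_X V$ is an isomorphism, so the problem reduces to showing that no points near $v$ are identified by the \'etale equivalence relation $V\times_X V \rightrightarrows V$. This is the main obstacle: it is precisely the self-gluing phenomenon responsible for non-scheme algebraic spaces, and the point is that it cannot occur in codimension one. I would establish this by localising at $\xi$, where the local ring $\cO_{X,\xi}$ is one-dimensional -- a discrete valuation ring in the normal case -- and over such a (henselian) base there is no room for a nontrivial \'etale self-identification, so that after shrinking $V\to X$ becomes a monomorphism, hence an open immersion onto a scheme neighbourhood of $\xi$. (One may alternatively invoke the general fact that points of codimension at most one on a Noetherian decent algebraic space are schematic.)

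Finally, in the normal case I would upgrade the big open subscheme $U_0$ to a regular one. As $U_0$ is normal and of finite type over the excellent base $S$, it is excellent, so its regular locus is open; and since a normal scheme is regular in codimension one, $\mathrm{Sing}(U_0)$ has codimension at least two in $U_0$. Then $U := U_0 \setminus \mathrm{Sing}(U_0)$ is regular, and its complement $X \setminus U = (X\setminus U_0)\cup \mathrm{Sing}(U_0)$ is a union of two sets of codimension $\geq 2$, so $U$ is the desired big regular open subscheme. The hard part throughout is the codimension-one schematic-ness, i.e.~showing that the \'etale atlas becomes a monomorphism near a codimension-one point.
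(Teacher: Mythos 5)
Your proposal is correct and follows essentially the same route as the paper: the crux in both is that codimension-$\leq 1$ points of a quasi-separated (decent) algebraic space are schematic, which the paper simply cites as \stacks{0ADD} (the fact you note "one may alternatively invoke"), and the regular refinement in the normal case is obtained identically by excising the singular locus, which is open by excellence and of codimension $\geq 2$ by normality. The only cosmetic difference is that you take the full schematic locus and bound the codimension of its complement, whereas the paper glues the open scheme neighbourhoods of the codimension-one points.
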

	
	\begin{proof}
	    By \stacks{0ADD}, for each codimension $1$ point $P \in X$ there is an open subspace $U_{P}$ containing $P$ which is a scheme.
	    Take the open subspace   $U=\bigcup_{\codim_X(P)=1} U_{P},$
	    of $X$.
	    By \cite[\href{https://stacks.math.columbia.edu/tag/01JJ}{Tag 01JJ}]{stacks-project} we observe that in fact $U$ is a scheme. Note that $U$ is a sheaf on the Zariski topology since, by definition, it is a sheaf on the finer fppf topology, \stacks{025Y}. 
	    
	    If $X$ is normal, then so too are the $U_{P}$. In particular, after shrinking them as needed we may suppose that each $U_{P}$ is regular and thus that $U$ is regular.
\end{proof}

\subsection{Semiample and EWM line bundles}
	
In this subsection we recall some basic results about semiample and EWM line bundles we will need later on. 
We fix $S$ to be an excellent Noetherian separated base scheme.

\begin{definition}
	Let $\varphi \colon X \to S$ be a proper morphism. A line bundle $L$ on $X$ is said to be \textit{semiample over $S$} if there exists $m>0$ such that $L^{m}$ is globally generated over $S$, \emph{i.e.} the natural morphism $\varphi^*(\varphi_*L^{m}) \to L^m$ is surjective.
\end{definition} 
	
\begin{theorem}\label{t-semiamplecontraction}
Let $X$ be a normal projective scheme over $S$ and let $L$ be a line bundle on $X$. Then the following are equivalent.
\begin{enumerate}
	\item $L$ is semiample over $S$;
	\item there is a contraction $f\colon X\to Z$ over $S$ such that $f$ is induced by the linear system $|L^m/S|$ for all sufficiently divisible $m$;
	\item there is a contraction $f\colon X\to Z$ over $S$ such that $L\sim_{\mathbb{Q}} f^{*}A$ for $A$ ample $\mathbb{Q}$-Cartier $\mathbb{Q}$-divisor on $Z$.
\end{enumerate}
Moreover if there is such a $Z$ then it is normal.
\end{theorem}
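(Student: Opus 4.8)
My plan is to separate the easy directions from the one substantive construction. The implication \textit{(2)}$\Rightarrow$\textit{(1)} is immediate, since the morphism induced by $|L^{m}/S|$ exists exactly when $L^{m}$ is globally generated over $S$. For \textit{(3)}$\Rightarrow$\textit{(1)} I would pick $n>0$ with $nA$ an honest Cartier divisor that is ample and globally generated over $S$; then $L^{n}\sim f^{*}(nA)$ is the pullback of a globally generated bundle, hence globally generated over $S$, so $L$ is semiample over $S$. I would also record at the outset that any contraction $f\colon X\to Z$ out of the normal scheme $X$ has normal target: $f$ equals its own Stein factorisation, so $Z$ is the normalisation of $Z$ in $X$ and is normal by \autoref{Stein}; this settles the final ``moreover''.

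The core of the argument is to produce, from \textit{(1)}, one contraction $f$ realising both \textit{(2)} and \textit{(3)}. Writing $\varphi\colon X\to S$ for the structure morphism, I would fix $d>0$ with $\varphi^{*}\varphi_{*}L^{d}\to L^{d}$ surjective. As $\varphi_{*}L^{d}$ is coherent, this surjection determines an $S$-morphism $g\colon X\to P:=\bP(\varphi_{*}L^{d})$ with $g^{*}\cO_{P}(1)\cong L^{d}$. Taking the Stein factorisation $X\xrightarrow{f}Z\xrightarrow{h}P$ of \autoref{Stein} gives a contraction $f$, a finite morphism $h$, and a normal scheme $Z$ projective over $S$. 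Setting $M:=h^{*}\cO_{P}(1)$ I get $L^{d}\cong f^{*}M$, and $M$ is ample over $S$ because it is the pullback under the finite morphism $h$ of the $S$-ample bundle $\cO_{P}(1)$. Then $A:=\tfrac1d M$ is an ample $\mathbb{Q}$-Cartier $\mathbb{Q}$-divisor with $L\sim_{\mathbb{Q}}f^{*}A$, which is \textit{(3)}.

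To obtain \textit{(2)} I would show this same $f$ is induced by $|L^{m}/S|$ for all sufficiently divisible $m$. For $m=kd$ one has $L^{m}\cong f^{*}M^{k}$, so by the projection formula and $f_{*}\ox=\ox[Z]$,
\[
\varphi_{*}L^{m}\;\cong\;\psi_{*}f_{*}f^{*}M^{k}\;\cong\;\psi_{*}M^{k},
\]
where $\psi\colon Z\to S$ is the structure morphism, and the evaluation $\varphi^{*}\varphi_{*}L^{m}\to L^{m}$ is the $f$-pullback of $\psi^{*}\psi_{*}M^{k}\to M^{k}$. Since $M$ is ample over $S$, I would choose $k_{0}$ with $M^{k_{0}}$ very ample over $S$; then for every $m$ divisible by $dk_{0}$ the bundle $M^{k}$ (with $k=m/d$) is very ample over $S$, so $\psi^{*}\psi_{*}M^{k}\to M^{k}$ is surjective and $Z\hookrightarrow\bP(\psi_{*}M^{k})=\bP(\varphi_{*}L^{m})$ is a closed immersion. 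Hence the morphism $g_{m}$ defined by $|L^{m}/S|$ factors as a closed immersion precomposed with $f$, so its Stein factorisation—the contraction induced by $|L^{m}/S|$—is exactly $f$.

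I expect the only real difficulties to be bookkeeping in the relative setting, not a hard estimate: working with the Grothendieck projectivisation of the merely coherent sheaf $\varphi_{*}L^{d}$, checking that $\cO_{P}(1)$ is $S$-ample and stays $S$-ample after the finite pullback $h$, and—most importantly for \textit{(2)}—identifying $\varphi_{*}L^{m}$ with $\psi_{*}M^{k}$ and recognising $g_{m}$ as (closed immersion)$\,\circ f$, so that the Stein factorisation is $f$ uniformly for all large divisible $m$, which is where relative very ampleness of $M^{k}$ is used to produce the closed immersion.
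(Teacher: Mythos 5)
Your argument is correct and follows essentially the same route as the paper, which simply outsources the substantive direction to the standard semiample fibration theorem (\cite[Theorem 2.1.26]{Laz04}) for (1)$\Rightarrow$(2)$\Rightarrow$(3), notes that (3)$\Rightarrow$(1) is immediate from ampleness, and deduces normality of $Z$ from $f$ being a contraction out of a normal scheme. Your construction via $\bP(\varphi_{*}L^{d})$ and Stein factorisation, plus the identification of the maps given by $|L^{m}/S|$ for $m$ divisible by $dk_{0}$, is a correct self-contained writeup of exactly the argument behind that citation.
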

	
\begin{proof}
	The direction (a) $\Rightarrow$ (b) $\Rightarrow$ (c) is the content of \cite[Theorem 2.1.26]{Laz04}. The implication (c) $\Rightarrow$ (a) follows straight from the definition of ampleness.
	 As $f$ is a contraction, then $Z$ is normal.
\end{proof}
	
The morphism $f$ is the same in both (b) and (c) of \autoref{t-semiamplecontraction}, and it is called the \textit{semiample contraction of $L$}. 

\begin{definition}
	Let $\varphi \colon X \to S$ be a proper morphism of schemes. 
	A nef line bundle $L$ on $X$ is said to be \emph{EWM over $S$} if there exists a proper $S$-morphism $f \colon X \rightarrow Y$ to an algebraic space $Y$ proper over $S$ such that an integral closed subscheme $V \subset X$ is contracted if and only if $L|_V$ is not big. 
\end{definition}
	
By \autoref{Stein}, we can suppose $f$ is a contraction and we call this the \emph{EWM contraction} associated to $L$, which is unique up to isomorphism by \autoref{as-ext}.

The definition of semiample (resp.~ EWM) extends naturally to $\mathbb{Q}$-Cartier $\bQ$-divisors (resp.~ $\mathbb{R}$-Cartier $\bR$-divisors) on proper normal schemes over $S$.
We say that an $\mathbb{R}$-Cartier $\bR$-divisor $D$ is semiample if there exist $r_i>0$ and $L_i$ semiample Cartier divisors such that $D \sim_{\mathbb{R}}\sum_i r_{i}L_{i}$. A natural extension of condition (c) in \autoref{t-semiamplecontraction} is that $D$ is semiample if and only if there is a morphism $f \colon X \to Z$ of $S$-schemes such that $D\sim_{\mathbb{R}} f^*A,$ where $A$ is an ample $\mathbb{R}$-divisor over $S$.

\subsubsection{Semiampleness criteria}

We recall Keel-Witaszek's base-point-free theorem, which will play a crucial role in the proof of abundance.
	
\begin{theorem}[\cite{Keel, witaszek2020keels}]
	Let $L$ be a nef line bundle on a scheme $X$ projective over $S$. Then $L$ is semiample over $S$ if and only if both $L|_{\mathbb{E}(L)}$ and $L|_{X_{\mathbb{Q}}}$ are semiample.
\end{theorem}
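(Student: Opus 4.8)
The forward implication is immediate, so I would dispose of it first: if $L$ is semiample over $S$, choose $m>0$ with $\varphi^*\varphi_*L^m\to L^m$ surjective, and pull this surjection back along the closed immersion $\mathbb{E}(L)\hookrightarrow X$ and along the base change $X_{\mathbb{Q}}\to X$; in both cases global generation of the same power is preserved, so $L|_{\mathbb{E}(L)}$ and $L|_{X_{\mathbb{Q}}}$ are semiample.

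For the converse my plan is to reduce to Keel's positive-characteristic base-point-free theorem fibre by fibre and then glue. Viewing $X$ over $\Spec\mathbb{Z}$, the generic point gives $X_{\mathbb{Q}}$ and the remaining points give fibres of positive residue characteristic. For such a fibre $X_s$ I would observe that $\mathbb{E}(L|_{X_s})\subseteq\mathbb{E}(L)$, since any integral $V\subseteq X_s$ with $L|_V$ not big is also such a subscheme of $X$; hence the hypothesis that $L|_{\mathbb{E}(L)}$ is semiample forces $L|_{\mathbb{E}(L|_{X_s})}$ to be semiample, and Keel's theorem in characteristic $p>0$ (\cite{Keel}) then yields that $L|_{X_s}$ is semiample. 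On $X_{\mathbb{Q}}$ the hypothesis already gives semiampleness. Thus $L$ is semiample on every fibre over $\Spec\mathbb{Z}$.

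The hard part is to upgrade this fibrewise statement to genuine semiampleness of $L$ over $S$, since semiampleness is not \emph{a priori} an open or uniformly bounded condition across the infinitely many residue characteristics. Here I would follow Witaszek (\cite{witaszek2020keels}) and first produce a global EWM contraction $f\colon X\to Z$ to an algebraic space proper over $S$: in positive characteristic Keel's theorem supplies an EWM contraction for the nef bundle $L$, while over $X_{\mathbb{Q}}$ the semiample contraction is in particular EWM, and these are glued and shown to descend to a single $f$ using the existence and descent properties of contractions of algebraic spaces recorded in \autoref{Stein} and \autoref{as-ext}. Writing $L\sim_{\mathbb{Q}}f^*A$ with $A$ relatively ample on the fibres of $Z$ by construction, it then remains to show $A$ is semiample, which by \autoref{t-semiamplecontraction} is equivalent to $L$ being semiample. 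This final descent is exactly where both hypotheses are consumed and where the real difficulty lies: Keel's Frobenius argument is what converts semiampleness on $\mathbb{E}(L)$ into semiampleness on the positive-characteristic part, while the separate assumption on $X_{\mathbb{Q}}$ is indispensable because Frobenius is unavailable in characteristic $0$ and genuine counterexamples (e.g.\ on ruled surfaces over elliptic curves) show that nefness together with semiampleness on $\mathbb{E}(L)$ alone does not force semiampleness there.
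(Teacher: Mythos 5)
First, note that the paper does not prove this statement at all: it is quoted verbatim as a theorem of Keel and Witaszek, so there is no internal argument to compare yours against. Judged on its own terms, your forward implication is fine (surjectivity of $\varphi^*\varphi_*L^m\to L^m$ is preserved under the closed immersion $\mathbb{E}(L)\hookrightarrow X$ and under base change to $X_{\mathbb{Q}}$), and your observation that $\mathbb{E}(L|_{X_s})\subseteq\mathbb{E}(L)$ for a fibre $X_s$ of positive residue characteristic is correct and does let you invoke Keel's theorem fibrewise.

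The converse, however, has two genuine gaps. (i) Semiampleness of $L|_{X_s}$ for every $s\in\Spec\mathbb{Z}$ does not glue to semiampleness of $L$ over $S$: there may be infinitely many positive-characteristic fibres, Keel's theorem gives no uniform bound on the power $m$ that is globally generated on each, and even with a uniform $m$ the relative condition over $S$ is not detected fibrewise over $\Spec\mathbb{Z}$ without a cohomology-and-base-change argument. You flag this as ``the hard part,'' which is right, but the flag is not a proof. (ii) More seriously, the step ``writing $L\sim_{\mathbb{Q}}f^*A$ with $A$ relatively ample \ldots by construction'' is circular. An EWM contraction $f\colon X\to Z$ only records which integral subschemes are contracted; it does not come with a descent of (a power of) $L$ to a line bundle on $Z$. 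If such an $A$ existed, $L$ would already be semiample, so there would be nothing left to show --- the entire content of Keel's theorem in characteristic $p$ is the Frobenius argument that actually produces sections and descends $L$ along $f$, and the entire content of Witaszek's mixed-characteristic extension is the arithmetic gluing that makes this work over $\Spec\mathbb{Z}$. Neither argument is reproduced or even outlined in a way that could be completed, so what you have is a correct statement of the easy direction together with a restatement of why the hard direction is hard, deferred to the very references the theorem is cited from.
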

	
We will need the following descent result for semiampleness on normal schemes (cf. \cite[Lemma 2.1]{FL19}).
	
\begin{lemma}\label{pullback}
	Let $f \colon X \to Y$ be a proper surjective morphism of integral, excellent Noetherian schemes over $S$. Suppose that $Y$ is normal and $L$  is a line bundle on $Y$ such that $f^{*}L$ is semiample over $S$.
	Then $L$ is semiample over $S$.
\end{lemma}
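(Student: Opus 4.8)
The plan is to reduce, via Stein factorisation, to two cases handled by genuinely different arguments: the case where $f$ is a contraction, which is soft, and the case where $f$ is finite, which is the crux. Throughout I would use two formal facts: if a line bundle is semiample over $S$ then so is its pullback under any morphism, and so is its restriction to any closed subscheme. The first lets me replace $X$ by its normalisation (finite, as $X$ is excellent), so we may assume $X$ is normal. Applying Stein factorisation (\autoref{Stein}) to $f$ then gives $f=\pi\circ g$ with $g\colon X\to Y'$ a contraction onto a normal integral scheme $Y'$ and $\pi\colon Y'\to Y$ finite surjective. Setting $L':=\pi^{*}L$ we have $g^{*}L'=f^{*}L$ semiample, so it suffices to prove (A) that $L'$ is semiample on $Y'$ (descent along the contraction $g$), and (B) that $L$ is semiample on $Y$ (descent along the finite morphism $\pi$).

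For (A) I would let $c\colon X\to Z$ be the semiample contraction of $g^{*}L'$ furnished by \autoref{t-semiamplecontraction}, so that $g^{*}L'^{\,m}\cong c^{*}A$ for some $m>0$ and some ample $A$ on $Z$ over $S$. Since $g^{*}L'$ restricts trivially to each fibre of $g$, the morphism $c$ contracts every such fibre, and hence by rigidity (\autoref{as-ext}) factors as $c=\bar c\circ g$ for some $\bar c\colon Y'\to Z$. Because $g$ is a contraction, the projection formula gives $g_{*}g^{*}N\cong N$ for every line bundle $N$ on $Y'$, so $g^{*}$ is injective on Picard groups; applying this to $g^{*}L'^{\,m}\cong g^{*}(\bar c^{*}A)$ yields $L'^{\,m}\cong\bar c^{*}A$, whence $L'$ is semiample.

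For (B) the pullback-injectivity exploited in (A) is unavailable, since $\pi^{*}$ need not be injective on Picard groups, and this is the main obstacle. I would argue by Galois descent: choose a finite Galois extension $M/K(Y)$ containing $K(Y')$ with group $G$, let $\rho\colon\bar Y\to Y$ be the normalisation of $Y$ in $M$, and put $\bar L:=\rho^{*}L$, which factors through $\pi$ and is therefore semiample. The relations $\rho\circ\sigma=\rho$ for $\sigma\in G$ equip $\bar L$ with a canonical $G$-linearisation, so by uniqueness of the semiample contraction the contraction $\bar c\colon\bar Y\to\bar Z$ of $\bar L$ is $G$-equivariant, carrying a $G$-linearised ample bundle $A_{\bar Z}$ with $\bar L^{\,m}\cong\bar c^{*}A_{\bar Z}$. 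Forming $Z:=\bar Z/G$, descending a suitable power of $A_{\bar Z}$ to an ample bundle on $Z$, and using $\bar Y/G=Y$, the equivariant contraction descends to $c_{Y}\colon Y\to Z$, exhibiting $L$ as semiample.

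The delicate point is precisely this finite descent: in mixed and positive characteristic $K(Y')/K(Y)$ may be inseparable, so there is no trace projector splitting $\cO_{Y}\hookrightarrow\pi_{*}\cO_{Y'}$ with which to average sections, and one must instead verify that the quotient $\bar Z/G$ is a scheme and that the $G$-linearised ample bundle genuinely descends (replacing it by a power to kill stabiliser characters). An alternative route that sidesteps quotients is to combine Keel--Witaszek's theorem (\cite{Keel, witaszek2020keels}) with induction on $\dim Y$: one checks that $L$ is nef, since curves in $Y$ lift to curves in $Y'$; the characteristic-zero part $L|_{Y_{\bQ}}$ descends by the classical trace argument, which is available there because everything is separable over $\bQ$; and the exceptional locus $\mathbb{E}(L)$ is treated by the inductive hypothesis after normalising its preimage in $Y'$, with the non-big case reduced separately through the lower-dimensional target of the semiample contraction of $L'$.
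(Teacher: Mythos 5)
Your reduction of the problem via Stein factorisation, and your treatment of the contraction part (A), are sound and essentially match the paper: the paper also first descends along the contraction, using the projection formula $H^0(X,f^*L^{m})=H^0(Z,\psi^*L^{m})$ to see that $\psi^*L^{m}$ is base-point-free; your variant via rigidity (\autoref{as-ext}) and injectivity of $g^*$ on Picard groups is a correct rephrasing of the same mechanism.

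The gap is in the finite descent step (B). You propose to ``choose a finite Galois extension $M/K(Y)$ containing $K(Y')$'', but when $K(Y')/K(Y)$ is inseparable --- the very case you flag as delicate --- no such $M$ exists: a Galois extension is by definition separable, so it cannot contain an inseparable subextension. What one can do is take the normal closure $M$ and set $G=\Aut(M/K(Y))$; but then $M^{G}$ is a nontrivial purely inseparable extension of $K(Y)$, so $\bar Y/G$ is not $Y$ but a finite purely inseparable cover of it. Your equivariant descent therefore only establishes semiampleness on that cover, and the remaining descent along a purely inseparable finite morphism --- which your Galois machinery cannot see --- is left unaddressed; it needs an extra ingredient such as \cite[Proposition 3.5]{witaszek2020keels} or \cite[Lemma 1.4]{Keel}. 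The paper avoids the entire issue with a single uniform device: by \stacks{0BD3}, a finite surjective morphism onto a normal Noetherian scheme admits a norm $\Norm_f\colon f_*\cO_X\to\cO_Y$ of degree $d=\deg f$, inducing $\Norm_f\colon \mathrm{Pic}(X)\to\mathrm{Pic}(Y)$; applying it to a section of $f^*L^{m}$ not vanishing on $f^{-1}(y)$ produces a section of $L^{md}$ not vanishing at $y$. This works verbatim for inseparable extensions and replaces both the Galois quotient and the trace argument. Your sketched alternative via Keel--Witaszek and induction on $\dim Y$ is also not self-contained: descending semiampleness from the normalised preimage of $\mathbb{E}(L)$ back to $\mathbb{E}(L)$ is again descent along a finite surjective morphism, i.e.\ an instance of the statement being proved.
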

	
\begin{proof}
The proof is similar to \cite[Lemma 2.10]{Keel}.	
We may freely assume that $X$ is normal. 
Let $X \xrightarrow{\varphi} Z \xrightarrow{\psi} Y$ be the Stein factorisation of $f$, where $\varphi$ is a contraction and $\psi$ is a finite map. 
We first show that $\psi^*L$ is semiample. Take $m>0$ such that $f^*L^{ m}$ is base point free. By the projection formula $H^0(X, f^*L^{m})=H^0(Z, \psi^*L^{m})$ and so $\psi^*L^{m}$ is base-point-free.
		
We can thus assume that $f$ is a finite morphism of degree $d$.
By \stacks{0BD3}, there exists a norm function $\Norm_f \colon f_*\mathcal{O}_X \to \mathcal{O}_Y$ of degree $d$ for $f$ which induces a group homomorphim $\Norm_f \colon \text{Pic}(X) \to \text{Pic}(Y)$ by \stacks{0BCY}. Take $m>0$ such that $f^*L^{m}$ induces the semiample contraction, and let $y\in Y$ be a point. Then there is a section $s\colon \cO_X\to f^*L^{m}$ not vanishing at any of the points in $f^{-1}(y)$. By \stacks{0BCY} and \stacks{0BCZ} we then construct a section $\Norm_f(s)\colon \cO_{Y}\to L^{md}$ not vanishing at $y$, concluding.
\end{proof}
		
		\iffalse
		By passing to a further finite cover, we can assume the field extension $K(Y) \subset K(X)$ is normal and let $G:=\Aut(K(X)/K(Y))$.
		Take $Y'$ to be the integral closure of $Y$ inside $K(X)^G$ and we have the following factorisation 
		$$f \colon X \xrightarrow{g} Y' \xrightarrow{h} Y,$$ 
		where $g$ is Galois while $h$ is purely inseparable. Since $h$ is finite (in particular affine) and $Y$ is normal, we conclude that $h$ is a universal homeomorphism by \autoref{affine-uni-hom}. 
		
		We first prove that $M:=h^*L$ is semiample, knowing that $g^*M$ is semiample. Let $d$ be the degree of $g$ so that, by \cite[Lemma 31.17.7]{stacks-project}, there exists $\Norm_g$, a norm of degree $d$ for $g$. Let now $m>0$ such that $g^*M^{m}$ induces the semiample contraction, let $y\in Y'$ any point, and let
		\[
		s\colon \cO_X\to g^*M^{m}
		\]
		be a section not vanishing at any of the points in $g^{-1}(x)$. By \stacks{0BCY} and \stacks{0BCZ} we then get a section
		\[
		\Norm_g(s)\colon \cO_{Y'}\to M^{md}
		\]
		not vanishing at $y$.

	 	If $h$ is not trivial then both $K(Y)$ and $K(Y')$ have positive characteristic and $h$ is a universal homeomorphism. Since $M=h^{*}L$ is semiample, then $L$ is semiample by \cite[Proposition 3.5]{witaszek2020keels}\footnote{\myworries{Can also refer to \cite[Lemma 1.4]{Keel}, maybe a bit better since no reference to perfection and such - could also use 2.22 in relative semiampleness}} as $X_{\mathbb{Q}}=Y_{\mathbb{Q}}=\emptyset$.
	 	\fi

	We will need a similar, but slightly weaker result for algebraic spaces. First we make the following observation.

	\begin{lemma}\label{com-big}
	    Let $f \colon Y \to X$ be a contraction of integral normal proper $S$-schemes. Let $L$ be a line bundle on $X$ nef over $S$. Let $V \subset X$ (resp. $V' \subset Y$) be an integral closed subscheme.
	    Suppose $f(V')=V$. Then $f^{*}L|_{V'}$ is big over $S$ if and only if $L|_V$ is big over $S$ and $\dim (V)=\dim (V')$.  
	\end{lemma}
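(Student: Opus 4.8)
The plan is to reduce the statement to a computation with intersection numbers on a variety over a field. Note first that neither the normality of $X,Y$ nor the fact that $f$ is a contraction will be used: all that matters is that $g := f|_{V'}\colon V'\to V$ is a proper surjective morphism of integral schemes (being the restriction of the proper morphism $f$ to the closed subscheme $V'$, with image $V$), and that $M := L|_V$ is nef over $S$, so that $f^*L|_{V'} = g^*M$ is nef as well.

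First I would reduce to the case where $S = \Spec\kappa$ for a field $\kappa$. Let $\eta'$ and $\eta = g(\eta')$ be the generic points of $V'$ and $V$; since $g$ is dominant they map to a common point $s \in S$. Replacing $S$ by the integral subscheme $\overline{\{s\}}$ does not affect bigness over $S$, so I may assume $S$ is integral with generic point $s$ and that $V, V'$ dominate $S$. Base-changing to the generic fibre over $\kappa(s)$, and using that the generic fibre of a dominant morphism of integral schemes is again integral with the same function field, that $g$ remains proper and surjective, and that bigness over $S$ is detected on this generic fibre, reduces us to $S = \Spec\kappa(s)$. The excellence (hence universal catenarity) of the schemes gives the dimension formula $\dim V = \dim \overline{\{s\}} + \dim V_{\kappa(s)}$, and likewise for $V'$, so that the condition $\dim V = \dim V'$ is equivalent to its analogue on the generic fibre. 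By Chow's lemma (and since in the applications $V, V'$ are anyway projective over $\kappa$) I may further assume $V, V'$ are projective over $\kappa$.

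With $S = \Spec\kappa$, set $n' := \dim V'$. Since $g^*M$ is nef, it is big if and only if its top self-intersection $\big((g^*M)^{n'}\big)$ is positive (see e.g.\ \cite{Laz04}). By the projection formula,
\[
\big((g^*M)^{n'} \cdot [V']\big) = \big(M^{n'} \cdot g_*[V']\big),
\]
where $g_*[V'] = \deg(g)\,[V]$ with $\deg(g) = [K(V'):K(V)] \geq 1$ when $\dim V = \dim V'$, while $g_*[V'] = 0$ when $\dim V < \dim V'$. If $\dim V < \dim V'$ the right-hand side vanishes, so $g^*M$ is not big, in agreement with the failure of the right-hand condition. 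If $\dim V = \dim V' =: n$, the identity reads $\big((g^*M)^{n}\big) = \deg(g)\,(M^{n})$, which is positive exactly when $(M^n) > 0$, i.e.\ exactly when the nef divisor $M$ is big. This proves $f^*L|_{V'}$ is big if and only if $L|_V$ is big and $\dim V = \dim V'$.

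The main obstacle is not the final computation but the relative bookkeeping: pinning down ``big over $S$'' so that it is genuinely detected on the generic fibre over the image point $s$, and justifying the reduction to a field while keeping the schemes integral, the morphism $g$ proper and surjective, and the equivalence $\dim V = \dim V' \Leftrightarrow \dim V_{\kappa(s)} = \dim V'_{\kappa(s)}$ intact. For the latter the excellence of the base, supplying the universal-catenary dimension formula, is essential; once over a field the argument is the short projection-formula computation above.
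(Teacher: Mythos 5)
Your proof is correct and follows essentially the same route as the paper's: the numerical criterion that a nef line bundle is big exactly when its top self-intersection is positive, combined with the projection formula $\bigl((f^*L)^{\dim V'}\cdot V'\bigr)=\bigl(L^{\dim V'}\cdot f_*V'\bigr)$, which vanishes when $\dim V<\dim V'$ and equals $\deg(V'/V)\,\bigl(L^{\dim V}\cdot V\bigr)$ otherwise. The only difference is that you spell out the reduction of relative bigness over $S$ to the geometric statement over the residue field of the generic point of the image, which the paper leaves implicit by working directly with relative intersection numbers.
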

	
	\begin{proof}
	    
	   Let $d$ be the dimension of $V'$. Since $f^{*}L$ is nef, it is big on $V'$ if and only if $(f^{*}L)^{d} \cdot V'>0$.
	    Hence by the projection formula (\cite[Proposition VI.2.11]{k-rat-curves}) it is big on $V'$ if and only if $L^{d} \cdot V> 0$. In turn this occurs if and only if $\dim (V) =d$ and $L$ is big on $V$.
	  \end{proof}
	
	\begin{lemma}\label{pp-EWM}
	Let $f \colon Y\to X$ be a contraction of integral normal projective $S$-schemes.
	A line bundle $L$ on $X$ is EWM if and only if $f^{*}L$ is EWM.	
	\end{lemma}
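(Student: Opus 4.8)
The plan is to use \autoref{com-big} as a dictionary between bigness of $L$ on subvarieties of $X$ and bigness of $f^{*}L$ on subvarieties of $Y$, and then to transport the defining contraction across $f$ in each direction. First I would record that the nef hypothesis built into the notion of EWM passes freely through $f$: since $f$ is a surjective proper morphism of integral schemes, every integral curve $C\subset X$ is the image of some integral curve $C'\subset Y$, and the projection formula gives $f^{*}L\cdot C'=(\deg C'/C)(L\cdot C)$ with $\deg(C'/C)>0$; together with the fact that pullbacks of nef divisors are nef, this shows that $L$ is nef if and only if $f^{*}L$ is. Hence in either direction we may assume the relevant divisor is nef.

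For the forward implication, suppose $L$ is EWM with EWM contraction $g\colon X\to Z$, where $Z$ is a normal algebraic space proper over $S$. I claim that $g\circ f\colon Y\to Z$ already witnesses that $f^{*}L$ is EWM (one then replaces it by the contraction part of its Stein factorisation via \autoref{Stein} if a genuine contraction is desired). Indeed, let $V'\subseteq Y$ be integral and closed and put $V:=f(V')$; since $\dim g(V)\le\dim V\le\dim V'$, the subvariety $V'$ is contracted by $g\circ f$ precisely when these three dimensions are not all equal. By the EWM property of $g$ we have that $L|_V$ is big if and only if $\dim g(V)=\dim V$, and by \autoref{com-big} we have that $f^{*}L|_{V'}$ is big if and only if $L|_V$ is big and $\dim V=\dim V'$. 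Combining these, $f^{*}L|_{V'}$ is big exactly when $V'$ is not contracted by $g\circ f$, which is what we wanted.

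For the reverse implication, suppose $f^{*}L$ is EWM with EWM contraction $h\colon Y\to W$. I would first check that every proper curve $C\subset Y$ contracted by $f$ is contracted by $h$: if $f(C)$ is a point then $\dim f(C)=0\neq1=\dim C$, so \autoref{com-big} shows $f^{*}L|_C$ is not big, i.e. $f^{*}L\cdot C=0$ since $f^{*}L$ is nef, whence $C$ is contracted by $h$. This is exactly the hypothesis of \autoref{as-ext}, so there is a contraction $g\colon X\to W$ with $h=g\circ f$. To finish I would verify the EWM property of $g$: given an integral closed $V\subseteq X$, choose (using surjectivity of $f$ together with the existence of a closed point with finite residue extension in the fibre of $f$ over the generic point of $V$) an integral closed $V'\subseteq Y$ with $f(V')=V$ and $\dim V'=\dim V$. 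Then $g(V)=h(V')$, and since $\dim V=\dim V'$ the subvariety $V$ is contracted by $g$ if and only if $V'$ is contracted by $h$, if and only if $f^{*}L|_{V'}$ is not big, if and only if (again by \autoref{com-big}) $L|_V$ is not big. Thus $g$ is the EWM contraction of $L$.

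The step I expect to be the main obstacle is the descent of $h$ through $f$ in the reverse direction. \autoref{as-ext} is stated for projective contractions, whereas the EWM contraction $h\colon Y\to W$ is only proper, its target being a genuine algebraic space rather than a projective scheme. I would address this by observing that the proof of \autoref{as-ext} — forming the contraction part $\Gamma$ of the Stein factorisation of $(f,h)\colon Y\to X\times_S W$ and showing that the induced map $\Gamma\to X$ is a quasi-finite contraction, hence an isomorphism by Zariski's main theorem — uses only properness of the two maps and not projectivity, so that it applies verbatim in the present setting. The remaining verifications (the nef transfer, the dimension bookkeeping, and the lifting of $V$ to an equidimensional $V'$) are routine.
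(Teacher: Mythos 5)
Your proof is correct and follows essentially the same route as the paper's: \autoref{com-big} is used to transfer bigness of restrictions across $f$, the forward direction composes $f$ with the EWM contraction of $L$, and the reverse direction descends the EWM contraction of $f^{*}L$ through $f$ via \autoref{as-ext} after lifting each subvariety of $X$ to an equidimensional one in $Y$. Your additional remarks (that nefness passes through $f$, and that \autoref{as-ext} is invoked for the merely proper EWM contraction) simply make explicit points the paper leaves implicit.
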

	
\begin{proof}
  Suppose first that $L$ is EWM and let $g \colon X \to Z$ be the associated EWM contraction. 	
  We claim that $h=g \circ f$ contracts an integral subscheme $V$ of $Y$ if and only if $f^{*}L|_{V}$ is not big. 	
  By \autoref{com-big}, $f^{*}L|_{V}$ is not big if and only $\dim (f(V))< \dim (V)$ or $L|_{f(V)}$ is not big, showing that $h$ is the EWM contraction associated to $f^*L$. %Therefore $V$ is contracted by $h$ if and only $f^{*}L|_{V}$ is not big.
	  
  Now suppose that $f^{*}L$ is EWM. Let $g \colon Y \to Z$ be the associated EWM contraction. 
  By \autoref{as-ext} there exists a contraction $h \colon X \to Z$ with $g=h\circ f$. We are only left to show that $h$ is the EWM contraction associated to $L$.
  
  Let $V\subset X$ be an integral subscheme of dimension $d$. We can choose an integral $V'$ lying over $V$ of dimension $d$ by cutting $f^{-1}(V)$ with hyperplanes and taking a dominant component. 
  By \autoref{com-big} we see that
  $L|_V$ is not big if and only if $f^*L|_{V'}$ is not big. By hypothesis $f^*L|_{V'}$ is not big if and only if $V'$ is contracted. This is equivalent to $V$ being contracted, which concludes the proof.
\end{proof}

    \begin{remark}
        Clearly, if $L$ is an EWM line bundle on $X$ and $T$ is any integral closed subscheme, then $L|_{T}$ is EWM.    
    \end{remark}

% 	\textcolor{red}{do we need it anymore?}
	
% 	\begin{lemma}\label{affine-uni-hom}
% 		Let $R$ be a Noetherian domain.
% 		Let $F$ be a field containing $R$ and suppose that $R$ is integrally closed in $F$. Suppose that $L/F$ is a purely inseparable finite field extension and let $S$ be the integral closure of $R$ in $L$. Then the induced morphism $\Spec(S) \to \Spec(R)$ is a universal homeomorphism.
% 	\end{lemma}
	
% 	\begin{proof}
% 		Let $p$ be the characteristic of $F$. If $p=0$ then $L=F$ and the result is trivial as $R$ is integrally closed in $F$.
% 		Suppose that $p>0$ and as $L/F$ is a purely inseparable extension of finite degree there exists $n>0$ such that $L^{p^n }\subset F$.
		
% 		We now show that $S^{p^n} \subset R$. 
% 		Choose $x \in S$ and consider a monic polynomial $P=T^m+a_1T^{m-1}+ \dots +a_m \in R[T]$ such that $P(x)=0$.
% 		Note that $0=P(x)^{p^n}=Q(x^{p^n})$ for the monic polynomial $Q=T^m+a_1^{p^n}T^{m-1}+ \dots +a_m^{p^n} \in R[T]$ and moreover $x^{p^n} \in F$, concluding that $x^{p^n} \in R$ as $R$ is integrally closed. 
% 		Hence $R \to S$ induces a universal homeomorphism on the relative spectra by \cite[\href{https://stacks.math\cdot Columbia.edu/tag/0BRA}{Tag 0BRA}]{stacks-project}. 
% 	\end{proof}

	\subsubsection{Semiample line bundles over DVRs}
	We study how the spaces of global sections of $L$ behave in a family of normal projective varieties over a DVR $R$.
	
	Given a $\mathbb{Q}$-Cartier $\bQ$-divisor $L$ on a normal variety $X$ over a field $k$, we denote by $\kappa(L)$ its Iitaka dimension (see \cite[Definition 2.1.3]{Laz04}).
	\begin{lemma}\label{lemma_DIOK}
		Let $R$ be a DVR and let $\pi \colon X \to \Spec(R)$ be a normal integral projective $R$-scheme such that $X_k$ is normal.
		If $L$ is a  semiample $\bQ$-Cartier $\bQ$-divisor on $X$, then $\kappa(L_k)=\kappa(L_K)$.
	\end{lemma}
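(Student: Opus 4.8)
The plan is to pass to the semiample contraction of $L$ and reduce the statement to an equality of fibre dimensions over the DVR. By \autoref{t-semiamplecontraction} applied to $\pi\colon X\to \Spec(R)$, semiampleness of $L$ yields a contraction $f\colon X\to Z$ over $R$ with $Z$ normal such that $L\sim_{\mathbb{Q}} f^{*}A$ for an ample $\mathbb{Q}$-Cartier divisor $A$; in fact, taking $f$ to be induced by $|mL/\Spec(R)|$ and $Z\hookrightarrow \mathbb{P}(\pi_*\cO_X(mL))$ for suitably divisible $m$, I may arrange a genuine isomorphism $\cO_X(mL)\cong f^{*}A_0$ with $A_0=\cO_Z(1)$ relatively very ample. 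Since $f$ is a contraction and $X$ is integral and dominates $\Spec(R)$, the image $Z$ is integral, projective over $R$, and dominates $\Spec(R)$ as well.

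Next I would compute both Iitaka dimensions as fibre dimensions of $Z$. Restricting the isomorphism $\cO_X(mL)\cong f^{*}A_0$ to the special and generic fibres gives $\cO_{X_k}(mL_k)\cong f_k^{*}(A_0|_{Z_k})$ and $\cO_{X_K}(mL_K)\cong f_K^{*}(A_0|_{Z_K})$, where $f_k\colon X_k\to Z_k$ and $f_K\colon X_K\to Z_K$ are the induced morphisms; here $A_0|_{Z_k}$ and $A_0|_{Z_K}$ are ample, being restrictions of a relatively ample line bundle to fibres. As $f$ is surjective, both $f_k$ and $f_K$ surject onto the corresponding fibre of $Z$. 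Taking the Stein factorisation $X_k\xrightarrow{g}Z_k'\xrightarrow{h}Z_k$ (and similarly over $K$), the projection formula together with $g_*\cO_{X_k}=\cO_{Z_k'}$ gives $H^{0}(X_k, nmL_k)=H^{0}(Z_k', h^{*}(A_0|_{Z_k})^{n})$, and since $h^{*}(A_0|_{Z_k})$ is ample on $Z_k'$ this dimension grows like $n^{\dim Z_k'}=n^{\dim Z_k}$. Hence $\kappa(L_k)=\dim Z_k$, and likewise $\kappa(L_K)=\dim Z_K$. The hypothesis that $X_k$ is normal is used here to ensure $X_k$ is reduced so that $\kappa(L_k)$ is defined in the intended sense.

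Finally I would show $\dim Z_k=\dim Z_K$. Since $Z$ is integral and dominates the DVR $\Spec(R)$, its structure sheaf is torsion-free over $R$ and hence flat, so $Z\to\Spec(R)$ is flat and (being proper and dominant) surjective, so $Z_k\neq\emptyset$. The special fibre $Z_k$ is then the vanishing locus of a uniformiser, i.e. an effective Cartier divisor on the integral scheme $Z$; by Krull's principal ideal theorem it is of pure codimension one, whence $\dim Z_k=\dim Z-1$. On the other hand $\dim Z_K=\dim Z-1$ by the dimension formula for the integral scheme $Z$ dominating the one-dimensional base. Combining the three displays yields $\kappa(L_k)=\dim Z_k=\dim Z-1=\dim Z_K=\kappa(L_K)$.

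The two delicate points, and where I expect the real work to lie, are: first, that the fibrewise restriction of the global semiample contraction still computes the Iitaka dimension on each fibre — this is exactly where ampleness of $A_0$ on $Z_k$ and $Z_K$ and the passage to Stein factorisations are needed, since the fibres of $f_k$ need not be connected; and second, the equality $\dim Z_k=\dim Z_K$, which rests on the flatness of $Z/R$ and the fact that the special fibre is a Cartier divisor on $Z$. I anticipate the former to be the main obstacle requiring care.
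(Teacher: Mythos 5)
Your proposal is correct and follows essentially the same route as the paper: pass to the semiample contraction $f\colon X\to Z$, identify $\kappa(L_t)$ with $\dim Z_t$ via the projection formula (the paper applies asymptotic Riemann--Roch to $f_{t,*}\cO_{X_t}\otimes\cO_{Z_t}(mA_t)$ on $Z_t$ where you route through the Stein factorisation, a cosmetic difference), and conclude by flatness of the integral scheme $Z$ over the DVR, which makes $\delta\colon Z\to\Spec(R)$ equi-dimensional.
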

	
	\begin{proof}
		Let $f\colon X \to Z$ be the semiample contraction of $L$ over $R$, let $\delta\colon Z\to\Spec(R)$ be the structure morphism.
		As $Z$ is integral and $\delta$ is dominant, then it is flat by \cite[Proposition 9.7]{Ha77} and thus equi-dimensional. Let $d$ be the dimension of the fibers of $\delta$, and let $A$ be an ample $\bQ$-divisor on $Z$ such that $L\sim_{\bQ}f^\ast A$. By the projection formula (\cite[\href{https://stacks.math.columbia.edu/tag/01E8}{Tag 01E8}]{stacks-project}) and asymptotic Riemann-Roch (\cite[Theorem VI.2.15]{k-rat-curves}), for each $t \in \Spec(R)$ we have
		\begin{equation*}
			\begin{split}
				h^0(X_t,mL_t)&=h^0(Z_t,f_{t,*} \cO_{X_t}\otimes\cO_{Z_t}(mA_t))\\
				&=\mathrm{rk} (f_{t,*} \cO_{X_t})\frac{(mA_t)^d}{d!}+O(m^{d-1})
			\end{split}
		\end{equation*}
		for all $m> 0$ sufficiently divisible. Thus we conclude $\kappa(L_t)=d$ for each $t \in \Spec(R)$.
	\end{proof}

	\begin{lemma}\label{l-stein-invariance}
		Let $R$ be a DVR and let $\pi \colon X \to \Spec(R)$ be a normal integral projective $R$-scheme such that $X_k$ is normal. 
		Let $L$ be a $\bQ$-Cartier $\bQ$-divisor on $X$, semiample over $R$ and let $f \colon X \to Z$ be the semiample contraction induced by $L$.
		Then the following are equivalent:
		\begin{enumerate}
			\item[(1)] $f_{k,*} \cO_{X_k} = \cO_{Z_k}$;
			\item[(2)] $h^0(X_k, mL_k)=h^0(X_K, mL_K)$ for all $m\geq 0$ sufficiently divisible.
		\end{enumerate}
	\end{lemma}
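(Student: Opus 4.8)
The plan is to push everything through the semiample contraction $f\colon X\to Z$ and to isolate the difference between conditions (1) and (2) in a single coherent sheaf supported on the special fibre. By \autoref{t-semiamplecontraction} we may write $L\sim_{\mathbb{Q}}f^{*}A$ for an ample $\mathbb{Q}$-Cartier $\mathbb{Q}$-divisor $A$ on the normal scheme $Z$, and exactly as in the proof of \autoref{lemma_DIOK} the structure morphism $\delta\colon Z\to\Spec(R)$ is flat (equivalently $\mathcal{O}_Z$ is torsion-free over the DVR $R$); likewise $X$ is flat over $R$. After replacing $m$ by sufficiently divisible multiples, so that $mA$ is Cartier and $A_t$ is ample on the fibre $Z_t$, the projection formula gives
\[
h^0(X_t,mL_t)=h^0\!\big(Z_t,\,f_{t,*}\mathcal{O}_{X_t}\otimes\mathcal{O}_{Z_t}(mA_t)\big),\qquad t\in\{k,K\}.
\]

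First I would treat the generic fibre. Since $f$ is a contraction and Stein factorisation commutes with flat base change (as used in \autoref{as-ext}), we have $f_{K,*}\mathcal{O}_{X_K}=\mathcal{O}_{Z_K}$, so $h^0(X_K,mL_K)=h^0(Z_K,mA_K)$. Because $A$ is $\delta$-ample and $\mathcal{O}_Z(mA)$ is flat over $R$, relative Serre vanishing together with cohomology-and-base-change show that, for $m$ large and divisible, $\delta_*\mathcal{O}_Z(mA)$ is locally free and its formation commutes with base change; as $R$ is local this sheaf is free of constant rank, whence $h^0(Z_K,mA_K)=h^0(Z_k,mA_k)$. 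Thus the generic count is computed by $mA$ on $Z_k$.

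The crux is to compare $f_{k,*}\mathcal{O}_{X_k}$ with $\mathcal{O}_{Z_k}$. Pushing the Koszul sequence $0\to\mathcal{O}_X\xrightarrow{t}\mathcal{O}_X\to\mathcal{O}_{X_k}\to0$ (with $t$ a uniformiser of $R$) forward along $f$ and using $f_*\mathcal{O}_X=\mathcal{O}_Z$ produces a short exact sequence
\[
0\to\mathcal{O}_{Z_k}\to f_{k,*}\mathcal{O}_{X_k}\to\mathcal{Q}\to0,\qquad \mathcal{Q}:=\big(R^1f_*\mathcal{O}_X\big)[t],
\]
where $\mathcal{Q}$ is the $t$-torsion subsheaf of $R^1f_*\mathcal{O}_X$, a coherent sheaf supported on $Z_k$ whose vanishing is precisely condition (1). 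Twisting by the $\delta$-ample $\mathcal{O}(mA_k)$ and invoking $H^1(Z_k,\mathcal{O}_{Z_k}(mA_k))=0$ for $m\gg0$ (Serre vanishing on the ample $A_k$), I obtain
\[
h^0(X_k,mL_k)-h^0(X_K,mL_K)=h^0\!\big(Z_k,\mathcal{Q}(mA_k)\big)
\]
for all sufficiently divisible $m$. The equivalence then drops out: if (1) holds then $\mathcal{Q}=0$ and the right-hand side vanishes, giving (2); conversely, if $\mathcal{Q}\neq0$, ampleness of $A_k$ forces $h^0(Z_k,\mathcal{Q}(mA_k))\neq0$ for all $m\gg0$, so (2) fails.

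I expect the main obstacle to be verifying the two base-change inputs cleanly, namely that $f_{K,*}\mathcal{O}_{X_K}=\mathcal{O}_{Z_K}$ (flat base change of the contraction) and that $h^0(Z_t,mA_t)$ is independent of $t$ (flatness of $\delta$ together with relative ampleness of $A$), and in keeping the ``sufficiently divisible'' quantifier consistent across these steps. Once those are in place, identifying the cokernel $\mathcal{Q}$ and using that a nonzero coherent sheaf acquires global sections after twisting by an ample divisor converts the numerical equality (2) directly into the sheaf-theoretic equality (1).
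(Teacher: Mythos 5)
Your proof is correct, and while the setup (projection formula, flatness of $Z$ over $R$, and the constancy of $h^0(Z_t,mA_t)$ via Serre vanishing and base change) matches the paper's, the way you obtain the equivalence is genuinely different. The paper proves $(1)\Rightarrow(2)$ by observing that under $(1)$ the right-hand side of the projection formula is an Euler characteristic for $m\gg 0$, and proves $(2)\Rightarrow(1)$ separately via Grauert's theorem: constancy of $h^0$ gives surjectivity of $H^0(X,\cO_X(mL))\to H^0(X_k,\cO_{X_k}(mL_k))$, which identifies $f_k$ as the semiample contraction of $L_k$ and hence forces $f_{k,*}\cO_{X_k}=\cO_{Z_k}$. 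You instead push forward the Koszul sequence of a uniformiser to produce the defect sheaf $\mathcal{Q}=(R^1f_*\cO_X)[t]$ sitting in $0\to\cO_{Z_k}\to f_{k,*}\cO_{X_k}\to\mathcal{Q}\to 0$, and derive the single identity $h^0(X_k,mL_k)-h^0(X_K,mL_K)=h^0(Z_k,\mathcal{Q}(mA_k))$ for $m$ sufficiently divisible, from which both implications drop out at once (using that a nonzero coherent sheaf twisted by a large multiple of an ample divisor has sections). Your route avoids Grauert and the identification of the semiample contraction on the special fibre, and it buys a quantitative refinement: the failure of invariance is measured exactly by the $t$-torsion of $R^1f_*\cO_X$, which in particular shows $h^0(X_k,mL_k)\geq h^0(X_K,mL_K)$ unconditionally. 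The paper's route, on the other hand, yields the extra geometric information that under $(2)$ the map $f_k$ \emph{is} the semiample contraction of $L_k$, which is the conceptual point emphasised in \autoref{r-connected fibers v contraction}. The only points to state explicitly are that $X$ and $Z$ are flat over $R$ (so that $t$ is a nonzerodivisor on $\cO_X$ and on $\cO_Z$, making both the Koszul sequence and the identification $\operatorname{coker}(t\colon\cO_Z\to\cO_Z)=\cO_{Z_k}$ legitimate) and that the injection $\cO_{Z_k}\hookrightarrow f_{k,*}\cO_{X_k}$ produced by the long exact sequence is the natural map induced by $f_k^{\#}$; both are routine and your argument stands.
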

	
	\begin{proof}
		Let $A$ be an ample $\bQ$-divisor on $Z$ such that $L \sim_{\bQ}f^*A$. 
		By the projection formula we have 
		\begin{equation}\label{e-globalsectionsPF}
			h^0(X_t,mL_t)=h^0(Z_t,f_{t,\ast}\cO_{X_t}\otimes\cO_{Z_t}(mA_t))
		\end{equation}
		for all sufficiently divisible $m$ and all $t\in \Spec (R)$. By flat base change we have $f_{K,\ast}\cO_{X_K}=\cO_{Z_K}$.
		
		$(1) \Rightarrow (2)$. Suppose that $f_{k,\ast}\cO_{X_k}=\cO_{Z_k}$. Then the right hand side of Equation (\ref{e-globalsectionsPF}) coincides with $\chi(Z_t,mA_t)$ when $m\gg 0$ by Serre vanishing. Hence we conclude by the invariance of the Euler characteristic in a flat family (\cite[Theorem 9.9]{Ha77}).
		
		$(2) \Rightarrow (1)$. By Grauert's theorem (\cite[Corollary III.12.9]{Ha77}) the natural restriction map
		$H^0(X,\mathcal{O}_X(mL))\to H^0(X_k,\mathcal{O}_{X_k}(mL_k))$
		is surjective for all $m \geq 0$ sufficiently divisible. Hence $f_k$ is the semiample contraction of $L_k$ by \autoref{t-semiamplecontraction}, in particular $f_{k,\ast}\cO_{X_k}=\cO_{Z_k}$. 
	\end{proof}
	
	\begin{remark}\label{r-connected fibers v contraction}
	Suppose that $Z_{k}$ is normal in \autoref{l-stein-invariance} and let $X_{k}\to Y_{k} \xrightarrow{g} Z_{k}$ be the Stein factorisation of $f_{k}$. If $k$ is a field of characteristic $0$ then $g$ is birational and finite, hence an isomorphism.
	On the other hand if $k$ is a positive characteristic field then $g$ may be a non-trivial purely inseparable morphism of normal varieties. This is an obstruction to lifting sections of $mL_k$ (see \cite{Bri20} for an explicit construction with $L=K_X+B$). For this reason, a crucial step in \autoref{thm:ADIOP_SA} will be showing $f_{k,*} \mathcal{O}_{X_k}=\mathcal{O}_{Z_k}$.
    \end{remark}
	
\subsection{Adjunction for non-$\mathbb{Q}$-factorial threefolds}
    In this subsection we fix $R$ to be an excellent DVR with residue field $k$ of characteristic $p>5$.
    We show that threefold plt centres appearing as fibres over $R$ are normal, extending some previous results in the literature to the non-$\mathbb{Q}$-factorial case. 
	
	We start by recalling normality of plt centres in the $\mathbb{Q}$-factorial case. 
	
	\begin{theorem}[{\cite[Corollary 7.17, Remark 7.18]{bhatt2020}}]\label{invAdj}
	 	Let $(X, S + B)$ be a three-dimensional plt pair where $S$ is reduced and $\lfloor{B\rfloor}=0$ and suppose that the closed points of $X$ have characteristic $p \neq 2, 3$ and $5$. 
	 	If $B$ has
		standard coefficients,
		then
		$S$ is normal.
	\end{theorem}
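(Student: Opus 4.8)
The plan is to verify Serre's criterion for normality: since $S$ is reduced, it suffices to show that $S$ is regular in codimension one ($R_1$) and satisfies Serre's condition $S_2$. Both are local statements, so I would localise at a point of $S$ and, if convenient, pass to a strict Henselisation; the plt hypothesis, the reducedness of $S$, and the standard coefficients of $B$ are all preserved under these operations.

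For $R_1$ I would argue at the generic point $\xi$ of an arbitrary prime divisor $C \subset S$. Such a $\xi$ has codimension two in the threefold $X$, so $\mathcal{O}_{X,\xi}$ is a two-dimensional local ring carrying a plt pair (plt localises), and $S$ restricts to a prime divisor through $\xi$. The normality of the reduced boundary of a plt \emph{surface} pair over an excellent base (the logarithmic surface theory of Tanaka) then shows $S$ is regular at $\xi$; since $\xi$ ranges over all codimension-one points of $S$, this gives $R_1$.

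The condition $S_2$ is the main obstacle, and it is where the characteristic-zero route via Kawamata--Viehweg vanishing and connectedness of the non-klt locus is unavailable. Instead I would pass to the mixed-characteristic commutative algebra and use the theory of $+$-regular (equivalently, big Cohen--Macaulay, or BCM) singularities. The role of the hypotheses enters precisely here: for $p > 5$ and $B$ with standard coefficients one can show that $(X, S + B)$ is purely $+$-regular along $S$. Concretely, the standard coefficients allow one to realise the different through the trace of a cyclic cover and to track $+$-regularity across adjunction, while excluding $p \le 5$ avoids the pathological small primes in which these implications fail for threefolds. This is the mixed-characteristic analogue of Schwede's $F$-adjunction and of the Das--Hacon--Schwede purely-$F$-regular criterion.

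Granting pure $+$-regularity of $(X, S+B)$ along $S$, I would conclude by BCM inversion of adjunction. Writing $\nu \colon \bar S \to S$ for the normalisation, the restriction to $S$ of the ambient BCM test ideal computes the pushforward structure sheaf $\nu_* \mathcal{O}_{\bar S}$ together with the conductor, and purity forces the conductor to vanish:
\[
 \nu_* \mathcal{O}_{\bar S} = \mathcal{O}_S .
\]
Combined with the $R_1$ property this shows $\nu$ is an isomorphism, so $S$ is normal. The technical heart, and the step I expect to be hardest, is establishing pure $+$-regularity of the plt pair along $S$: this rests on the existence of sufficiently functorial big Cohen--Macaulay algebras and on the compatibility of their test ideals with restriction to a divisor, which is exactly the mixed-characteristic input that replaces the vanishing theorems used in the classical proof.
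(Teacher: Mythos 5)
The paper does not prove this statement: it is quoted verbatim from \cite[Corollary 7.17, Remark 7.18]{bhatt2020} (the globally $+$-regular MMP paper), so there is no in-paper argument to compare yours against. Judged against the proof in that reference, your roadmap is essentially the right one: the result there is obtained by showing that $(X,S+B)$ is purely BCM-/$+$-regular along $S$, and purity forces the conductor of the normalisation $\nu\colon \bar S\to S$ to be trivial, whence $\nu_*\mathcal{O}_{\bar S}=\mathcal{O}_S$ and $S=\bar S$. (Note that this last identity already gives normality outright, since $\nu$ is finite and $\bar S$ is normal; your separate $R_1$ step via two-dimensional plt localisation is correct but superfluous.) The role of the hypotheses is also as you guess: standard coefficients guarantee that the different on $\bar S$ again has standard coefficients, and for residue characteristic $p>5$ klt surface pairs with standard coefficients are BCM-regular, which is the input that replaces Kawamata--Viehweg vanishing.

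The genuine gap is that you explicitly grant the one step where all the content lives: establishing pure $+$-regularity of $(X,S+B)$ along $S$. As written your logic is also slightly circular in its labelling --- you invoke ``BCM inversion of adjunction'' to pass \emph{from} pure $+$-regularity to normality of $S$, whereas in \cite{bhatt2020} inversion of adjunction is the tool that \emph{produces} pure $+$-regularity from BCM-regularity of the adjoint pair $(\bar S,\mathrm{Diff}_{\bar S}(B))$; the final passage from purity to triviality of the conductor is a separate compatibility argument (the conductor is uniformly compatible with the purity/splitting, so purity forces it to be the unit ideal). Likewise the remark about ``realising the different through the trace of a cyclic cover'' does not reflect how standard coefficients actually enter. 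None of this can be repaired by soft arguments: without the functorial big Cohen--Macaulay algebra machinery and its adjunction theory, the $S_2$/connectedness step has no known substitute in mixed characteristic, which is precisely why the present paper cites the result rather than reproving it.
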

	
	We will be particularly interested in the case where $S=X_{k}$ is the central fibre over a DVR. 
	In this case we write $\Delta_{k}=\Delta|_{X_k}$. Note that, since $X_{k}$ is Cartier, this is well defined even if $\Delta$ is not $\mathbb{Q}$-Cartier.
	Moreover, if $(X,X_{k}+\Delta)$ is plt then the different $\textup{Diff}_{X_k}(\Delta)$ coincides with $\Delta_{k}$ by \cite[Proposition 4.5]{kk-singbook}.
	
\begin{corollary}\label{normality}
% 	Let $R$ be an excellent DVR with residue field of characteristic $p>5$.
	Let $(X,X_{k}+\Delta)$ be a $\mathbb{Q}$-factorial threefold plt pair, quasi-projective over $R$. 
	Then $X_{k}$ is normal and $(X_k, \Delta_k)$ is klt.
\end{corollary}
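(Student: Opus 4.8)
The plan is to obtain normality of $X_k$ from the criterion \autoref{invAdj} and then to deduce that $(X_k,\Delta_k)$ is klt by adjunction. The only gap between our hypotheses and \autoref{invAdj} is that the boundary $\Delta$ need not have standard coefficients, so the first task is to arrange this.

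Since $(X,X_k+\Delta)$ is plt we have $\lfloor\Delta\rfloor=0$, so every coefficient of $\Delta$ lies in $[0,1)$. I would define $\Delta'$ by replacing each coefficient $c$ of $\Delta$ with the largest standard coefficient $1-\tfrac1m\le c$ (taking $0$ when $c<\tfrac12$). Then $0\le\Delta'\le\Delta$, the divisor $\Delta'$ has standard coefficients, and $\lfloor\Delta'\rfloor=0$. As $X$ is $\mathbb{Q}$-factorial, $K_X+X_k+\Delta'$ is automatically $\mathbb{Q}$-Cartier, so $(X,X_k+\Delta')$ is a pair; and since lowering the boundary only raises discrepancies, $(X,X_k+\Delta')$ is again plt, with $X_k$ reduced and with residue characteristics of closed points equal to $0$ (on the generic fibre) or $p>5$, in particular all different from $2,3,5$. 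I would then apply \autoref{invAdj} with $S=X_k$ and $B=\Delta'$ to conclude that $X_k$ is normal. Note that normality of the underlying scheme $X_k$ depends only on $(X,X_k)$ and not on the coefficients of the boundary, so we are free to return to the original $\Delta$.

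With $X_k$ now known to be normal, adjunction applies along $X_k$. By the discussion preceding the statement, $\textup{Diff}_{X_k}(\Delta)=\Delta_k$, and the adjunction formula reads $K_{X_k}+\Delta_k=(K_X+X_k+\Delta)|_{X_k}$. Since $(X,X_k+\Delta)$ is plt and $X_k$ is normal, the restricted pair $(X_k,\Delta_k)$ is klt by adjunction for plt centres (\cite{kk-singbook}), which finishes the proof.

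The substance of the argument is imported from \autoref{invAdj}: once $X_k$ is exhibited as a plt centre of a pair with standard coefficients, normality is immediate, and klt-ness of the restriction is then a formal consequence of adjunction. Accordingly I do not expect a deep obstacle here; the only points requiring care are that the coefficient rounding genuinely preserves plt-ness (it does, by the monotonicity of discrepancies) and that \autoref{invAdj} is applied to $\Delta'$ whereas the klt conclusion is read off for the original $\Delta$, which is legitimate because the two steps concern, respectively, the scheme $X_k$ and the adjunction of the full boundary.
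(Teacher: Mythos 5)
Your argument is correct and follows essentially the same route as the paper: reduce to a boundary with standard coefficients using $\mathbb{Q}$-factoriality and monotonicity of discrepancies, apply \autoref{invAdj} to get normality of $X_k$, then conclude that $(X_k,\Delta_k)$ is klt by adjunction. The only (immaterial) difference is that the paper simply drops $\Delta$ altogether and applies \autoref{invAdj} to the plt pair $(X,X_k)$ with $B=0$, rather than rounding the coefficients of $\Delta$ down to standard ones.
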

\begin{proof}
	Since $X$ is $\mathbb{Q}$-factorial, the pair $(X, X_k)$ is plt.
	By \autoref{invAdj} we conclude $X_{k}$ is normal and thus by adjunction (\cite[Lemma 4.8]{kk-singbook}) we conclude $(X_k, \Delta_k)$ is klt.
\end{proof}
	
	Given suitable vanishing results over $k$, we can extend this adjunction result to more general situations by making use of lifting arguments.
	
\begin{proposition}\label{push-lift}
	Let $S$ be the spectrum of a local Artinian ring and $S \hookrightarrow S'$ be a closed immersion defined by a square-zero ideal $I$.  Let $f\colon Y \to S$, and $h\colon X \to S$ be flat morphisms and let $g\colon Y \to X$ be a morphism of $S$-schemes. 
	Suppose that $g_{*} \ox[Y]=\ox$, $R^{1}g_{*} \ox[Y] = 0$ and $Y$ has a flat lifting $f' \colon Y' \to S'$. Then there exists a flat lifting $h' \colon X'\to S'$ and a morphism $g' \colon Y' \to X'$ making the following commutative diagram:
		
	\[\begin{tikzcd}
		Y \arrow[r] \arrow[d, "g",swap]  \arrow[bend right=60,swap, "f"]{dd}
		& Y' \arrow[d, "g'"] \arrow[bend left=60, "f'"]{dd} \\
		X \arrow[d, "h",swap] \arrow[r] & X' \arrow[d, "h'"] \\
		S \arrow[r]                        & S'    .   
	\end{tikzcd}\]
	Moreover,  $g'_{*} \ox[Y']=\ox[X']$ and $R^{1}g'_{*} \ox[Y'] = 0$.
\end{proposition}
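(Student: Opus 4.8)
The plan is to construct $X'$ directly by pushing forward the structure sheaf of the given lifting $Y'$, and then to check that the cohomological hypotheses make this a flat lifting of $X$ carrying the desired contraction. Since $S\hookrightarrow S'$ is a nilpotent (indeed square-zero) thickening, a flat lifting does not change underlying topological spaces: $|Y'|=|Y|$, and any lifting of $X$ must have underlying space $|X|$. I therefore regard $g$ also as the continuous map $|Y'|=|Y|\to|X|$ and simply \emph{define} $\ox[X']:=g_{*}\ox[Y']$, a sheaf of $\ox[S']$-algebras on $|X|$. Everything then reduces to analysing this sheaf.

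Write $S=\Spec A$, $S'=\Spec A'$ and $I=\ker(A'\to A)$, an $A$-module with $I^{2}=0$. Flatness of $Y'/S'$ gives the exact sequence $0\to I\otimes_{A}\ox[Y]\to\ox[Y']\to\ox[Y]\to 0$ on $Y$, obtained by tensoring $0\to I\to A'\to A\to 0$ with $\ox[Y']$ over $A'$ and using $\ox[Y']\otimes_{A'}A=\ox[Y]$. Applying $g_{*}$ produces the long exact sequence
\[
0\to g_{*}(I\otimes_{A}\ox[Y])\to g_{*}\ox[Y']\to g_{*}\ox[Y]\xrightarrow{\partial} R^{1}g_{*}(I\otimes_{A}\ox[Y])\to R^{1}g_{*}\ox[Y']\to R^{1}g_{*}\ox[Y].
\]
The heart of the argument is to show that $g_{*}(I\otimes_{A}\ox[Y])=I\otimes_{A}\ox$ and that $R^{1}g_{*}(I\otimes_{A}\ox[Y])=0$. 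Since $A$ is Artinian, $I$ has a finite filtration by $A$-submodules with successive quotients isomorphic to the residue field $k$; by flatness of $\ox[Y]$ over $A$ this induces a filtration of $I\otimes_{A}\ox[Y]$ whose graded pieces are $k\otimes_{A}\ox[Y]=\ox[Y_{0}]$, the structure sheaf of the central fibre $Y_{0}=Y\times_{S}\Spec k$. Using the long exact sequences of the filtration, it suffices to prove $g_{0,*}\ox[Y_{0}]=\ox[X_{0}]$ and $R^{1}g_{0,*}\ox[Y_{0}]=0$ for the induced map $g_{0}\colon Y_{0}\to X_{0}$ on central fibres. This is where I expect the real work to lie: it is precisely the statement that the formation of $g_{*}\ox[Y]=\ox$ and the vanishing $R^{1}g_{*}\ox[Y]=0$ are compatible with the base change $\Spec k\hookrightarrow S$, which I would deduce from cohomology and base change, exploiting that $\ox$ is flat over $A$ (so that the relevant $\mathrm{Tor}_{1}$ term vanishes) together with $R^{1}g_{*}\ox[Y]=0$. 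Granting this, $\partial=0$ and the sequence collapses to $0\to I\otimes_{A}\ox\to g_{*}\ox[Y']\to\ox\to 0$ with $R^{1}g_{*}\ox[Y']=0$.

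It then remains to read off the conclusions. This last sequence exhibits $\ox[X']=g_{*}\ox[Y']$ as a square-zero extension of the scheme $(|X|,\ox)$ by the quasi-coherent $\ox$-module $I\otimes_{A}\ox$; such a ringed space is a scheme, affine-locally given by the corresponding square-zero extension of rings, so $X'=(|X|,\ox[X'])$ is a scheme. Reducing modulo $I$ recovers $\ox[X']\otimes_{A'}A=\ox$, so $X'\times_{S'}S=X$, and the local criterion of flatness along the square-zero ideal $I$ — using that $\ox[X']/I\ox[X']=\ox$ is $A$-flat and that $I\otimes_{A}\ox\xrightarrow{\sim} I\ox[X']$ by the sequence — shows that $h'\colon X'\to S'$ is flat. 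Finally, the identity of $g_{*}\ox[Y']=\ox[X']$ corresponds by adjunction to a morphism $g'\colon Y'\to X'$ lifting $g$; by construction $g'_{*}\ox[Y']=\ox[X']$, and $R^{1}g'_{*}\ox[Y']=R^{1}g_{*}\ox[Y']=0$ as computed above. The main obstacle, as indicated, is the base-change step guaranteeing $R^{1}g_{0,*}\ox[Y_{0}]=0$; everything else is formal once the defining sequence for $\ox[X']$ is in hand.
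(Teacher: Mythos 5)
Your overall architecture---setting $\cO_{X'}:=g_*\cO_{Y'}$ on $|X|$, extracting $0\to I\otimes_A\cO_Y\to\cO_{Y'}\to\cO_Y\to 0$ from flatness of $Y'$, building the scheme structure on the resulting square-zero extension, and checking flatness of $X'$ by the local criterion along $I$---is exactly the construction of Cynk--van Straten to which the paper defers, and those formal parts are fine. The gap sits precisely at the step you yourself flag as ``the real work''. You need $g_*(I\otimes_A\cO_Y)=I\otimes_A\cO_X$ and $R^1g_*(I\otimes_A\cO_Y)=0$, and you correctly reduce this, via a filtration of $I$ with graded pieces $k$, to the statements $g_{0,*}\cO_{Y_0}=\cO_{X_0}$ and $R^1g_{0,*}\cO_{Y_0}=0$ on the central fibre; but the claim that these follow ``from cohomology and base change'' plus flatness of $\cO_X$ and $R^1g_*\cO_Y=0$ is not a valid deduction. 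Base change runs the wrong way here: the comparison map $\phi^1\colon R^1g_*\cO_Y\otimes_A k\to R^1g_{0,*}\cO_{Y_0}$ is an isomorphism only once it is known to be \emph{surjective}, and its surjectivity is governed by $R^2g_*$ (equivalently by $\phi^2$), about which the hypotheses say nothing; flatness of $g_*\cO_Y$ only lets you descend from $\phi^1$ to $\phi^0$, not establish $\phi^1$. Concretely, over $A=k[\epsilon]$ the long exact sequence attached to $0\to\cO_{Y_0}\to\cO_Y\to\cO_{Y_0}\to 0$ gives, from $R^1g_*\cO_Y=0$, only an injection $R^1g_{0,*}\cO_{Y_0}\hookrightarrow R^2g_{0,*}\cO_{Y_0}$, so when $g$ has $2$-dimensional fibres nothing forces the left-hand side to vanish. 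This is the mathematical heart of the proposition, and it is not proved.

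The way this is resolved in the cited source, and in the only place the proposition is used (\autoref{adj-push}), is that the central-fibre vanishing is an \emph{input} rather than a consequence: one starts from $R^1g_{1,*}\cO_{X_1}=0$ over the residue field and climbs the tower $R_i=R/\mathfrak{m}^i$, where each ideal $\ker(R_{i+1}\to R_i)=\mathfrak{m}^i/\mathfrak{m}^{i+1}$ is already a $k$-vector space; hence $I\otimes_A\cO_Y$ is a finite direct sum of copies of $\cO_{Y_0}$ and the required vanishing of $g_*$ and $R^1g_*$ on it is immediate from the level-one hypothesis, with no base change needed. To repair your argument, either add the hypotheses $g_{0,*}\cO_{Y_0}=\cO_{X_0}$ and $R^1g_{0,*}\cO_{Y_0}=0$ on the closed fibre to the statement (they are available in the application and are what the ``Moreover'' clause is designed to propagate through the induction), or restrict to small extensions where $I$ is a $k$-module and invoke the level-one vanishing directly.
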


\begin{proof}
    This is the construction of \cite[Theorem 3.1]{cynk2009small}.
\end{proof}

\begin{theorem}\label{adj-push}
	Let $X$ be a normal projective $R$-scheme such that $X_{k}$ is normal. 
	Let $f \colon X \to Z$ be a contraction over $R$ and suppose that $$f_{k}\colon X_{k} \xrightarrow{g_{1}} Y_{1} \xrightarrow{h_{1}} Z_{k}$$ is the Stein factorisation of $f_{k}$. If $R^{1}g_{1,*} \ox[X_{k}]=0$, then $h_1$ is an isomorphism. In particular, $f_k$ is a contraction and $Z_k$ is normal.
\end{theorem}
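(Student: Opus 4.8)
The plan is to reduce everything to the single claim $f_{k,*}\ox[X_{k}]=\ox[Z_{k}]$. Granting this, $Y_{1}=\underline{\Spec}_{Z_{k}}(f_{k,*}\ox[X_{k}])=Z_{k}$, so $h_{1}$ is an isomorphism; consequently $f_{k}=g_{1}$ is a contraction and $Z_{k}\cong Y_{1}$ is normal, the latter because $Y_{1}$ is the Stein factorisation of a contraction out of the normal scheme $X_{k}$ (\autoref{Stein}). I fix a uniformiser $t\in R$. As $X$ is integral and dominates $\Spec(R)$ it is flat over $R$, so $t$ is a nonzerodivisor on $\ox$ and $\ox[X_{k}]=\ox\otimes_{R}k$; since $Z$ is the image of $X$ it is integral and dominates $\Spec(R)$ as well, so $t$ is a nonzerodivisor on $f_{*}\ox=\ox[Z]$ with cokernel $\ox[Z_{k}]$.

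The key step is to establish the vanishing $R^{1}f_{k,*}\ox[X_{k}]=0$. I would get this from the hypothesis by writing $f_{k}=h_{1}\circ g_{1}$ and noting that, since $h_{1}$ is finite (hence affine), the Leray spectral sequence degenerates to give $R^{1}f_{k,*}\ox[X_{k}]=h_{1,*}R^{1}g_{1,*}\ox[X_{k}]=0$. I then apply $f_{*}$ to the short exact sequence $0\to\ox\xrightarrow{\,t\,}\ox\to\ox[X_{k}]\to0$. Using $f_{*}\ox=\ox[Z]$ and the identification $R^{i}f_{*}\ox[X_{k}]=R^{i}f_{k,*}\ox[X_{k}]$ (both $X_{k}\hookrightarrow X$ and $Z_{k}\hookrightarrow Z$ are affine immersions), the long exact sequence yields
\begin{equation*}
0\to\ox[Z_{k}]\to f_{k,*}\ox[X_{k}]\to (R^{1}f_{*}\ox)[t]\to 0
\end{equation*}
together with an injection $(R^{1}f_{*}\ox)/t\hookrightarrow R^{1}f_{k,*}\ox[X_{k}]=0$, where $(R^{1}f_{*}\ox)[t]$ denotes the $t$-torsion subsheaf.

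It then remains to show $(R^{1}f_{*}\ox)[t]=0$. From the injection above, $t$ acts surjectively on the coherent sheaf $R^{1}f_{*}\ox$, i.e.\ $(R^{1}f_{*}\ox)/t=0$; at each $z\in Z_{k}$ we have $t\in\mathfrak{m}_{z}$, so Nakayama forces the stalk $(R^{1}f_{*}\ox)_{z}=0$, and thus $R^{1}f_{*}\ox$ is supported on the generic fibre $Z_{K}$. Its $t$-torsion, being supported on $V(t)=Z_{k}$, must then vanish, giving $f_{k,*}\ox[X_{k}]=\ox[Z_{k}]$ as wanted. I expect the only genuinely delicate point to be conceptual rather than computational: because cohomology and base change may fail, $f$ being a contraction does not by itself make $f_{k}$ one, and the obstruction is exactly the torsion $(R^{1}f_{*}\ox)[t]$; the role of the hypothesis $R^{1}g_{1,*}\ox[X_{k}]=0$, transmitted through the finiteness of $h_{1}$, is precisely to annihilate it. As an alternative I could instead lift $g_{1}$ to all infinitesimal orders via \autoref{push-lift}, algebraise the resulting formal contraction, and compare it with $f$ using \autoref{as-ext}, but the cohomological route above avoids any recourse to algebraisation.
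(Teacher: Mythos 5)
Your argument is correct, but it follows a genuinely different route from the paper. The paper completes $R$, uses \autoref{push-lift} to lift the contraction $g_1\colon X_1\to Y_1$ to compatible contractions $g_i\colon X_i\to Y_i$ over each $R/\mathfrak m^i$ (this is where $R^1g_{1,*}\ox[X_k]=0$ enters, via the Cynk--van Straten construction), algebraises the resulting tower of finite morphisms $Y_i\to Z_i$ to a finite $Y\to Z$ over $R$, and then identifies $X\to Y\to Z$ with the Stein factorisation of $f$ to force $h$, hence $h_1$, to be an isomorphism. You instead transmit the hypothesis through the Leray spectral sequence for the affine morphism $h_1$ to get $R^1f_{k,*}\ox[X_k]=0$, and then run the long exact sequence of $R^\bullet f_*$ on $0\to\ox\xrightarrow{t}\ox\to\ox[X_k]\to 0$, using Nakayama to kill the stalks of $R^1f_*\ox$ along $Z_k$ and hence its $t$-torsion, which is exactly the cokernel of $\ox[Z_k]\to f_{k,*}\ox[X_k]$. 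This is a standard cohomology-and-base-change argument over a DVR; it is more elementary and self-contained than the paper's proof, avoids passing to the completion and the appeal to algebraisation, and cleanly identifies the obstruction to $f_k$ being a contraction as $(R^1f_*\ox)[t]$. What the paper's method buys in exchange is an actual flat lift $Y/R$ of $Y_1$ together with the factorisation $X\to Y\to Z$, which is the shape of statement that generalises (and matches how \autoref{push-lift} is packaged); for the theorem as stated your route suffices. The only point worth making explicit is the (trivial) reduction to the case where $X$ dominates $\Spec(R)$, so that $t$ is indeed a nonzerodivisor on $\ox$ and on $\ox[Z]$; otherwise $f=f_k$ and there is nothing to prove.
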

	
\begin{proof}
	Since we are only interested in the special fibre, we can replace $R$ with its completion at its maximal ideal $\mathfrak{m}$ without any loss of generality.
	Write $R_{i}:=R/\mathfrak{m}^{i}$ then let $X_{i}:=X \times_{\Spec(R)} \Spec(R_{i})$, $Z_{i}:=Z\times_{\Spec(R)} \Spec(R_{i})$ and $f_{i}=f\times_{\Spec(R)} \Spec(R_{i})\colon X_{i} \to Z_{i}$.
	Then $f_{1}$ factors as $f_{1}\colon X_{1} \xrightarrow{g_{1}} Y_{1} \xrightarrow{h_{1}} Z_{1}$ where $R^{1}g_{1,*}\ox[X_{1}]=0$, so by \autoref{push-lift} we can lift $g_{1}\colon X_{1} \to Y_{1}$ to $g_{i}\colon X_{i} \to Y_{i}$ over $R_{i}$ such that the following diagram commutes.
	
	\[\begin{tikzcd}
		X_{1} \arrow[r] \arrow[d, "g_{1}"] & X_{2} \arrow[r] \arrow[d, "g_{2}"] & \dots \\
		Y_{1} \arrow[r] \arrow[d, "h_{1}"] & Y_{2} \arrow[d, dotted, "h_{2}"] \arrow[r]  & \dots \\
		Z_{1} \arrow[r]                    & Z_{2} \arrow[r]                    & \dots
	\end{tikzcd}\]
	For $i \in \mathbb{Z}_{>0}$, the morphisms $h_{i}$ are defined as follows. The underlying topological map is just $h_{1}$ and the map $\ox[Z_{i}] \to h_{i,*}\ox[Y_{i}]$ comes from the map ${\ox[Z_{i}] \to f_{i,*}\ox[X_{i}]}$ and the identification $f_{i,*}\ox[X_{i}]=h_{i,*}g_{i,*}\ox[X_{i}]\simeq h_{i,*}\ox[Y_{i}]$.
	Each $h_{i}$ is finite, and thus by
	\cite[\href{https://stacks.math.columbia.edu/tag/09ZT}{Tag 09ZT}]{stacks-project} we have that the compatible system $\left\{Y_{i} \to Z_i \right\}$ lifts to a finite morphism $Y \to Z$ over $R$. By \cite[\href{https://stacks.math.columbia.edu/tag/0A42}{Tag 0A42}]{stacks-project} there is a factorisation ${f\colon X \xrightarrow{g} Y \xrightarrow{h} Z}$, where $g_{*}\ox = \cO_Y$, because $g_{i,*}\cO_{X_i}=\cO_{Y_i}$ for all $i$. Similarly $h$ is a finite morphism. 
	
	Therefore $f \colon X \xrightarrow{g} Y \xrightarrow{h} Z$ is the Stein factorisation for $f$, but since $f$ is a contraction of normal schemes we conclude that $h$ has to be an isomorphism.
	In particular, $h_1$ is an isomorphism and $Z_{k}=Y_{k}$.
\end{proof}

	\begin{lemma}\label{invAdj2}
		Let $X \to \Spec(R)$ be a projective contraction of normal schemes. Suppose that
		\begin{enumerate}
			\item $(X, X_k+\Delta)$ is a threefold plt pair and $X_k$ is normal;
			\item there is a projective contraction $f \colon X \to Z$ over $R$ such that $-(K_{X_{k}}+\Delta_{k})$ is $f_{k}$-big and $f_k$-nef.
		\end{enumerate}  
	Then $Z_{k}$ is normal and $f_{k,*}\ox[X_{k}]=\ox[Z_{k}]$. Further, if $f$ is birational and $B:=f_{*}\Delta$, then $(Z, Z_k+B)$ is plt and $(Z_{k},B_{k})$ is klt.
	\end{lemma}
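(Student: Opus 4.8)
The plan is to treat the two assertions separately: first the normality of $Z_k$ together with $f_{k,*}\ox[X_k]=\ox[Z_k]$, and then, under the extra hypothesis that $f$ is birational, the plt-ness of $(Z,Z_k+B)$ and the klt-ness of $(Z_k,B_k)$. For the first assertion I would reduce everything to an application of \autoref{adj-push}. Since $(X,X_k+\Delta)$ is plt and $X_k$ is normal, adjunction (\cite[Proposition 4.5, Lemma 4.8]{kk-singbook}) gives $\textup{Diff}_{X_k}(\Delta)=\Delta_k$ and shows $(X_k,\Delta_k)$ is klt, exactly as in the proof of \autoref{normality}. Let $f_k\colon X_k\xrightarrow{g_1}Y_1\xrightarrow{h_1}Z_k$ be the Stein factorisation of $f_k$. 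As $h_1$ is finite, every curve or fibre contracted by $g_1$ is contracted by $f_k$, so hypothesis (2) makes $-(K_{X_k}+\Delta_k)$ both $g_1$-nef and $g_1$-big.

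The key input is then relative Kawamata--Viehweg vanishing for the klt surface pair $(X_k,\Delta_k)$: applied to the trivial bundle (that is, with $N=0$, so that $N-(K_{X_k}+\Delta_k)=-(K_{X_k}+\Delta_k)$ is $g_1$-nef and $g_1$-big) it yields
\[
R^{1}g_{1,*}\ox[X_k]=0.
\]
With this the hypotheses of \autoref{adj-push} are satisfied, and I conclude that $h_1$ is an isomorphism, that $Z_k$ is normal, and that $f_k$ is a contraction, i.e.\ $f_{k,*}\ox[X_k]=\ox[Z_k]$.

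For the second assertion, assume $f$ is birational. Since $Z_k$ is Cartier and $f^{*}Z_k=X_k$, the map $f_k\colon X_k\to Z_k$ is a generically finite contraction of normal surfaces, hence birational. I would write
\[
K_X+X_k+\Delta=f^{*}(K_Z+Z_k+B)+\sum_i a_iE_i,
\]
where the $E_i$ are the $f$-exceptional prime divisors; as the only vertical divisor on $X$ is $X_k$, which is not $f$-exceptional, each $E_i$ dominates $\Spec(R)$, so $f(E_i)$ is horizontal of dimension $1$, and the restriction $(E_i)_k$ is a nonzero $f_k$-exceptional divisor on $X_k$. Restricting the displayed relation to the Cartier divisor $X_k$ and using adjunction on both $X_k$ and the now-normal $Z_k$ gives
\[
K_{X_k}+\Delta_k=f_k^{*}(K_{Z_k}+B_k)+\sum_i a_i(E_i)_k,
\]
with $B_k=(f_k)_*\Delta_k=B|_{Z_k}$. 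Because $-(K_{X_k}+\Delta_k)$ is $f_k$-nef, the $f_k$-exceptional divisor $-\sum_i a_i(E_i)_k$ is $f_k$-nef, so the negativity lemma (\cite[Lemma 2.14]{bhatt2020}) on the surface $X_k$ forces $\sum_i a_i(E_i)_k\ge 0$, and hence each $a_i\ge 0$. Effectivity of $\sum_i a_iE_i$ then yields the discrepancy inequality $a(E,Z,Z_k+B)\ge a(E,X,X_k+\Delta)$ for every divisorial valuation $E$ over $Z$; since $(X,X_k+\Delta)$ is plt, each such discrepancy exceeds $-1$, and therefore $(Z,Z_k+B)$ is plt. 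Finally, as $Z_k$ is normal, adjunction as in \autoref{normality} shows $(Z_k,B_k)$ is klt.

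I expect the main obstacle to be the relative vanishing $R^{1}g_{1,*}\ox[X_k]=0$ in the first part: unlike in characteristic zero one cannot invoke Kawamata--Viehweg vanishing unconditionally, and one must genuinely use that $(X_k,\Delta_k)$ is a klt surface pair in residue characteristic $p>5$ with $-(K_{X_k}+\Delta_k)$ relatively nef and big, so that the available surface vanishing applies uniformly across all possible fibre dimensions of $g_1$ (birational, fibred over a curve, or contracting to a point). A secondary delicate point, in the second part, is extracting the individual inequalities $a_i\ge 0$ from the negativity lemma: the divisors $(E_i)_k$ are distinct for distinct $i$ away from codimension-two loci, so one argues componentwise using that each $E_i$ is horizontal and hence detected on the central fibre $X_k$.
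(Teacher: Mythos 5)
Your proof of the first assertion follows the paper's argument exactly: adjunction to see $(X_k,\Delta_k)$ is klt, Stein factorisation of $f_k$, relative vanishing $R^1 g_{1,*}\cO_{X_k}=0$ for the klt surface pair with $-(K_{X_k}+\Delta_k)$ nef and big over the Stein part, then \autoref{adj-push}. The paper sources the vanishing from \cite[Proposition 3.2]{Tan18} (birational case) and \cite[Theorem 5.7]{BT22} (fibration case, where $p>5$ enters), which is precisely the point you flag as the main obstacle; this half is fine.

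For the ``Further'' assertion your route diverges from the paper's (which simply asserts that plt-ness passes to $(Z,Z_k+B)$ because the plt centre $Z_k$ is not contracted -- in all of the paper's applications $f$ is in fact $(K_X+X_k+\Delta)$-crepant, so discrepancies are literally preserved), and your version has a genuine gap at the step where you extract the individual inequalities $a_i\ge 0$. The negativity lemma on the surface $X_k$ only gives effectivity of the divisor $\sum_i a_i(E_i)_k$, i.e.\ nonnegativity of $\sum_i a_i\,\mathrm{mult}_C\bigl((E_i)_k\bigr)$ for each curve $C\subset X_k$. Two distinct horizontal $f$-exceptional prime divisors $E_i\neq E_j$ can perfectly well meet along a curve lying in $X_k$, in which case $(E_i)_k$ and $(E_j)_k$ share a component; your parenthetical that the $(E_i)_k$ are ``distinct away from codimension-two loci'' does not help, because a common component is codimension two in $X$ but codimension one in $X_k$, which is exactly where the coefficient comparison happens. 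So $a_1+a_2\ge 0$ with $a_1<0<a_2$ is not excluded by anything you have written, and the special fibre alone cannot recover the individual $a_i$. Note also that the hypothesis only gives $f_k$-nefness of $-(K_{X_k}+\Delta_k)$, so you cannot repair this by running the negativity lemma on $X$ over $Z$ (or on the generic fibre) without further input. The clean statement one should prove and use here is \autoref{l:increase-discr} applied to $f\colon X\to Z$ under the hypothesis that $-(K_X+X_k+\Delta)$ is $f$-nef (automatic in the crepant case): this gives $a(E,X,X_k+\Delta)\le a(E,Z,Z_k+B)$ for every $E$ in one stroke, from which plt-ness of $(Z,Z_k+B)$ and, by adjunction on the now-normal $Z_k$, klt-ness of $(Z_k,B_k)$ follow as you say. (A further point, which the paper is also silent on: the conclusion presupposes that $K_Z+Z_k+B$ is $\mathbb{Q}$-Cartier, which your displayed crepant-pullback formula assumes without justification; again this is automatic in the crepant applications.)
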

	
	\begin{proof}
		Since $X_{k}$ is normal, the pair $(X_{k},\Delta_{k})$ is klt by adjunction (\cite[Lemma 4.8]{kk-singbook}). 
		Let $$f_{k}\colon X_{k} \xrightarrow{\bar{f}_{k}} \bar{Z_k} \xrightarrow{h_k} Z_{k}$$ be the Stein factorisation of $f_k$. 
		Since $-(K_{X_{k}}+\Delta_{k})$ is $\bar{f}_{k}$-big and $\bar{f}_{k}$-nef, we conclude $R^{i}\bar{f}_{k,*}\ox[X_{k}]=0$ for $i> 0$ by \cite[Proposition 3.2]{Tan18} and \cite[Theorem 5.7]{BT22}.
		By \autoref{adj-push} $h_k$ is an isomorphism, $f_{k,*}\ox[X_{k}]=\ox[Z_{k}]$ and $Z_{k}$ is normal.
		
		Suppose now $f$ is birational. As $(X,X_{k}+\Delta)$ is plt, so is $(Z,Z_k+B)$ as the plt centre $Z_k$ is not contracted. Hence $(Z_k,B_k)$ is klt by adjunction.	
		\end{proof}

We are now able to prove the normality of the special fibre in a plt family not necessarily $\mathbb{Q}$-factorial.

\begin{corollary}\label{invAdj3}
% 	Let $R$ be an excellent DVR with residue field of characteristic $p> 5$. 
	Let $X \to \Spec(R)$ be a projective contraction of normal schemes.
	Suppose $(X,X_{k}+\Delta)$ is a threefold plt pair. Then $X_{k}$ is normal and $(X_{k}, \Delta_{k})$ is klt.	
\end{corollary}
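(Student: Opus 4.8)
The plan is to reduce to the $\mathbb{Q}$-factorial case already settled in \autoref{normality} by passing to a small $\mathbb{Q}$-factorialisation, and then to descend normality back to $X_k$ by invoking \autoref{invAdj2}. Since $X$ is normal and projective over the DVR $R$ it is in particular quasi-projective over $R$, so \autoref{Q-factorial}, applied to the plt pair $(X,X_k+\Delta)$, produces a small projective birational morphism $g\colon Y\to X$ with $Y$ being $\mathbb{Q}$-factorial.

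Next I would transport the pair to $Y$. As $g$ is small it contracts no divisor, so the crepant identity $K_Y+g_*^{-1}(X_k+\Delta)=g^*(K_X+X_k+\Delta)$ holds and $(Y,g_*^{-1}(X_k+\Delta))$ is again plt. Writing $\Delta_Y:=g_*^{-1}\Delta$ and observing that $g^*X_k=Y_k=g_*^{-1}X_k$ (the special fibre being the Cartier divisor cut out by a uniformiser and $g$ being small), the pair reads $(Y,Y_k+\Delta_Y)$, a $\mathbb{Q}$-factorial plt threefold. Hence \autoref{normality} gives that $Y_k$ is normal and that $(Y_k,(\Delta_Y)_k)$ is klt.

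Finally I would apply \autoref{invAdj2} with $Y$ in the role of its ``$X$'' and $g\colon Y\to X$ in the role of its ``$f\colon X\to Z$'', so that $X$ plays the role of $Z$. The hypotheses to check are: $(Y,Y_k+\Delta_Y)$ is a plt threefold with $Y_k$ normal (just established); $g$ is a projective contraction over $R$ (being projective birational between normal schemes, it satisfies $g_*\ox[Y]=\ox[X]$); and $-(K_{Y_k}+(\Delta_Y)_k)$ is $g_k$-big and $g_k$-nef. For the last point, subtracting $Y_k$ from the crepant identity yields $K_Y+\Delta_Y=g^*(K_X+\Delta)$, so by adjunction on the normal divisor $Y_k$ the class $K_{Y_k}+(\Delta_Y)_k$ is $g_k$-numerically trivial; since $g$ is small, $g_k$ is an isomorphism over a big open subset of $X_k$ and hence birational, so numerical triviality over $g_k$ yields both $g_k$-nefness and, the general fibre of $g_k$ being a point, $g_k$-bigness. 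Then \autoref{invAdj2} concludes that $X_k$ (the ``$Z_k$'') is normal, that $g_{k,*}\ox[Y_k]=\ox[X_k]$, and, because $g$ is birational with $g_*\Delta_Y=\Delta$, that $(X,X_k+\Delta)$ is plt and $(X_k,\Delta_k)$ is klt, which is exactly the claim.

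The step I expect to be the main obstacle is the verification of condition (2) of \autoref{invAdj2}: correctly restricting the crepant relation $K_Y+\Delta_Y=g^*(K_X+\Delta)$ to the special fibre through adjunction, and confirming that the small morphism $g$ induces a birational $g_k\colon Y_k\to X_k$. This is precisely where the smallness of the $\mathbb{Q}$-factorialisation is indispensable, since it guarantees both the crepancy and the birationality on special fibres.
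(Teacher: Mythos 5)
Your proposal is correct and follows essentially the same route as the paper: pass to a small $\mathbb{Q}$-factorialisation via \autoref{Q-factorial}, apply \autoref{normality} upstairs, and descend via \autoref{invAdj2} using that the small morphism is $(K_Y+Y_k+\Delta_Y)$-trivial and birational on special fibres. Your more detailed verification of the hypotheses of \autoref{invAdj2} (crepancy restricted to $Y_k$ and relative bigness from generic finiteness of $g_k$) matches what the paper leaves implicit.
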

	
	\begin{proof}
		Let $f\colon Y \to X$ be a small $\mathbb{Q}$-factorialisation given by \autoref{Q-factorial} and write $K_Y+Y_k+\Delta_Y=f^*(K_X+X_k+\Delta)$. Then $(Y,Y_{k}+\Delta_{Y})$ is a $\mathbb{Q}$-factorial plt pair and hence $Y_{k}$ is normal by \autoref{normality}. By construction $f$ is a $(K_{Y}+Y_k+\Delta_{Y})$-trivial birational contraction, thus \autoref{invAdj2} ensures the result.
	\end{proof}
	
	\subsection{MMP in families}
	
	In this subsection we fix $R$ to be an excellent DVR with residue field $k$ of characteristic $p>5$.
	We collect some results on the MMP in families over $R$ that we will use in \autoref{s-inv-plurigenera}.
	
We start by recalling that discrepancies do not decrease while running an MMP.
		\begin{lemma}\label{l:increase-discr}
		Let 
		\[
		\xymatrix{
			X \ar[dr]_{f}   \ar@{-->}[rr]^{\varphi} &  &  X' \ar[dl]^{f'}  \\
			&Z & ,
		}
		\]
		be a commutative diagram,  where $(X,\Delta)$ and $(X', \Delta')$ are pairs and the morphisms $f$ and $f'$ are birational.
		Assume that
		\begin{enumerate}
			\item $f_*\Delta=f'_*\Delta'$;
			\item $-(K_X+\Delta)$ is $f$-nef;
			\item $K_{X'}+\Delta'$ is $f'$-nef. 
		\end{enumerate}
		Then for any exceptional divisor $E$ over $Z$ we have $$a(E, X, \Delta ) \leq a(E, X', \Delta').$$
		Furthermore, if $-(K_X+\Delta)$ is $f$-ample and $f$ is not an isomorphism above the generic point of $\cent_X(E)$, then
		$$ a(E, X, \Delta ) < a(E, X', \Delta').$$
	\end{lemma}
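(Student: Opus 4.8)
The plan is to pass to a common resolution, compare the two discrepancy divisors, and read off both inequalities from the negativity lemma.

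First I would choose a normal scheme $W$ with projective birational morphisms $p\colon W\to X$ and $q\colon W\to X'$ resolving $\varphi$, and set $g:=f\circ p=f'\circ q\colon W\to Z$. Enlarging $W$ if necessary, I may assume that any prescribed exceptional divisor $E$ over $Z$ is realised as a prime divisor on $W$; this is harmless, since discrepancies are independent of the chosen model and all the hypotheses are stable under further blow-ups. Writing the two defining identities
\[ K_W=p^*(K_X+\Delta)+\textstyle\sum_E a(E,X,\Delta)\,E,\qquad K_W=q^*(K_{X'}+\Delta')+\textstyle\sum_E a(E,X',\Delta')\,E, \]
and subtracting, I obtain
\[ D:=q^*(K_{X'}+\Delta')-p^*(K_X+\Delta)=\sum_E\bigl(a(E,X,\Delta)-a(E,X',\Delta')\bigr)E. \]
Thus the desired inequality $a(E,X,\Delta)\le a(E,X',\Delta')$ is precisely the assertion that $-D$ is effective.

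Next I would verify the two inputs to the negativity lemma. Since $K_{X'}+\Delta'$ is $f'$-nef, its pullback $q^*(K_{X'}+\Delta')$ is $g$-nef; since $-(K_X+\Delta)$ is $f$-nef, the divisor $-p^*(K_X+\Delta)$ is $g$-nef; hence their sum $D$ is $g$-nef. For the pushforward I use $p_*p^*=\id$ and $q_*q^*=\id$ on $\mathbb{R}$-Cartier classes, together with $f_*K_X=K_Z=f'_*K_{X'}$ for the birational morphisms $f,f'$, to get $g_*D=f'_*(K_{X'}+\Delta')-f_*(K_X+\Delta)=f'_*\Delta'-f_*\Delta$, which vanishes by hypothesis (1). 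Applying the negativity lemma \cite[Lemma 2.14]{bhatt2020} to $-D$ (which satisfies $-(-D)=D$ is $g$-nef and $g_*(-D)=0$) yields $-D\ge 0$, that is $a(E,X,\Delta)\le a(E,X',\Delta')$ for every $E$. This proves the first assertion.

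Finally, for the strict inequality I would exploit the extra ampleness. Assume $-(K_X+\Delta)$ is $f$-ample, let $\eta$ be the generic point of $\cent_X(E)$ and $v=f(\eta)$, the generic point of $\cent_Z(E)$, and assume $f$ is not an isomorphism over $v$. After localising $Z$ at $v$ I may assume $E$ is contracted by $g$ to the point $v$, so $E\subseteq g^{-1}(v)$. Because $f$ is not an isomorphism over $v$ there is an $f$-contracted curve $C_0\subseteq f^{-1}(v)\subset X$; lifting it to a curve $\widetilde C\subset g^{-1}(v)\subset W$ with $p_*\widetilde C\ne 0$, I compute, via the projection formula,
\[ (-D)\cdot\widetilde C=(K_X+\Delta)\cdot p_*\widetilde C-(K_{X'}+\Delta')\cdot q_*\widetilde C<0, \]
the first term being negative by $f$-ampleness (as $p_*\widetilde C$ is $f$-contracted) and the second nonnegative by $f'$-nefness (as $q_*\widetilde C$ is $f'$-contracted). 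Hence $\widetilde C\subseteq\mathrm{Supp}(-D)$, so $g^{-1}(v)\cap\mathrm{Supp}(-D)\ne\emptyset$. By the fibre form of the negativity lemma, each fibre of $g$ is either contained in or disjoint from $\mathrm{Supp}(-D)$, so the whole fibre $g^{-1}(v)$, and with it $E$, lies in $\mathrm{Supp}(-D)$; this gives $a(E,X,\Delta)<a(E,X',\Delta')$. I expect this strict part to be the main obstacle: it requires producing a test curve through the centre of $E$ that is contracted by $g$ but not by $p$, so that $f$-ampleness actually bites, and then upgrading ``$-D$ is nonzero somewhere on the fibre'' to ``$E$ itself occurs in $-D$'', for which the connectedness/fibre refinement of the negativity lemma is the clean tool.
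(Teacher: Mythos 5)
Your argument is correct and follows essentially the same route as the paper: pass to a common resolution, note that the difference of the two pullbacks is nef over $Z$ and has trivial pushforward, and apply the negativity lemma, with the strict inequality coming from its fibre-wise refinement over $\cent_Z(E)$. The only difference is that you make explicit, via a test curve contracted by $f$ but not by $p$, why $f$-ampleness forces the difference to be numerically nontrivial over $\cent_Z(E)$ --- a verification the paper's proof leaves implicit in its final sentence.
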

	\begin{proof}
		Let $E$ be an exceptional divisor over $Z$ and let $W$ be a normal scheme with projective birational morphisms $g \colon W \to X$ and $g' \colon W \to X'$ such that $\textup{centre}_W(E)$ is divisorial. Denote by $\pi:= f \circ g=f' \circ g'$. Let $E_{i}$ be the $\pi$-exceptional divisors. By hypothesis, the following divisor
		$$\Gamma:= \sum_i ( a(E_i, X, \Delta)-a(E_i, X', \Delta'))E_i $$ 
		is $\pi$-nef and it is a sum of $\pi$-exceptional divisors.
		Therefore, by the negativity lemma (\cite[Lemma 2.14]{bhatt2020}), we have $-\Gamma\geq 0$. Finally, if $\Gamma$ is not numerically $\pi$-trivial over $\cent_Z(E)$, we conclude the strict inequality.
	\end{proof}
	
	As we will need to impose some conditions on the base loci of log canonical divisors and non-canonical centres for our applications to invariance of plurigenera (see \autoref{ex-kawamata}), we study the behaviour of the diminished base locus $\mathbf{B}_{-}(K_X+\Delta)$ under the steps of the MMP.

	\begin{lemma}\label{l-stable-base-loci}
		Let $(X,\Delta)$ be a klt pair, projective over a quasi-projective $R$-scheme $T$.
		Let $f\colon X \dashrightarrow Y$ be a step of a  $(K_X+\Delta)$-MMP over $T$ and write $\Delta_Y=f_*\Delta$.
		Let
		\begin{equation*}
			\xymatrix{
				& W \ar[dr]^q \ar[dl]_p & \\
				X \ar@{-->}[rr]^{f} & & Y}
		\end{equation*} 
		be a resolution of indeterminacies of $f$.
		Then $q^{-1}\mathbf{B}_{-}(K_Y+\Delta_Y) \subset p^{-1}\mathbf{B}_{-}(K_X+\Delta).$
	\end{lemma}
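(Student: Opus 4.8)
The plan is to move everything to the common resolution $W$ and then compare the two divisors through their asymptotic orders of vanishing, where the effective correction term produced by a negative contraction pushes the base locus in the right direction. First I would record the relation between $p^{*}(K_X+\Delta)$ and $q^{*}(K_Y+\Delta_Y)$ on $W$. A step of a $(K_X+\Delta)$-MMP is either a divisorial contraction or a flip, and in both cases it factors through the contraction $\varphi\colon X\to Z$ of the relevant $(K_X+\Delta)$-negative extremal ray, with $-(K_X+\Delta)$ being $\varphi$-ample and $K_Y+\Delta_Y$ nef over $Z$; moreover $\Delta_Y=f_{*}\Delta$, so $\Delta$ and $\Delta_Y$ have the same pushforward to $Z$. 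Comparing the discrepancy formulae for $p$ and $q$ on $W$ and applying \autoref{l:increase-discr} to the diagram $X\to Z\leftarrow Y$, the discrepancies satisfy $a(E_i,X,\Delta)\le a(E_i,Y,\Delta_Y)$ for every divisor $E_i$ over $Z$, and hence
\[
p^{*}(K_X+\Delta)=q^{*}(K_Y+\Delta_Y)+E,\qquad E:=\sum_i\big(a(E_i,Y,\Delta_Y)-a(E_i,X,\Delta)\big)E_i .
\]
Here $E\ge 0$ is effective and, since the two boundaries agree along every divisor that is not contracted, $E$ is supported on the $q$-exceptional locus.

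Next I would pass to asymptotic invariants. For a divisorial valuation $\Gamma$ over $W$ let $\sigma_{\Gamma}(-)$ denote the asymptotic order of vanishing along $\Gamma$, defined over our base by the usual asymptotic linear systems. Because $E$ is a \emph{fixed} effective divisor, $\sigma_{\Gamma}$ is additive against it, so
\[
\sigma_{\Gamma}\big(p^{*}(K_X+\Delta)\big)=\sigma_{\Gamma}\big(q^{*}(K_Y+\Delta_Y)\big)+\operatorname{ord}_{\Gamma}(E)\ \ge\ \sigma_{\Gamma}\big(q^{*}(K_Y+\Delta_Y)\big).
\]
As $\sigma_{\Gamma}$ depends only on the numerical class and the valuation, not on the chosen model, this reads $\sigma_{\Gamma}(K_X+\Delta)\ge \sigma_{\Gamma}(K_Y+\Delta_Y)$ for every $\Gamma$.

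To conclude I would use that a point of a normal projective $W/T$ lies in $\mathbf{B}_{-}(L)$ precisely when it lies on the centre $\cent_W(\Gamma)$ of some divisorial valuation $\Gamma$ with $\sigma_{\Gamma}(L)>0$, together with the birational invariance $\mathbf{B}_{-}(p^{*}(K_X+\Delta))=p^{-1}\mathbf{B}_{-}(K_X+\Delta)$ and the analogue for $q$. Applying the displayed inequality on $W$, every valuation with $\sigma_{\Gamma}(q^{*}(K_Y+\Delta_Y))>0$ also has $\sigma_{\Gamma}(p^{*}(K_X+\Delta))>0$, so the corresponding centres on $W$ are nested, giving $\mathbf{B}_{-}(q^{*}(K_Y+\Delta_Y))\subseteq \mathbf{B}_{-}(p^{*}(K_X+\Delta))$; unwinding the birational invariance is exactly the asserted inclusion $q^{-1}\mathbf{B}_{-}(K_Y+\Delta_Y)\subseteq p^{-1}\mathbf{B}_{-}(K_X+\Delta)$.

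The main obstacle is that one cannot argue by the elementary monotonicity of the diminished base locus: adding the effective divisor $E$ to $q^{*}(K_Y+\Delta_Y)$ would, by the usual ``multiply by the canonical section'' estimate, \emph{shrink} $\mathbf{B}_{-}$ up to $\operatorname{Supp}(E)$, which is the opposite of what is needed. The correct direction only becomes visible after passing to the asymptotic order functions, where the fixed part $E$ contributes positively to the vanishing. The real work is therefore to have the $\sigma$-formalism and the ``centres of $\sigma$-positive valuations'' description of $\mathbf{B}_{-}$ available over the excellent, possibly mixed-characteristic base $T$; since these rely only on asymptotic linear systems and not on any vanishing theorem, adapting the classical characteristic-$0$ arguments should be routine.
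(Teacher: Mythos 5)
Your first step agrees with the paper's: the negativity lemma (equivalently, \autoref{l:increase-discr}) gives $p^{*}(K_X+\Delta)=q^{*}(K_Y+\Delta_Y)+E$ with $E\ge 0$ supported on $q$-exceptional divisors. After that the two arguments diverge, and yours has a genuine gap. The identity $\sigma_{\Gamma}(D+E)=\sigma_{\Gamma}(D)+\operatorname{ord}_{\Gamma}(E)$ is \emph{false} for a general fixed effective divisor $E$: on the blow-up of $\mathbb{P}^2$ at a point, take $D=E_0$ the exceptional curve and $E=\ell$ the strict transform of a line through the point; then $D+E\sim H$ is base-point free, so $\sigma_{E_0}(D+E)=0$, while $\sigma_{E_0}(D)+\operatorname{ord}_{E_0}(E)=1$. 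Even the weaker inequality $\sigma_{\Gamma}(D+E)\ge\sigma_{\Gamma}(D)$ that you actually use fails in this example. What saves the argument in the situation at hand is not that $E$ is ``fixed'' but that it is $q$-exceptional and the other summand is a pull-back: by the negativity lemma every effective divisor in the class of $q^{*}L+mE$ has the form $q^{*}(\text{effective})+mE$, and this is what forces $\operatorname{Supp}(E)$, together with $q^{-1}$ of the base locus downstairs, into the base locus upstairs. This has to be stated and proved; it is exactly the content of the paper's containment of stable base loci, and your closing paragraph, which dismisses the ``elementary'' route as going the wrong way, misdiagnoses the situation --- monotonicity of base loci under adding an effective divisor does hold when that divisor is exceptional over the model carrying the sections, and that is precisely what the paper exploits.

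The second, larger gap is the machinery you invoke to finish: the valuative description $\mathbf{B}_{-}(L)=\bigcup\{\cent_W(\Gamma):\sigma_{\Gamma}(L)>0\}$ and the model-independence of $\sigma_{\Gamma}$ are characteristic-$0$ results (and even there they are typically proved for big divisors or via Nakayama's numerical $\sigma$-functions); here $K_Y+\Delta_Y$ need not be big, the base is an excellent mixed-characteristic scheme, and everything is relative over $T$. Calling this adaptation ``routine'' is not justified, and it is a far heavier import than what is needed. The paper instead stays entirely at the level of stable base loci: it uses only $\mathbf{B}_{-}(D)=\bigcup_{n}\mathbf{SB}(D+\tfrac{1}{n}H)$, the observation that a $(K_X+\Delta)$-MMP step is also a $(K_X+\Delta+\tfrac{1}{n}A)$-MMP step for $A$ ample and $n\gg 0$, and the decomposition $f_{*}A\sim_{\mathbb{Q}}H+E'$ with $H$ ample on $Y$ to transport the perturbed stable base loci across $f$. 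If you repair the additivity step as indicated, your $\sigma$-argument can be made to work in characteristic $0$, but in the present setting you should either supply the missing foundations or switch to the elementary perturbation argument.
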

	
	\begin{proof}
		By the negativity lemma, $p^*(K_X+\Delta)=q^*(K_Y+\Delta_Y)+G,$ where $G \geq 0$ and therefore we clearly have the following containment of stable base loci: $q^{-1}\mathbf{SB}(K_Y+\Delta_Y) \subset p^{-1}\mathbf{SB}(K_X+\Delta).$
		Similarly, note that for every sufficiently small ample $A$ on $X$,
		a $(K_X+\Delta)$-MMP step is a $(K_X+\Delta+A)$-MMP step. As $A$ is ample and $f$ birational, we can write $f_*A \sim_{\mathbb{Q}} H+E$, where $H$ is ample and $E$ effective. 
		Therefore  $q^{-1}\mathbf{SB}(K_Y+\Delta_Y+\frac{1}{n}H) \subset q^{-1}\mathbf{SB}(K_Y+\Delta_Y+ \frac{1}{n}f_*A)\subset  p^{-1}\mathbf{SB}(K_X+\Delta+ \frac{1}{n}A) $.
		As $\mathbf{B}_{-}(K_Y+\Delta_Y)=\bigcup_{n \geq 0} \mathbf{SB}(K_Y+\Delta_Y+\frac{1}{n}H)$ by \cite[Proposition 1.19]{asympt-baseloci} we conclude.
	\end{proof}

	Recall that, given a pair $(X,\Delta)$, a \emph{non-canonical centre} $V$ of $(X,\Delta)$ is the centre of a divisorial valuation $E$ with discrepancy $a(E, X, \Delta)<0$.  The following is a generalisation of \cite[Lemma 3.1]{HMX18} for arithmetic threefolds.
	
	\begin{proposition}\label{lemma:MMP_in_fam2}
% 		Let $R$ be an excellent DVR with residue field $k$ of characteristic $>5$. 
        Let $X \to \Spec(R)$ be a projective contraction and suppose that $(X,B)$ is a $\bQ$-factorial klt threefold pair with $\mathbb{Q}$-boundary.
		Suppose the following conditions are satisfied:
		\begin{itemize}
		\item[(1)] $(X,X_k+B)$ is plt;
		\item[(2)] if $V$ is a non-canonical centre of $(X,X_k+B)$ contained in ${\mathbf{B}_{-}(K_{X}+B)}$, then $\dim (V_{k})=\dim (V) -1$.
		\end{itemize}
		Let $f \colon X\dashrightarrow Y$ be a step of a $(K_X+B)$-MMP over $R$. Then:
		\begin{enumerate}
			\item  if $f$ is a contraction of fibre type, then so is $f_k$;
			\item if $f$ is birational, then:
			\subitem(i) $f$ is a divisorial contraction;
			\subitem(ii) if $\Gamma:=f_\ast B$, then conditions (1) and (2) also hold for $(Y,Y_k+\Gamma)$.
		\end{enumerate} 
		In particular, if $f$ is a birational morphism then $h^0(X_t,m(K_{X_t}+B_t))=h^0(Y_t,m(K_{Y_t}+\Gamma_t))$ for all $t\in\Spec (R)$ and all $m\geq 0$ sufficiently divisible.
	\end{proposition}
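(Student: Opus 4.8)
The plan is to exploit that, over the DVR $R$, the special fibre $X_k=\operatorname{div}(t)$ is principal, so $X_k\sim 0$ and hence $K_X+B\sim K_X+X_k+B$. Consequently a $(K_X+B)$-MMP step over $R$ is the same as a $(K_X+X_k+B)$-MMP step, and the contracted extremal ray is $(K_X+X_k+B)$-negative. By \autoref{invAdj3} the fibre $X_k$ is normal and $(X_k,B_k)$ is klt, and adjunction gives $(K_X+X_k+B)|_{X_k}=K_{X_k}+B_k$. Writing $\phi\colon X\to W$ for the contraction of the ray (so $W=Y$ in the fibre/divisorial case and $W=Z$ the base of the flip in the flipping case), the divisor $-(K_{X_k}+B_k)$ is $\phi_k$-ample because the ray is $(K_X+B)$-negative. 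I would feed this into \autoref{invAdj2}: it yields that $W_k$ is normal, that $\phi_k$ is a contraction, and --- when $\phi$ is birational --- that $(W,W_k+\phi_\ast B)$ is plt with $(W_k,(\phi_\ast B)_k)$ klt. This is the engine that lets every step be read off on the special fibre.

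I first record a dichotomy used throughout: since a proper curve contracted over $R$ maps to a point of $\Spec(R)$, every contracted curve lies in $X_k$ or in the generic fibre $X_K$. If a contracted curve lay in $X_K$, then (as $X_K$ is a normal surface, on which contracting a curve is divisorial) its closure would be a horizontal prime divisor of $X$ contracted by $\phi$, which is impossible when $\phi$ is small. For part (1), suppose $f\colon X\to Y$ is of fibre type, so $\dim Y<\dim X=3$; as $Y$ is integral and dominates the DVR it is flat over $R$, whence $\dim Y_k=\dim Y-1\le 1<2=\dim X_k$. Combined with $f_{k,\ast}\mathcal{O}_{X_k}=\mathcal{O}_{Y_k}$ from \autoref{invAdj2} (here $-(K_{X_k}+B_k)$ is $f_k$-ample), this shows $f_k$ is of fibre type.

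The heart of the argument, and the step I expect to be hardest, is ruling out flips in part (2)(i). Suppose $\phi\colon X\to Z$ is a flipping contraction; by the dichotomy the flipping curves lie in $X_k$, so $\phi_k\colon X_k\to Z_k$ is a divisorial contraction of the klt surface $X_k$ with $-(K_{X_k}+B_k)$ ample over $Z_k$ and $(Z_k,D_k)$ klt, where $D=\phi_\ast B$. Since the flip is a $(K_X+X_k+B)$-MMP step, \autoref{l:increase-discr} shows discrepancies only increase, so $(Y,Y_k+\Gamma)$ remains plt; then $Y_k$ is normal and klt by \autoref{invAdj3}, and $\psi_k\colon Y_k\to Z_k$ is a nontrivial divisorial contraction, say with exceptional curve $C^{+}\subset Y_k$, with $K_{Y_k}+\Gamma_k$ ample over $Z_k$. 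Applying \autoref{l:increase-discr} to $X_k\dashrightarrow Y_k$ over $Z_k$ with the divisor $C^{+}$ (whose centre $q\in Z_k$ is not an isomorphism locus of $\phi_k$) gives $a(C^{+},X_k,B_k)<a(C^{+},Y_k,\Gamma_k)=-\operatorname{mult}_{C^{+}}\Gamma_k\le 0$, so $C^{+}$ is a non-canonical divisor of the klt surface $(X_k,B_k)$. I would then lift $C^{+}$ to a divisorial valuation $E$ over the threefold $X$ with $\cent_X(E)\subseteq X_k$ and, via adjunction along the Cartier divisor $X_k=\{t=0\}$, with $a(E,X,X_k+B)=a(C^{+},X_k,B_k)<0$ (equivalently $a(E,X,B)=\operatorname{ord}_E(t)+a(C^{+},X_k,B_k)$ with $\operatorname{ord}_E(t)\ge 1$). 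Thus $\cent_X(E)$ is a non-canonical centre of $(X,X_k+B)$ lying in the flipping locus, hence in $\mathbf{B}_{-}(K_X+B)$, and it is vertical; this contradicts hypothesis (2). The delicate point is precisely this adjunction of discrepancies producing $E$ from $C^{+}$ with $\cent_X(E)\subseteq X_k$ and the controlled value of $\operatorname{ord}_E(t)$.

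Finally, for (2)(ii) I would argue that plt-ness of $(Y,Y_k+\Gamma)$ is inherited from \autoref{l:increase-discr} (the divisor contracted by $f$ is horizontal, so $Y_k$ survives as a plt centre of coefficient one), and that hypothesis (2) descends using \autoref{l-stable-base-loci}, which relates $\mathbf{B}_{-}(K_Y+\Gamma)$ to the image of $\mathbf{B}_{-}(K_X+B)$, together with discrepancy monotonicity identifying non-canonical centres of $(Y,Y_k+\Gamma)$ with images of those of $(X,X_k+B)$; a vertical such centre in $\mathbf{B}_{-}(K_Y+\Gamma)$ would pull back to one on $X$, contradicting (2) for $X$. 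For the concluding equality of plurigenera, note a divisorial contraction $f$ is $(K_X+B)$-negative, so $-(K_X+B)$ is $f$-ample and $K_X+B=f^{\ast}(K_Y+\Gamma)+aE$ with $a>0$ and $E$ effective $f$-exceptional; relative vanishing (via \cite[Proposition 3.2]{Tan18} and \cite[Theorem 5.7]{BT22}) then gives $f_\ast\mathcal{O}_X(\lfloor m(K_X+B)\rfloor)=\mathcal{O}_Y(\lfloor m(K_Y+\Gamma)\rfloor)$, and the same holds fibrewise because $f_k$ is again a contraction with the analogous vanishing by \autoref{invAdj2}. Restricting to each $X_t$ yields $h^0(X_t,m(K_{X_t}+B_t))=h^0(Y_t,m(K_{Y_t}+\Gamma_t))$ for all sufficiently divisible $m$.
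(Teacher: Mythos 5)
Your proof follows essentially the same route as the paper: flips are excluded by comparing discrepancies of the flipped curve on the special fibres via \autoref{l:increase-discr} and then transferring the resulting negative discrepancy to a vertical non-canonical centre of $(X,X_k+B)$ contained in $\mathbf{B}_{-}(K_X+B)$, contradicting hypothesis (2); parts (1), (2)(ii) and the final section count are handled with the same auxiliary lemmas (\autoref{invAdj2}, \autoref{invAdj3}, \autoref{l-stable-base-loci}). The only caveat is that your exact adjunction formula $a(E,X,X_k+B)=a(C^{+},X_k,B_k)$ claims more than is needed (and more than easy adjunction directly provides); the paper only invokes the inequality $\textup{discrep}(P,X,X_k+B)\leq\textup{totaldiscrep}(P,X_k,B_k)<0$ from \cite[Theorem 17.2]{FA}, which suffices since only the sign matters.
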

	
	\begin{proof}
		If $f$ is a contraction of fibre type, hence $f_k$ is not birational by upper semi-continuity of the dimension of the fibres for proper morphisms (\cite[\href{https://stacks.math.columbia.edu/tag/0D4Q}{Tag 0D4Q}]{stacks-project}). 
		
		From now on, we assume that $f$ is birational. Suppose for contradiction that $f$ is a flip and consider the following diagram:
		
		\begin{equation*}
		\xymatrix{
			X \ar@{->}_{g}[rd] \ar@{-->}^{f}[rr]
			&
			& Y \ar@{->}^{g^+}[ld]\\
			&Z,}
		\end{equation*} 
		where $g$ is a $(K_X+B)$-flipping contraction.
		Note that $Y_k$ is irreducible since $f$ does not extract divisors, thus $f_k$ is birational. As $(X,X_k+B)$ is plt, so is $(Y,Y_k+\Gamma)$ hence both $X_k$ and $Y_k$ are normal by \autoref{invAdj3}. 
		
		We now derive the contradiction. Since $f$ is a flip, there exists a prime divisor $D$ on $Y_k$ such that its centre $P$ on $X_k$ is a closed point.
		Since $f_k$ is not an isomorphism at $N$ we have
		$a(D;X_k,B_k)<a(D;Y_k,\Gamma_k)\leq 0$	by \autoref{l:increase-discr}.
		Hence $P$ is a non-canonical centre of $(X_{k},B_k)$. Note that $P \subseteq \textup{Exc}(g) \subseteq \mathbf{B}_{-}(K_{X}+B)$ since $D$ is exceptional over $Z_k$. 
		Moreover $P$ is also a non-canonical centre of $(X,B+X_{k})$ as $$0 > \textup{totaldiscrep}(P, X_k, B_k) \geq \textup{discrep}(P, X, X_k+B), $$ by easy adjunction (\cite[Theorem 17.2]{FA}).
		So $P$ is an isolated non-canonical centre of $(X,X_k+B)$ contained in $\mathbf{B}_{-}(K_{X}+B)$, thus contradicting (2).
		
		Thus $f$, and therefore $f_k$, is a divisorial birational projective contraction. Condition (1) holds on $(Y,Y_k+\Gamma)$ immediately, so it remains to check condition (2).
		
		Suppose that $V$ is a non-canonical centre of $(Y,Y_{k}+\Gamma)$ and take a model $Z$ dominating $X$ and $Y$, and containing an exceptional divisor $E$ such that $V=\cent_Y(E)$ and $a(E, Y,Y_{k}+\Gamma) <0$. Then by \autoref{l:increase-discr} it must be that $a(E,X,X_k+B) \leq a(E,Y,Y_{k}+\Gamma) < 0$, hence the image, $W$, of $E$ on $X$ is a non-canonical centre of $(X,X_{k}+B)$. By \autoref{l-stable-base-loci} if $V \subseteq \mathbf{B}_{-}(K_{Y}+\Gamma)$ then we have $W \subseteq \mathbf{B}_{-}(K_{X}+B)$ as well. In which case $W$ is horizontal and hence so is $V$, therefore (2) holds as claimed.

		Since a $(K_X+B)$-MMP over $R$ is a $(K_X+X_k+B)$-MMP, we have that the map $(X_k,B_k) \rightarrow (Y_k, \Gamma_k)$ is a $(K_{X_k}+B_k)$-negative birational contraction and thus $h^0(X_t,m(K_{X_t}+B_t))=h^0(Y_t,m(K_{Y_t}+\Gamma_t))$ for all $t\in\Spec (R)$ and all $m\geq 0$ sufficiently divisible by \autoref{l-stein-invariance}.
	\end{proof}
	
\subsection{Rational polytopes of boundaries}
	
In this section we show how abundance for $\mathbb{R}$-pairs can be deduced from abundance for $\mathbb{Q}$-pairs, following a strategy due to Shokurov.
We recall the definition of the various polytopes we will need.
	
\begin{definition}
	Let  $X$ be a three-dimensional $\mathbb{Q}$-factorial excellent normal integral scheme and let $f \colon X \to T$ be a projective morphism over $R$ such that the image of $X$ in $T$ is positive-dimensional. 
	Fix a $\mathbb{Q}$-divisor $A\geq 0$. Let $V$ be a finite-dimensional, rational affine subspace of $\text{WDiv}_{\mathbb{R}}(X)$ containing no components of $A$.
	We have the following subsets of $\text{WDiv}_\mathbb{R}(X)$:
	\[V_{A}= \{A+B \colon B \in V\};\]		\[\mathcal{L}_{A}(V)=\{\Delta=A+B \in V_{A} \colon (X,\Delta) \textup{ is an lc pair}\};\]
	\[\mathcal{N}_{A}(V)=\{\Delta \in \mathcal{L}_{A}(V) \colon K_{X}+\Delta \textup{ is nef over } T\}.\]
\end{definition}
	
Recall that as long as there is a projective log resolution of $(X,A)$ together with $V$ the set $\mathcal{L}_{A}(V)$ is a rational polytope by the work of Shokurov (\cite{Sho92}), in particular this is true when $\dim (X) \leq 3$. Further if $(X,A+B)$ is klt and $(X,A+B')$ is lc then $(X,A+tB+(1-t)B')$ is klt for any $0 \leq t < 1$, so the set of klt pairs is open in $\mathcal{L}_{A}(V)$. In fact if $\mathcal{L}_{A}(V)$ contains a klt boundary, the entire interior consists of klt boundaries and the same is true for any sub-polytope.
	
The cone theorem, even the slightly weaker form proved in mixed characteristic in \cite{bhatt2020}, implies that $\mathcal{N}_{A}(V)$ is a rational polytope. 
	
\begin{lemma}[{\cite[Proposition 9.31]{bhatt2020}}]\label{neftope} 
	Fix a $\mathbb{Q}$-divisor $A \geq 0$ such that $(X,A)$ is a $\mathbb{Q}$-factorial klt threefold pair.
	Let $f\colon X \to T$ be a projective morphism of quasi-projective $R$-schemes.
	Then $\mathcal{N}_{A}(V)$ is a rational polytope.	
\end{lemma}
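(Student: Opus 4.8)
The plan is to realise $\mathcal{N}_{A}(V)$ as the intersection of the rational polytope $\mathcal{L}_{A}(V)$ with finitely many rational half-spaces, each encoding nonnegativity against a single extremal ray. First I would recall that $\mathcal{L}_{A}(V)$ is itself a compact rational polytope; this is the fact noted just above the statement, which follows from Shokurov's argument and is available here because $\dim(X)\leq 3$ guarantees a projective log resolution of $(X,A)$. Inside this ambient polytope, membership in $\mathcal{N}_{A}(V)$ is governed by the single condition that $(K_X+\Delta)\cdot C\geq 0$ for every curve $C$ contracted by $f$, that is, nonnegativity against every class in the relative cone of curves $\overline{NE}(X/T)$.

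The heart of the argument is to produce a \emph{uniform} finite set of test classes valid for all $\Delta$ simultaneously. By the relative cone theorem for klt pairs available in mixed characteristic (\cite{bhatt2020}), for each fixed $\Delta\in\mathcal{L}_{A}(V)$ the $(K_X+\Delta)$-negative part of $\overline{NE}(X/T)$ is rational polyhedral, spanned by finitely many extremal rays whose lengths are bounded independently of $\Delta$. Since $\mathcal{L}_{A}(V)$ is compact and $A$ is fixed, I would then run a standard compactness and perturbation argument to conclude that only finitely many extremal rays $R_1,\dots,R_N$ can be $(K_X+\Delta)$-negative as $\Delta$ ranges over $\mathcal{L}_{A}(V)$. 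Each $R_i$ is spanned by a rational curve class, so the condition $(K_X+\Delta)\cdot R_i\geq 0$ cuts out a rational half-space in the affine space containing $V_A$.

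Combining these observations gives $\mathcal{N}_{A}(V)=\mathcal{L}_{A}(V)\cap\bigcap_{i=1}^{N}\{\Delta : (K_X+\Delta)\cdot R_i\geq 0\}$, the intersection of a rational polytope with finitely many rational half-spaces, which is again a rational polytope, as desired.

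The main obstacle is precisely the uniform finiteness of the negative extremal rays over the entire compact polytope: one must combine boundedness of the length of extremal rays with the compactness of $\mathcal{L}_{A}(V)$, and this step is exactly where one needs the cone theorem in the (slightly weaker) form established in mixed characteristic. Concretely, this uniform statement is isolated as \cite[Proposition 9.31]{bhatt2020}, so I would invoke that result directly, having first checked that our hypotheses—$(X,A)$ a $\mathbb{Q}$-factorial klt threefold pair and $f\colon X\to T$ a projective morphism of quasi-projective $R$-schemes—match theirs.
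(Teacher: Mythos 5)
Your proposal is correct and matches the paper's treatment: the paper states this lemma as a direct citation of \cite[Proposition 9.31]{bhatt2020} and offers no independent proof, which is exactly where your argument lands after sketching the standard route (compactness of $\mathcal{L}_{A}(V)$ plus uniform finiteness of negative extremal rays from the mixed-characteristic cone theorem). The sketch you give is the expected underlying argument and is consistent with the paper's surrounding remarks, so no further work is needed.
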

	
	We now infer abundance for pairs with $\mathbb{R}$-boundaries given the appropriate results for $\mathbb{Q}$-boundaries.
	
	\begin{proposition}\label{QtoR}
		Let $f \colon X \to T$ be a projective $R$-morphism of quasi-projective $R$-schemes such that $X$ is a $\mathbb{Q}$-factorial klt threefold and the image of $X$ in $T$ is positive-dimensional. 
		Suppose that for every $\mathbb{Q}$-divisor such that $(X,B)$ is klt and $K_{X}+B$ is $f$-nef, the divisor $K_{X}+B$ is $f$-semiample.
		Then $K_X+\Delta$ is $f$-semiample for any $\mathbb{R}$-divisor $\Delta$ such that $(X, \Delta)$ is klt and $K_{X}+\Delta$ is f-nef.
	\end{proposition}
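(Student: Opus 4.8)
The plan is to run Shokurov's polytope argument, reducing the $\mathbb{R}$-coefficient statement to finitely many $\mathbb{Q}$-coefficient instances of the hypothesis. First I would set up the relevant rational polytope. Since $(X,\Delta)$ is klt and $\Delta\geq 0$, the scheme $X$ is itself klt, so taking $A=0$ the pair $(X,A)$ is a $\mathbb{Q}$-factorial klt threefold pair as required by \autoref{neftope}. Let $D_1,\dots,D_n$ be the prime components of $\Delta$ and let $V\subset\WDiv_{\mathbb{R}}(X)$ be their $\mathbb{R}$-span; this is a finite-dimensional rational affine subspace, and since a log resolution of $(X,\mathrm{Supp}\,\Delta)$ exists for $\dim X\leq 3$, the sets $\mathcal{L}_{A}(V)$ and $\mathcal{N}_{A}(V)$ are defined. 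By hypothesis $K_X+\Delta$ is $f$-nef and $(X,\Delta)$ is lc, so $\Delta\in\mathcal{N}_{A}(V)$, and by \autoref{neftope} the set $\mathcal{N}_{A}(V)$ is a rational polytope.

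Next I would approximate $\Delta$ by rational klt nef boundaries. The pair $(X,\Delta)$ is klt, and since the klt locus is open in $\mathcal{L}_{A}(V)$ there is a neighbourhood $U$ of $\Delta$ in $\mathcal{N}_{A}(V)$ consisting of klt boundaries; shrinking $U$, every element of $U$ is an effective boundary because the coefficients of $\Delta$ lie in $(0,1)$. Let $F$ be the unique face of the rational polytope $\mathcal{N}_{A}(V)$ whose relative interior contains $\Delta$. Then $F$ is itself a rational polytope, $\Delta$ lies in its relative interior, and I can choose a rational simplex $\sigma\subset F\cap U$ of dimension $\dim F$ containing $\Delta$ in its relative interior. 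Writing $\Delta_1,\dots,\Delta_m$ for the (rational) vertices of $\sigma$, I obtain real numbers $t_j>0$ with $\sum_j t_j=1$ and $\Delta=\sum_j t_j\Delta_j$, where each $\Delta_j$ is a $\mathbb{Q}$-boundary such that $(X,\Delta_j)$ is klt and $K_X+\Delta_j$ is $f$-nef.

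Finally I would assemble semiampleness. By $\mathbb{Q}$-factoriality each $K_X+\Delta_j$ is $\mathbb{Q}$-Cartier, and by the standing hypothesis applied to the $\mathbb{Q}$-boundary $\Delta_j$ it is $f$-semiample; hence some positive multiple $L_j:=m_j(K_X+\Delta_j)$ is a semiample Cartier divisor. Since $\sum_j t_j=1$ we have the $\mathbb{R}$-linear equivalence $K_X+\Delta=\sum_j t_j(K_X+\Delta_j)\sim_{\mathbb{R}}\sum_j \tfrac{t_j}{m_j}L_j$ with $\tfrac{t_j}{m_j}>0$, which is exactly the defining condition for $K_X+\Delta$ to be $f$-semiample as an $\mathbb{R}$-Cartier divisor.

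The main obstacle is not the final linear-algebra step but securing the two inputs that make it work: the rationality of the nef polytope $\mathcal{N}_{A}(V)$, which in mixed characteristic rests on the cone theorem of \cite{bhatt2020} packaged in \autoref{neftope}, and the openness of the klt condition on $\mathcal{L}_{A}(V)$, which guarantees that the rational approximants $\Delta_j$ can be taken simultaneously klt, effective, and $f$-nef. Once these are available, the approximation of a point in the relative interior of a face by rational points of that face is routine convex geometry.
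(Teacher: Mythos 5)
Your proposal is correct and follows essentially the same route as the paper: both use the rationality of the nef polytope $\mathcal{N}_{0}(V)$ from \autoref{neftope} together with openness of the klt condition to write $\Delta$ as a convex combination of rational klt $f$-nef boundaries, and then conclude by the defining property of semiample $\mathbb{R}$-divisors. The only difference is that you spell out the convex-geometry step (face and rational simplex) that the paper leaves implicit.
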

	\begin{proof}
		We can write $\Delta= \sum_{i=1}^{n} t_{i}B_{i}$,  where $B_i$ are all distinct prime divisors on $X$ and $t_i \in \mathbb{R}_{\geq 0}$.
		Let $V$ be the $\mathbb{R}$-linear span of $B_i$ in $\text{WDiv}_\mathbb{R}(X)$. By \autoref{neftope} we have that $\mathcal{N}_{0}(V)$ is a rational polytope. Hence there are rational boundaries $D_{i} \in \mathcal{N}_{0}(V)$ such that $\Delta=\sum_i \lambda_{i} D_{i}$ where $\sum_i \lambda_{i} =1$. Since $(X,\Delta)$ is klt, by choosing $D_{i}$ sufficiently close to $\Delta$ we may suppose that each $(X,D_{i})$ is a klt pair with $\mathbb{Q}$-boundary and $K_X+D_i$ $f$-nef. 
		By assumption $K_{X}+D_{i}$ is $f$-semiample and thus so is $K_{X}+\Delta=\sum_i \lambda_{i} (K_{X}+D_{i})$.
	\end{proof}
	
	\section{Abundance for mixed characteristic threefolds}
	
	Given a log canonical pair $(X,\Delta)$ with a projective $R$-morphism $f \colon X \to T$ so that $K_{X}+\Delta$ is $f$-nef, then the abundance conjecture asserts that $K_{X}+\Delta$ is $f$-semiample. 
	In the case where $(X,\Delta)$ is a klt threefold pair and $K_{X}+\Delta$ (or even just $\Delta$) is big this is immediate by \autoref{t-bpf}. 
	We address the remaining cases in this section.
	
	The starting point of our proof is the abundance theorem for surfaces over excellent bases, which we now recall. 
	
	\begin{theorem}\label{abundance-dim2}
		Let $\pi \colon S \to T$ be a projective surjective morphism of normal quasi-projective schemes over $R$, 
		where  $(S,B)$ is a klt pair of dimension 2.  
		If $K_{S}+B$ is a $\pi$-nef $\mathbb{Q}$-Cartier $\mathbb{Q}$-divisor, then it is $\pi$-semiample.
	\end{theorem}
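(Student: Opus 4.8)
The plan is to deduce the relative statement from the absolute abundance theorem for klt surfaces over a field, which holds in the required generality by \cite{fujino2012log, Tan14, tanaka2020abundance}, and then to propagate semiampleness from the fibres of $\pi$ to the total space. I would open with two harmless reductions. Replacing $T$ by the Stein factorisation of $\pi$, I may assume $\pi$ is a contraction, since factoring through the finite part does not affect $\pi$-semiampleness. Moreover, passing to a $\mathbb{Q}$-factorialisation of $(S,B)$ and descending semiampleness along \autoref{pullback}, I may assume $S$ is $\mathbb{Q}$-factorial. Write $L:=K_S+B$, let $d$ be the relative dimension of $\pi$ (so $0\le d\le 2$), and let $\nu$ denote the numerical dimension of $L$ over $T$.

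The argument then splits on $\nu$. If $L$ is $\pi$-big, that is $\nu=d$, the conclusion follows from the base-point-free theorem for $\mathbb{Q}$-factorial klt surfaces, the two-dimensional counterpart of \autoref{t-bpf} contained in the surface MMP of \cite{Tan18}; so all the content lies in the range $\nu<d$. For the fibral analysis one restricts $L$ to each fibre $S_t$ over $t\in T$, where $(S_t,B_t)$ is a klt surface, curve, or point over the (possibly imperfect) residue field $\kappa(t)$, with $L_t=K_{S_t}+B_t$, and applies absolute surface abundance. The two outstanding cases are $\nu=1$, where the Iitaka fibration $S_t\to C_t$ realises $L_t\sim_{\mathbb{Q}}$ the pullback of an ample divisor from a curve, and $\nu=0$, where $L_t\equiv 0$ and abundance forces $L_t\sim_{\mathbb{Q}}0$.

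The step I expect to be the main obstacle is precisely the passage from this fibrewise information to \emph{relative} semiampleness over $\Spec R$, since a $\pi$-nef divisor that is fibrewise semiample need not be $\pi$-semiample: on curve fibres a degree-zero class is semiample exactly when it is torsion. Thus in the $\nu=0$ regime one must upgrade $L\equiv_{\pi}0$ to genuine relative triviality $L\sim_{\mathbb{Q}}0$ over $T$, and in the $\nu=1$ regime one must glue the fibral Iitaka fibrations into a single contraction $S\to Z$ over $T$ with $L\sim_{\mathbb{Q}}$ the pullback of a relatively ample divisor. The delicate point in mixed characteristic is compatibility across the characteristic-$0$ generic fibre and the characteristic-$p$ special fibres: one must bound the torsion order, respectively control the fibration, uniformly over $\Spec R$, recognise the resulting morphism as a semiample contraction via \autoref{t-semiamplecontraction}, and descend along \autoref{pullback}. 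Since abundance for surfaces over excellent bases is exactly the input being recorded here, in practice I would carry out only the reductions above and cite \cite{Tan14, tanaka2020abundance} for the fibral engine.
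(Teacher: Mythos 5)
There is a genuine gap: you correctly identify the hard step --- upgrading fibrewise (or generic-fibre) semiampleness to $\pi$-semiampleness --- but then explicitly decline to carry it out, and the sources you cite (\cite{fujino2012log, Tan14, tanaka2020abundance}) only treat the absolute case where $T$ is the spectrum of a field. Since that globalisation step is the entire content of the relative statement, the proof is incomplete as written. The paper closes exactly this gap with \autoref{lemma:EDsemiampleness}: in the only genuinely relative non-big case one has $\dim T=1$, the generic fibre $S_{K(T)}$ is a klt \emph{curve} pair on which the nef non-big divisor $K_S+B$ restricts to a $\mathbb{Q}$-linearly trivial divisor by curve abundance, and then, because $T$ is regular of dimension one and $\pi$ is equi-dimensional, one writes $K_S+B\sim_{\mathbb{Q},T}D\geq 0$ with $D$ supported on fibres and shows by a connectedness/negativity argument that $D$ is a rational multiple of a full fibre, whence $K_S+B\sim_{\mathbb{Q},T}0$.

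A secondary issue is that your case analysis does not match the geometry. If $\dim T>0$ and $K_S+B$ is not $\pi$-big, then (after Stein factorisation) $T$ is a curve and the fibres of $\pi$ are curves, so the scenario you flag as the delicate one --- gluing Iitaka fibrations $S_t\to C_t$ of surface fibres into a contraction $S\to Z$ over $T$, with $\nu=1$ and $d=2$ --- simply does not occur: $d=2$ forces $T$ to be a point, which is the case you dispose of by citation. The reductions you do perform (Stein factorisation, $\mathbb{Q}$-factorialisation via \autoref{pullback}, and the big case via the surface base-point-free theorem) are fine and agree with the paper, but the remaining case needs the argument above rather than a uniform bound on torsion orders across characteristics.
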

	
	\begin{proof}	
		If $T$ is a field then this is \cite[Theorem 1.2]{fujino2012log} for perfect fields and \cite{tanaka2020abundance} for imperfect fields. 
		Suppose from now on that $\dim (T) > 0$.
		If $K_{S}+B$ is big over $T$ then this follows immediately from the base-point-free theorem (\cite[Theorem 4.2]{Tan18}) with $D=2(K_{S}+B)$. Hence we may suppose that $\dim (T)=1$ and $K_{S}+B$ is not big. In this case we have $(K_{S}+B)|_{S_{K(T)}} \sim_{\mathbb{Q}} 0$ by the abundance theorem for curves (\cite[Lemma 9.22]{bhatt2020}) and the result follows by \autoref{lemma:EDsemiampleness}.
	\end{proof}
	
	The following is \cite[Lemma 2.17]{cascini2020relative}. We include the proof for completeness as the result is used often.

	\begin{lemma}\label{lemma:EDsemiampleness}
		Let $f\colon X \to Y$ be a contraction of integral, normal and excellent schemes. Suppose $L$ is an $f$-nef $\mathbb{Q}$-Cartier $\bQ$-divisor with $L|_{X_{K(Y)}} \sim_{\mathbb{Q}} 0$. If $Y$ is $\mathbb{Q}$-factorial and $f$ is equi-dimensional then $L \sim_{Y,\mathbb{Q}} 0$.
	\end{lemma}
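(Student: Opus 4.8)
The statement asserts that if $f\colon X\to Y$ is an equi-dimensional contraction of normal excellent schemes with $Y$ being $\mathbb{Q}$-factorial, and $L$ is $f$-nef with $L|_{X_{K(Y)}}\sim_{\mathbb{Q}}0$, then $L\sim_{Y,\mathbb{Q}}0$ (i.e. $L$ is $\mathbb{Q}$-linearly trivial over $Y$, so $L\sim_{\mathbb{Q}}f^{*}M$ for some $\mathbb{Q}$-Cartier $\mathbb{Q}$-divisor $M$ on $Y$).

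\emph{Plan.} The vanishing $L|_{X_{K(Y)}}\sim_{\mathbb{Q}}0$ on the generic fibre says precisely that $L$ is $\mathbb{Q}$-linearly equivalent to a divisor supported on fibres of $f$ over a proper closed subset of $Y$. First I would spread out: choose $m>0$ and a rational function $\varphi$ on $X_{K(Y)}$ with $m L|_{X_{K(Y)}}=\operatorname{div}(\varphi)$, view $\varphi$ as a rational function on $X$, and write $mL=\operatorname{div}_X(\varphi)+D$, where $D$ is an effective-minus-effective (i.e.\ $\mathbb{Z}$-linear combination) divisor whose support maps into a proper closed subset $W\subsetneq Y$. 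The content is then to show that this \emph{vertical} divisor $D$ is itself a pullback of a $\mathbb{Q}$-divisor from $Y$, up to $\mathbb{Q}$-linear equivalence.

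\emph{Key step.} Here is where the two hypotheses enter. Because $f$ is equi-dimensional and $Y$ is $\mathbb{Q}$-factorial, every prime divisor $E\subset X$ lying over a prime divisor $P\subset Y$ (a codimension-one point $P\in W$) is a component of the fibre $f^{*}P$, and each such $f^{*}P$ is $\mathbb{Q}$-Cartier (pullback of the $\mathbb{Q}$-Cartier divisor $P$ on $Y$). Equi-dimensionality guarantees that $f$ contracts no divisor to something of codimension $\geq 2$, so every vertical prime divisor of $X$ dominates a prime divisor of $Y$; thus the support of $D$ consists of components of fibres $f^{*}P$ for finitely many codimension-one points $P$. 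Working over the generic point of each such $P$, I would apply the negativity lemma (\cite[Lemma 2.14]{bhatt2020}) to the $f$-nef divisor $L$ restricted to the normal surface/threefold germ over $P$: since $L$ is $f$-nef and numerically trivial on the generic fibre, it is numerically trivial on every fibre, so any vertical $\mathbb{Q}$-divisor $\mathbb{Q}$-linearly equivalent to $L$ along $f^{-1}(P)$ must in fact be a $\mathbb{Q}$-multiple of the whole fibre class $f^{*}P$. Summing these multiplicities over all $P$ produces a $\mathbb{Q}$-divisor $M=\sum_P c_P\,P$ on $Y$ with $L\sim_{\mathbb{Q}}f^{*}M$ over $Y$, which is exactly $L\sim_{Y,\mathbb{Q}}0$.

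\emph{Main obstacle.} The delicate point is the last step: ruling out that $L$ differs from a multiple of $f^{*}P$ by a nontrivial combination of \emph{several} distinct components of a reducible fibre $f^{*}P$. The equi-dimensionality hypothesis is what makes each $f^{*}P$ a genuine $\mathbb{Q}$-Cartier divisor with all components vertical, and $f$-nefness of $L$ together with numerical triviality on fibres forces, via the negativity lemma applied fibrewise over the (excellent, hence well-behaved) local ring at $P$, the coefficients on the distinct components to be forced into a single common multiple of $f^{*}P$. I expect the only real care needed is to set up the negativity/numerical argument correctly in the excellent, possibly mixed-characteristic relative setting, and to handle the passage from $\mathbb{Q}$-linear equivalence on the generic fibre to $\mathbb{Q}$-linear equivalence over $Y$ (rather than merely numerical triviality), which the $\mathbb{Q}$-factoriality of $Y$ supplies by allowing us to absorb the discrepancy divisor as an honest $\mathbb{Q}$-Cartier pullback.
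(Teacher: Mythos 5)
Your overall route is the same as the paper's: use $\mathbb{Q}$-factoriality of $Y$ to reduce to checking the statement after localising at each codimension-one point $P$ of $Y$ (so $Y=\Spec$ of a DVR), use equi-dimensionality to see that every component of the vertical divisor $D$ with $L\sim_{Y,\mathbb{Q}}D$ dominates a prime divisor of $Y$, and then show that $D$ is a rational multiple of the whole fibre $F=f^{*}P$. Up to that point your proposal matches the paper.

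The gap is exactly at the step you yourself flag as the ``main obstacle'': you assert that the coefficients of $D$ on the components of a reducible fibre are forced to be a common multiple of $F$ ``via the negativity lemma applied fibrewise,'' citing \cite[Lemma 2.14]{bhatt2020}. That lemma is a statement about proper \emph{birational} morphisms and does not apply to the fibration $f$; invoking it here does not prove anything, and you never supply a substitute argument. What is actually needed (and what the paper does) is a Zariski-lemma style connectedness argument: write $F=f^{*}P$, set $r:=\min\{t\mid D-tF\le 0\}$, so that $rF-D\ge 0$ and some component $G_{1}$ of $F$ has coefficient zero in $rF-D$. If $rF-D\neq 0$, connectedness of the fibre (here one uses that $f$ is a contraction, i.e.\ $f_{*}\cO_X=\cO_Y$) produces a component $G_{2}$ in the support of $rF-D$ meeting $G_{1}$, hence a curve $C\subset G_{1}$ with $(rF-D)\cdot C>0$; but $F\sim_{Y}0$, so $rF-D\sim_{Y}-D\sim_{Y,\mathbb{Q}}-L$ is anti-nef over $Y$, a contradiction. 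Also note that your claim that ``$L$ is numerically trivial on every fibre'' is not used and is not the mechanism: the argument only needs $f$-nefness of $L$ and the connectedness of $F$. With this intersection-theoretic step supplied, your outline becomes the paper's proof.
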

	
	\begin{proof}
		Since $L|_{X_{K(Y)}} \sim_{\mathbb{Q}} 0$ we may write $L\sim_{Y, \mathbb{Q}} D\geq 0$ such that $D|_{X_{K(Y)}}=0$. If $C$ is any component of $D$ then $f(C)$ is a prime divisor, since $f$ is equi-dimensional. Thus, since $Y$ is $\mathbb{Q}$-factorial, it is enough to know that $L \sim_{\bQ,Y} 0$ after localisation about any codimension one point of $Y$. In particular we may suppose that $Y= \Spec (R)$ for some DVR $R$ with closed point $P$.
		
		Let $\left\{G_i \right\}_{i=1}^n$ be the irreducible components of the special fibre $F=f^*P$, so that by construction $D = \sum_{i=1}^n a_i G_i$ for certain $a_i \geq 0$.	
		
		We introduce $r:= \min \left\{ t \mid D -tF \leq 0 \right\}$. We are left to show that $D-rF=0$. 
		If not, up to rearranging the order of $G_i$, we have $D-rF=-\sum_{i=2}^n l_i G_i \equiv_Y 0$, with $l_2 >0, l_i \geq 0$ and $G_{1}$ meeting $G_{2}$. Note that $(rF-D)$ is effective curve not containing $G_{1}$ but intersecting it. Hence there must be a curve $C$ on $G_{1}$ with $(rF-D) \cdot C >0$, but $rF-D \sim_{T} -D$ and $D$ is nef, a contradiction. Therefore $D-rF=0$ as claimed.
	\end{proof}

	The following gives a sufficient condition for a nef divisor to be EWM together with a a well-behaved birational modification (cf.~ \cite[Lemma 9.25]{bhatt2020}).
	
\begin{lemma}\label{two}
	Let $X \to T$ be a projective contraction of normal, integral, quasi-projective $R$-schemes.
	Let $L$ be a $\mathbb{Q}$-Cartier $\mathbb{Q}$-divisor on $X$, nef over $T$ such that $L|_{X_{K(T)}}$ and $L|_{X_{\mathbb{Q}}}$ are semiample.
	Assume $\dim (X) \leq 3$ and $L$ is not big. 
	Then $L$ is EWM and there is a commutative diagram of proper algebraic spaces over $T$:
	\[
	\begin{tikzcd}
	W \arrow[d, "g"] \arrow[r, "\phi"] & X \arrow[d, "f"] \\
	Y \arrow[r, "\pi"]                 & Z      ,        
	\end{tikzcd}
	\]
	such that 
    \begin{enumerate}
	    \item $f$ is the EWM contraction associated to $L$;
		\item $\phi$ and $\pi$ are proper birational contractions;
		\item  $g$ is equi-dimensional, $W$ is a $T$-projective normal scheme, and $Y$ is a projective regular scheme over $T$ of dimension $\leq 2$;
			\item $g$ agrees with the map induced by $\phi^*L$ over the generic point of $Z$;
			\item  there exists a $\mathbb{Q}$-Cartier $\mathbb{Q}$-divisor $D$ on $Y$ such that $\phi^{*}L \sim_{\mathbb{Q}} g^{*}D$.
	\end{enumerate}  
\end{lemma}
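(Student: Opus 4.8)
The plan is to prove the statement in a logical order opposite to the way the conclusions are listed: rather than producing the EWM contraction $f$ directly, I will first construct the birational model $g\colon W\to Y$ together with the descended divisor $D$, and only then deduce that $L$ is EWM, using the birational invariance of the EWM property (\autoref{pp-EWM}). The point is that the entire content of the lemma is encoded in the single morphism $g$: once $g$ is an equidimensional contraction onto a regular $Y$ of dimension $\le 2$ and $\phi^{\ast}L\sim_{\mathbb{Q}}g^{\ast}D$, all five conclusions follow formally.

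\emph{Construction of $W$ and $Y$.} First I would pass to a resolution $\phi_{0}\colon W_{0}\to X$, available since $\dim X\le 3$ by \cite{CP19,CJS20}. As $L|_{X_{K(T)}}$ is semiample, its semiample contraction $X_{K(T)}\to Z_{K(T)}$ over $K(T)$ spreads out to a dominant rational map from $X$ to a projective $T$-model of $Z_{K(T)}$; resolving its indeterminacy on $W_{0}$, then resolving and normalising the target (possible since it has dimension $\le 2$), yields a morphism $g_{1}\colon W_{1}\to Y_{1}$ with $Y_{1}$ regular. Because $L$, hence $\phi^{\ast}L$, is not big over $T$, the fibration $g_{1}$ is not generically finite, so $\dim Y_{1}\le 2$.

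\emph{Equidimensionality and descent.} The delicate point is to arrange that $g$ be equidimensional. If $\dim Y_{1}=1$ this is automatic, since a dominant morphism from an integral scheme to a regular one-dimensional scheme is flat. If $\dim Y_{1}=2$, I would apply Raynaud--Gruson flattening together with further blow-ups of the regular surface $Y_{1}$, which remain regular, to replace $g_{1}$ by a flat, hence equidimensional, contraction $g\colon W\to Y$ after renormalising $W$; here $\phi\colon W\to X$ stays a birational contraction and $W$ stays $T$-projective and normal. Since $g$ is, over the generic point of $Y$, the map induced by $\phi^{\ast}L$, we have $\phi^{\ast}L|_{W_{K(Y)}}\sim_{\mathbb{Q}}0$; as $Y$ is regular, hence $\mathbb{Q}$-factorial, and $g$ is an equidimensional contraction, \autoref{lemma:EDsemiampleness} produces a $\mathbb{Q}$-Cartier $\mathbb{Q}$-divisor $D$ on $Y$ with $\phi^{\ast}L\sim_{\mathbb{Q}}g^{\ast}D$, establishing conclusions (3), (4) and (5).

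\emph{The EWM property.} Pushing forward curves shows $D$ is nef over $T$; restricting $\phi^{\ast}L\sim_{\mathbb{Q}}g^{\ast}D$ to the generic fibre shows $D|_{Y_{K(T)}}$ is ample, and \autoref{pullback} applied to $g_{\mathbb{Q}}$ shows $D|_{Y_{\mathbb{Q}}}$ is semiample. A nef divisor on a regular scheme of dimension $\le 2$ whose restrictions to the generic and characteristic-zero fibres are semiample is EWM: its exceptional locus is at most a curve, on which it is numerically trivial and hence EWM, so this follows from Keel--Witaszek's theorem \cite{Keel,witaszek2020keels}. Letting $\pi\colon Y\to Z$ be the associated EWM contraction, $\pi$ is birational because $D$ is big over $T$, being ample on the generic fibre. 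Then \autoref{pp-EWM} applied to $g$ shows $\phi^{\ast}L=g^{\ast}D$ is EWM with contraction $\pi\circ g$, and applied to $\phi$ shows $L$ is EWM; its proof, which descends the contraction through $\phi$ via \autoref{as-ext}, produces $f\colon X\to Z$ with $f\circ\phi=\pi\circ g$, and $f$ is by construction the EWM contraction of $L$. This gives (1) and (2) and completes the diagram.

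The main obstacle is the equidimensionalisation step: producing one model $g\colon W\to Y$ that is simultaneously an equidimensional contraction, has regular two-dimensional base, and still carries $\phi^{\ast}L$ as a pullback from $Y$. Everything else is formal given the lemmas already proved, but controlling the fibre dimensions of a map from a threefold to a surface while keeping $Y$ regular, so that \autoref{lemma:EDsemiampleness} applies, is the crux, and is precisely why the statement is restricted to $\dim X\le 3$.
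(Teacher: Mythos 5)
Your overall architecture coincides with the paper's: both arguments reduce everything to producing an equidimensional contraction $g\colon W\to Y$ onto a regular scheme of dimension $\leq 2$ carrying a $\mathbb{Q}$-divisor $D$ with $\phi^{*}L\sim_{\mathbb{Q}}g^{*}D$, then show $D$ is EWM and transport the EWM contraction back through $g$ and $\phi$ via \autoref{pp-EWM} and \autoref{as-ext}. The difference is that the paper imports the entire diagram, with properties (c)--(e) already established, as a black box from \cite[Lemma 9.24]{bhatt2020}, and settles EWM-ness of $D$ by citing \cite[Lemma 2.48]{bhatt2020} when $\dim Y=2$; you instead try to rebuild the diagram from scratch and to prove $D$ is EWM via a Keel--Witaszek-type statement. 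Your descent steps (producing $D$ from \autoref{lemma:EDsemiampleness} on the final model, and pulling the EWM property back through $g$ and $\phi$) are sound and match the intended argument.

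The gap is exactly where you locate the crux, and your proposed fix does not work as stated. Raynaud--Gruson flattening modifies $Y_{1}$ by blowing up a closed subscheme that is in general neither reduced nor regular, and the blow-up of a regular surface along such a centre need not be regular; so the claim that the flattened base ``remains regular'' is unjustified. If you then resolve the new base you may destroy flatness again, and it is not clear that the process terminates while preserving property (d) and the birationality of $W\to X$. (Note that non-equidimensionality of $g_{1}$ can only occur over finitely many closed points of $Y_{1}$: a two-dimensional component of a fibre over a codimension-one point would have three-dimensional closure, hence be all of $W_{1}$, contradicting dominance. So what is actually needed is an elimination-of-indeterminacy statement over finitely many points of a regular surface; this is precisely the content of \cite[Lemma 9.24]{bhatt2020}, which the paper quotes rather than reproves.) Two smaller inaccuracies: $D|_{Y_{K(T)}}$ is big and semiample but not ample in general, since $Y_{K(T)}$ is only birational to the semiample model $Z_{K(T)}$ (bigness is all you use, so this is harmless); and the appeal to ``Keel--Witaszek'' for EWM-ness of $D$ should be replaced by the reference the paper uses, \cite[Lemma 2.48]{bhatt2020}, since Keel's EWM theorem is a positive-characteristic statement and the result quoted from \cite{witaszek2020keels} in this paper is the semiampleness criterion, not an EWM criterion.
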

	
\begin{proof}
    Note that if $\dim(T)=0$ there is nothing to prove, hence we can assume $\dim(T)\geq 1$. By \cite[Lemma 9.24]{bhatt2020} and its proof we can find a diagram of schemes over $T$:
    \[
	\begin{tikzcd}
	W \arrow[d, "g"] \arrow[r, "\phi"] & X  \\
	Y           &      ,
	\end{tikzcd}
	\]
	such that $\phi$ is birational and there exists a $\mathbb{Q}$-Cartier $\mathbb{Q}$-divisor $D$ on $Y$ such that (c)-(e) hold.
    By \autoref{as-ext} and \autoref{pp-EWM} it is sufficient to show that $D$ is EWM to conclude. If $\dim(Y) \leq 1,$ the result is trivial and if $\dim(Y)=2$, we apply \cite[Lemma 2.48]{bhatt2020}.
\end{proof}	

If $f$ is equi-dimensional, it is possible to prove a suitable semiampleness result.
\begin{proposition}\label{EDsemiampleness2}
	Let $X \to T$ be a projective contraction of normal quasi-projective schemes over $R$, where $\dim(X) \leq 3$.
	Let $L$ be an EWM $\mathbb{Q}$-Cartier $\mathbb{Q}$-divisor on $X$ such that its associated EWM contraction $f \colon X \to Z$ is equi-dimensional.
	If $L|_{X_{K(T)}}$ and $L|_{X_{\mathbb{Q}}}$ are semiample, then $L$ is semiample.
	\end{proposition}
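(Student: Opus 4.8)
The plan is to descend $L$ along the equi-dimensional contraction $f\colon X\to Z$ to a nef $\mathbb{Q}$-Cartier $\mathbb{Q}$-divisor $A$ on $Z$ which is big on every integral subscheme, and then to invoke the Keel--Witaszek base-point-free theorem \cite{Keel, witaszek2020keels} on $Z$. Since $L$ is EWM it is nef, and any positive-dimensional subscheme of a fibre of $f$ is contracted, hence has non-big restriction; as $L$ is nef this forces $L$ to be numerically trivial on every fibre of $f$. The upshot is that the only obstruction to semiampleness should live on the base $Z$, where I will have enough positivity to apply Keel--Witaszek. (When $L$ is big the map $f$ is an isomorphism and $\mathbb{E}(L)=\emptyset$, so this uniform strategy degenerates to a direct application of Keel--Witaszek.)

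\emph{Descent.} First I would produce $A$ with $L\sim_{\mathbb{Q}}f^{*}A$. Over the generic point of $Z$ this is immediate, and because $f$ is equi-dimensional the argument of \autoref{lemma:EDsemiampleness}, carried out after localising at each codimension-one point of $Z$ (where the base is regular, hence $\mathbb{Q}$-factorial), upgrades this to a linear equivalence of $\mathbb{Q}$-Cartier divisors; the hypothesis that $L|_{X_{K(T)}}$ be semiample is used to control the behaviour over the generic fibre of $X\to T$. To handle the fact that $Z$ is only an algebraic space, I would use \autoref{l-cod1} to choose a big regular open subscheme $U\subseteq Z$, perform the descent on $f^{-1}(U)\to U$, and then extend $A$ across the codimension $\geq 2$ complement as a Weil, hence $\mathbb{Q}$-Cartier, divisor. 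Nefness of $A$ then follows from that of $L$ by the projection formula.

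\emph{Positivity on $Z$ and conclusion.} Next I would check that $\mathbb{E}(A)=\emptyset$. Given any integral $V\subseteq Z$, equi-dimensionality lets me cut a component of $f^{-1}(V)$ by general hyperplanes to obtain an integral $V'\subseteq X$ with $f(V')=V$ and $\dim V'=\dim V$; then $V'$ is not contracted, so $L|_{V'}$ is big, and \autoref{com-big} gives that $A|_{V}$ is big. Thus $A$ is big on every subscheme of $Z$. Moreover $A|_{Z_{\mathbb{Q}}}$ is semiample: it pulls back under the contraction $f_{\mathbb{Q}}$ to the semiample divisor $L|_{X_{\mathbb{Q}}}$, so \autoref{pullback} applies. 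Since $\mathbb{E}(A)=\emptyset$ and $A|_{Z_{\mathbb{Q}}}$ is semiample, the Keel--Witaszek theorem yields that $A$ is semiample, and therefore $L\sim_{\mathbb{Q}}f^{*}A$ is semiample as well.

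The step I expect to be the main obstacle is the descent itself: producing a genuinely $\mathbb{Q}$-Cartier $A$ on $Z$ when $Z$ is a normal algebraic space that need not be $\mathbb{Q}$-factorial, and correspondingly justifying the application of Keel--Witaszek on $Z$ rather than on a scheme. Both points I would handle by reducing to the big regular scheme locus $U$ via \autoref{l-cod1}: there $A|_{U}$ is nef and big on every subscheme, hence ample by Nakai--Moishezon and in particular semiample, and I would extend the resulting sections of a multiple of $A$ across the codimension $\geq 2$ complement, using equi-dimensionality of $f$ to guarantee that no new components of the relevant divisors are introduced there.
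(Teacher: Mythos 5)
Your overall strategy -- descend $L$ along the equi-dimensional contraction and then invoke Keel--Witaszek -- is the right instinct, and you have correctly identified where the difficulty lies; but the workaround you propose for that difficulty does not close the gap. The problem is that $Z$ is only a normal algebraic space, proper over $T$, with no $\mathbb{Q}$-factoriality assumption. Extending $A$ from the big regular open $U\subseteq Z$ ``as a Weil, hence $\mathbb{Q}$-Cartier, divisor'' is exactly the step that fails: on a normal space that is not $\mathbb{Q}$-factorial a Weil divisor need not be $\mathbb{Q}$-Cartier, and without that, $f^{*}A$ and the relation $L\sim_{\mathbb{Q}}f^{*}A$ are not even defined. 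Your fallback -- that $A|_{U}$ is ``nef and big on every subscheme, hence ample by Nakai--Moishezon'' -- does not make sense either, because $U$ is not proper, and even granting some multiple of $A|_{U}$ plenty of sections, extending them over the codimension $\geq 2$ complement requires a line bundle to extend them \emph{to}, and the extended sections would still have to generate at the points of $Z\setminus U$, which is precisely where all the difficulty is concentrated. A further, independent issue is that the Keel--Witaszek theorem as used in the paper is stated for nef line bundles on a \emph{scheme} projective over $S$, so it cannot be applied on the algebraic space $Z$ directly.

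The paper's proof is engineered to avoid descending to $Z$ at all. It first uses \autoref{two} to produce an auxiliary diagram $g\colon W\to Y$, $\phi\colon W\to X$, where $Y$ is a genuine \emph{regular projective scheme} of dimension $\leq 2$ and $\phi^{*}L\sim_{\mathbb{Q}}g^{*}D$ for a $\mathbb{Q}$-Cartier $D$ on $Y$; this replaces your putative $A$ on $Z$. It then shows $L|_{X_{U}}\sim_{U,\mathbb{Q}}0$ over the big regular open $U\subseteq Z$ via \autoref{lemma:EDsemiampleness} (this part agrees with your localisation at codimension-one points), cuts $X$ by a general hyperplane $S$ meeting the fibres over $Z\setminus U$ in finitely many points, observes that $L|_{S}$ is big with $\mathbb{E}(L|_{S})\subseteq X_{U}$, and applies Keel--Witaszek on the scheme $S$ to get $L|_{S}$ semiample. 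Since the strict transform of $S$ on $W$ dominates $Y$, \autoref{pullback} then gives that $D$ is semiample, and hence so is $L$. You would need some analogue of this detour through $Y$ and the hyperplane section to repair your argument.
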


\begin{proof}
Without loss of generality we may assume $\dim(T)\geq 1$. If $L$ is big and $f$ is equi-dimensional, then $L$ is necessarily ample and we conclude.
We can thus suppose $L$ is not big. We can then apply \autoref{two} and thus there exists a commutative diagram of proper algebraic spaces over $T$
\[\begin{tikzcd}
		W \arrow[d, "g"] \arrow[r, "\phi"] & X \arrow[d, "f"] \\
		Y \arrow[r, "\pi"]             & Z  ,                
		\end{tikzcd}\]
		such that the following hold:
		
		\begin{enumerate}
			\item $W,X,Y$ are normal $T$-projective schemes and $\dim (Y) \leq 2$;
			\item the vertical maps $f$ and $g$ are equi-dimensional;
			\item the horizontal maps $\phi$ and $\pi$ are proper and birational;
			\item there exists a $\mathbb{Q}$-Cartier $\bQ$-divisor $D$ on $Y$ such that $\phi^{*}L \sim_{\mathbb{Q}} g^{*}D$.
		\end{enumerate}
		Since $Z$ is normal, there is an open immersion of a regular scheme $U \to Z$ containing every codimension 1 point of $Z$ by \autoref{l-cod1}.
		By (b), $X_{U} \to U$ satisfies the assumptions of \autoref{lemma:EDsemiampleness} and thus $L|_{X_{U}} \sim_{U,\mathbb{Q}} 0$. 
		
		If $\dim(Z)=1$, then we conclude immediately, so we can suppose $Z$ is a surface. Therefore $Z \setminus U$ consists of finitely many points.  
		Then we may choose $S$ to be a general hyperplane on $X$ such that $S$ meets each fibre over $Z \setminus U$ at only finitely many points. 
		%This may be done inductively by choosing hyperplanes $H_{i}$ such that they do not contain any component of any fibre of $S_{i}=\bigcap_{j< i} H_{j}$ over $Z\setminus U$. Then if $F$ is such a fibre, we have $\dim F\cap S_{i}=\dim F-i=r-i$, so $S=S_{r}$ meets $F$ at finitely many points.
		
		Note that $L|_{S}$ is clearly big and moreover if $C$ is any curve on $S$ with $L \cdot C =0$, then $C$ must be contracted by $X \to Z$ as $f$ is the EWM contraction associated to $L$. In particular $C$ is contained in some fibre of $f$ and by construction $C$ is not contained in a fibre over $Z \setminus U$, as $S$ contains no such curves. Thus in fact $C \subseteq X_{U}$. Therefore $\mathbb{E}(L|_{S}) \subseteq X_{U}$ and so $L|_{\mathbb{E}(L|_{S})}$ is semiample since $L|_{X_{U}}$ is. As $L|_{S_{\mathbb{Q}}}$ is semiample by assumption, we conclude that $L|_{S}$ is semiample by \cite[Theorem 6.1]{witaszek2020keels}. 
		
		Let $S'$ be the strict transform of the surface $S$ on $W$, which must dominate $Y$. Let $\phi_{S'}, g_{S'}$ be the restrictions of $\phi, g$ to $S'$. Then $(\phi^{*}L)|_{S'} \sim_{\mathbb{Q}}\phi_{S'}^{*}(L|_{S})\sim_{\mathbb{Q}}g_{S'}^{*}D$ and since $L|_{S}$ is semiample and $Y$ is normal, we must have that $D$ is semiample by \autoref{pullback}. In turn this implies that $L$ is semiample as $\phi^*L \sim_{\mathbb{Q}} g^*D$.	
\end{proof}	
	
The following is a useful MMP technique to reduce to the case of equi-dimensional morphisms.
%OLD VERSION	
%	\begin{proposition}\label{three}
%		Let $(X,B)$ be a $\mathbb{Q}$-factorial klt threefold pair.
%		Let $(X,B) \to T$ be a projective $R$-morphism such that $K_{X}+B$ is nef over $T$. Suppose there is a commutative diagram of normal schemes over $T$:
%		\[\begin{tikzcd}
%		W \arrow[d, "g"] \arrow[r, "f"] & X \arrow[d, "h"]  \\
%		Y     \arrow[r, "\pi"]           & Z,              
%		\end{tikzcd}\]
%		where $h$ is a contraction. Suppose  that 
%		\begin{enumerate}
%			\item  $Z,Y$ have dimension $2$ where $Y$ is a regular scheme and $Z$ is a normal algebraic space;
%			\item $D$ is a big and nef EWM divisor on $Y$ and $\pi$ is the associated contraction;
%			\item  $f^{*}(K_{X}+B)=g^{*}D$;
%			\item $g$ is equi-dimensional. 
%		\end{enumerate} 
%		
%		Then there exists a $(K_X+B)$-trivial birational contraction $f \colon (X,B) \dashrightarrow (X', B')$ over $Z$ such that $X' \to Z$ is equi-dimensional. 
		%\begin{enumerate}
		%	\item[(1)] there is a boundary $\Delta \geq B$ such that $(X,\Delta)$ is klt,
		%	\item[(2)] there is a $(K_X+\Delta)$-MMP over $Z$ which terminates with an equi-dimensional pair over $Z$. Moreover, every step of this MMP is $(K_X+B)$-trivial. 
		%\end{enumerate} 
%	\end{proposition}

\begin{proposition}\label{three}
    Suppose that none of the residue fields of $R$ have characteristic $2,3$ and $5$. 
	Let $X \to T$ be a projective $R$-morphism of quasi-projective $R$-schemes with positive-dimensional image and let $(X,B)$ be a $\mathbb{Q}$-factorial klt threefold pair.
	Suppose that
	\begin{enumerate}
	    \item $K_X+B$ is an EWM $\mathbb{Q}$-divisor over $T$ with  $h \colon X \to Z$ being the associated EWM contraction;
		\item $Z$ has dimension 2.
	\end{enumerate}
	Then there exists a $(K_X+B)$-trivial birational contraction $(X,B) \dashrightarrow (X', B')$ over $Z$ such that $X' \to Z$ is equi-dimensional. 
\end{proposition}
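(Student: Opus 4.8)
The plan is to isolate the divisors obstructing equi-dimensionality and contract them by a relative MMP, exploiting that $K_X+B$ is numerically trivial over $Z$. Since $h$ is a contraction with $\dim X=3$ and $\dim Z=2$, its generic fibre is a curve and, by upper semicontinuity of fibre dimension, every fibre has dimension $\geq 1$; hence $h$ is equi-dimensional if and only if no prime divisor of $X$ is contracted by $h$ to a point. Because $h$ is the EWM contraction of the nef divisor $K_X+B$, every curve contracted by $h$ has $(K_X+B)$-degree $0$, so $K_X+B\equiv_Z 0$. Let $E_1,\dots,E_r$ be the prime divisors with $\dim h(E_j)=0$ and set $\mathcal{E}=\sum_j E_j$; the aim is to produce a $(K_X+B)$-trivial birational contraction over $Z$ after which none of these survives.

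First I would run the MMP. For $0<\epsilon<1$ the pair $(X,B+\epsilon\mathcal{E})$ is klt, and I run a $(K_X+B+\epsilon\mathcal{E})$-MMP over $T$ with scaling of an ample divisor, which terminates by \autoref{MMP}. The decisive observation is that every contracted extremal ray $R$ is automatically $(K_X+B)$-trivial: as $K_X+B$ stays nef over $T$ throughout (each step being crepant for it, by induction), the inequality $(K_X+B+\epsilon\mathcal{E})\cdot R<0$ forces $\mathcal{E}\cdot R<0$, so $R$ is spanned by a curve lying in the support of $\mathcal{E}$; since every component of the strict transform of $\mathcal{E}$ is contracted to a point, this curve is vertical over $Z$ and thus has $(K_X+B)$-degree $0$. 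Each step being crepant for $K_X+B$, by pulling back to a common resolution and applying \autoref{as-ext} the contraction $h$ descends to a morphism $h_i\colon X_i\to Z$ at every stage, the strict transforms of the $E_j$ continue to map to points under $h_i$, and the induction is sustained. Writing $(X',B')$ for the output and $\mathcal{E}'$ for the strict transform of $\mathcal{E}$, the composite $(X,B)\dashrightarrow(X',B')$ is a $(K_X+B)$-trivial birational contraction over $Z$; note that running over the scheme $T$, rather than over the algebraic space $Z$, sidesteps any issue with the MMP machinery.

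It then remains to show $\mathcal{E}'=0$, which forces $X'\to Z$ to be equi-dimensional, because the MMP extracts no divisors and only ever contracts components of $\mathcal{E}$, so every surviving divisor retains an image of dimension $\geq 1$. At termination $K_{X'}+B'+\epsilon\mathcal{E}'$ is nef over $T$; combined with $K_{X'}+B'\equiv_Z 0$ and the fact that curves contracted over $Z$ are contracted over $T$, this yields that $\mathcal{E}'$ is nef over $Z$. Suppose $\mathcal{E}'\neq 0$ and let $S\in|H|$ be a general member of a very ample linear system on $X'$; since $H$ is ample, $S$ dominates $Z$ and contains no $2$-dimensional fibre, so $S\to Z$ is generically finite, and $\mathcal{E}'|_S$ is a nonzero effective divisor whose components are contracted to points. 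Passing to the Stein factorisation $S\to\bar Z\to Z$, these components are exceptional for the birational morphism $S\to\bar Z$, so Zariski's lemma gives $(\mathcal{E}'|_S)^2<0$. On the other hand $(\mathcal{E}'|_S)^2=\mathcal{E}'^2\cdot H=\sum_j \bigl(\mathcal{E}'|_{E_j'}\cdot H|_{E_j'}\bigr)\geq 0$, since $\mathcal{E}'|_{E_j'}$ is nef (every curve in the contracted surface $E_j'$ is vertical over $Z$) and $H|_{E_j'}$ is ample. This contradiction gives $\mathcal{E}'=0$, and hence equi-dimensionality.

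The hard part will be this last step, namely the vanishing $\mathcal{E}'=0$: proving that an effective divisor which is relatively nef and contracted to points must itself be zero, for which the Hodge-index/Zariski negativity on a general ample surface appears to be the essential input. A second delicate point is the bookkeeping guaranteeing that the MMP only ever touches $\mathcal{E}$ and that the contraction to $Z$ genuinely descends at each stage, so that ``vertical over $Z$'' — and therefore $(K_X+B)$-triviality — stays meaningful throughout the process; this is where \autoref{as-ext} together with the relative numerical triviality $K_X+B\equiv_Z 0$ carries the argument.
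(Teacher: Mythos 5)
Your proof is correct, and while it opens the same way as the paper's (perturb $K_X+B$ by a small multiple of the vertical-over-points divisors, run the MMP over $T$, observe that every contracted ray is supported on those divisors and hence $(K_X+B)$-trivial, and descend to $Z$ via \autoref{as-ext}), it diverges in the decisive step of showing the offending divisors actually get contracted. The paper treats one bad point $z\in Z$ at a time, picks a single surface $F\subset h^{-1}(z)$, and shows $F$ can never become nef while it survives; this requires invoking abundance on the generic fibre $(X'_{K(Z)},B'_{K(Z)})$ and the equi-dimensional auxiliary model of \autoref{two}, then Kodaira's lemma on the big divisor $D$ of $Y$ and a general curve in the strict transform of $F$ to produce $F\cdot\phi_*C<0$. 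You instead contract all of $\mathcal{E}=\sum_j E_j$ at once and argue at termination that $\mathcal{E}'$ would be a nonzero effective divisor, exceptional over $Z$ and nef over $Z$ --- which is absurd. This is a genuine simplification: your route needs neither \autoref{two} nor the surface abundance input. Your closing computation can moreover be streamlined: rather than combining Zariski's lemma on $S\to\bar Z$ (where one should be slightly careful, as $\bar Z$ is only an algebraic space) with the inequality $\mathcal{E}'^2\cdot H\geq 0$, note that $\mathcal{E}'|_S$ is a nonzero effective divisor that is both exceptional and nef over $\bar Z$, so the negativity lemma (\cite[Lemma 2.14]{bhatt2020}, already used throughout the paper and valid in this setting) applied to $-\mathcal{E}'|_S$ gives $\mathcal{E}'|_S\leq 0$ directly, a contradiction. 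The remaining bookkeeping in your write-up --- crepancy of each step, descent of $h$ to each $h_i$ via the common-resolution argument, and the observation that no new divisor over a point of $Z$ can be created since the MMP extracts no divisors --- is sound and matches what the paper does implicitly.
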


\begin{proof}
	Let $z \in Z$ be a closed point such that the fibre $h^{-1}(z)$ is not one-dimensional. By upper semi-continuity of fibre dimensions for proper morphisms (\cite[\href{https://stacks.math.columbia.edu/tag/0D4Q}{Tag 0D4Q}]{stacks-project}) $h^{-1}(z)$ must contain an irreducible surface $F$.
		
		Take $t >0$ with $(X,B+tF)$ klt and run a $(K_X+B+tF)$-MMP over $T$. 
		We now show that this is an MMP over $Z$ as well.	
		Let $C$ be a curve generating an extremal $(K_X+B+tF)$-negative ray. As $(K_{X}+B)$ is nef over $T$, then $F\cdot C <0$. Therefore $C \subseteq F$ and since $F$ is contracted by $h$ to a point, so too is $C$. 
		 By definition $X \to Z$ contracts only $(K_X+B)$-trivial curves.
        From this we can conclude that the $(K_X+B+tF)$-MMP over $T$ is also a $(K_X+B+tF)$-MMP over $Z$ by \autoref{as-ext}. Moreover, the steps of this MMP are $(K_X+B)$-trivial.
		
		%After each step of this MMP $X \dashrightarrow X'$ we may need to replace $W$ with a higher model so that it admits a morphism to $X'$. We can then modify $Y$, using \autoref{two}, such that $W \to Y$ is equi-dimensional and hence the original assumptions of the proposition still hold on $X'$ with $Z$ unchanged. 

		Since this is an MMP of a pseudo-effective klt pair over $T$ it terminates by \autoref{MMP}. In fact we claim it terminates when the strict transform of $F$ is contracted.
		
		If $X\dashrightarrow X'$ does not contract $F$ then its transform on $X'$ remains the divisorial part of a fibre, so to establish this claim it is sufficient to show that such divisorial part is never nef. 
		By abundance on the generic fibre $(X'_{K(Z)},B'_{K(Z)})$ (\autoref{abundance-dim2}) we can apply \autoref{two} to find a commutative diagram 
		\[
		\begin{tikzcd}
        W \arrow[d, "g"] \arrow[r, "\phi"] & X' \arrow[d, "f'"]  \\
		Y\arrow[r, "\pi"]           & Z,              
		\end{tikzcd}
		\]
		where $g$ is equi-dimensional, $Y$ is a regular projective surface over $T$ and $\phi, \pi$ are proper birational.
		Let $F'$ be the strict transform of $F$ on $W$. Then $g(F')=\gamma$ must be an irreducible curve by equi-dimensionality and $\pi_*(\gamma)=z$. Choose a general curve $C$ in $F'$ such that $g(C)=\gamma$.
		Since is $D$ big, we write $D\sim_{\mathbb{R}} A+E$, for $A$ ample and $E$ effective by Kodaira's lemma.
		By Bertini theorems (\cite[Theorem 2.15]{bhatt2020}) we can choose a general $H \sim_{\mathbb{R}} A$ meeting $\gamma$ transversally. 
		Then $g^{*}H \cap C$ is a finite set of points and $g^{*}H$ is not contracted by $\phi$ as $H$ is general.
		%since $H$ is horizontal over $R$ and $f$ is an isomorphism over $\mathbb{Q}$ (\textcolor{red}{we need less than $f$ iso over $\mathbb{Q}$, no? we always have it essentially. no?}).
		We have $K_{X}+B \sim_{\mathbb{R}}\phi_{*}g^{*}(A+E) \sim_{\mathbb{R}}\phi_{*}g^{*}H+S$ where $S \geq 0$. As $C$ is general in $F'$ we have
		$$\phi_{*}g^{*}H \cdot \phi_{*}C >0 \textup{ and } (K_{X}+B)\cdot \phi_{*}C=g^{*}D \cdot C=D \cdot \gamma=0 \text{ as } \pi_*\gamma=z,$$ so we have $S \cdot C <0$. Since $C$ is general in $F'$ we must have that $F$ is contained in the support of $S$ and $F \cdot \phi_{*}C <0$.
		
		Since there are only finitely many closed points $z \in Z$ for which the fibres are not one-dimensional, we can repeat the above process a finite number of times and we terminate with a crepant model $(X',B')$ which is equi-dimensional over $Z$.	
	\end{proof}

        %OLD VERSION
        \iffalse
        \begin{proposition}\label{EDsemiampleness2}
		    Let $R$ be an excellent normal ring and take $T$ quasi-projective over $R$. Suppose that there is a commutative diagram of normal algebraic spaces, proper over $T$:
		
		    \[\begin{tikzcd}
		    W \arrow[d, "g"] \arrow[r, "f"] & X \arrow[d, "\phi"] \\
		    Y \arrow[r, "\psi"]             & Z                  
		    \end{tikzcd}\]
		
		    such that
		
		    \begin{enumerate}
			    \item $W,X,Y$ are schemes and $\dim (Y) =2$;
			    \item the vertical maps $g$ and $\phi$ are equi-dimensional;
			    \item the horizontal maps $f$ and $\psi$ are projective and birational;
			    \item there exists a EWM nef line bundles $L$ on $X$ (resp.~  $D$ on $Y$) whose associated maps is $\phi$ (resp.~ $\psi$);
			    \item $L|_{X_{k(Z)}}$ and $L|_{X_\mathbb{Q}}$ are semiample;
			    \item $g^{*}D=f^{*}L$.
		    \end{enumerate}
		
		    Then $L$ is semiample.
	    \end{proposition}
        \fi
	
The following result states that it's sufficient to prove abundance for a single minimal model inside it birational class.

% \begin{lemma}\label{l-inv-model}
% 	Suppose that $(X,B)$ is a log canonical pair, projective over $T$. 
% 	Let $\phi_{i} \colon (X, B) \dashrightarrow (Y_{i},B_{i})$ be minimal models for $(X,B)$ over $T$, for $i=1,2$. 
% 	Then $K_{Y_{1}}+B_{1}$ is semiample over $T$ if and only if $K_{Y_{2}}+B_{2}$ is so.
% \end{lemma}
	
	\begin{lemma}\label{l-inv-model}
	Suppose that $(X,B)$ is a pseudo-effective log canonical pair, projective over $T$. 
	Let $\phi_{i} \colon (X, B) \dashrightarrow (Y_{i},B_{i})$ be two weak log canonical models for $(X,B)$ over $T$. 
	Then $K_{Y_{1}}+B_{1}$ is semiample over $T$ if and only if $K_{Y_{2}}+B_{2}$ is so.
\end{lemma}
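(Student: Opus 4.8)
The plan is to reduce the statement to the crepant comparison of the two models and then descend semiampleness. Choose a normal scheme $W$ together with projective birational morphisms $f\colon W\to X$, $g_1\colon W\to Y_1$ and $g_2\colon W\to Y_2$ resolving $\phi_1$ and $\phi_2$ (for instance a resolution of the graph of $(\phi_1,\phi_2)$). The heart of the argument is the crepant identity
\[
g_1^{*}(K_{Y_1}+B_1)=g_2^{*}(K_{Y_2}+B_2)
\]
on $W$. Granting this, suppose $K_{Y_1}+B_1$ is semiample over $T$; then $g_1^{*}(K_{Y_1}+B_1)$ is semiample over $T$, hence so is $M:=g_2^{*}(K_{Y_2}+B_2)$. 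As $M$ is numerically $g_2$-trivial, the semiample contraction $\rho\colon W\to V$ of $M$ contracts every $g_2$-contracted curve, so by \autoref{as-ext} it factors as $\rho=\sigma\circ g_2$ for a contraction $\sigma\colon Y_2\to V$; writing $M\sim_{\mathbb{R}}\rho^{*}A=g_2^{*}\sigma^{*}A$ with $A$ ample over $T$ and using that $g_2^{*}$ is injective on $\mathbb{R}$-linear equivalence classes, we obtain $K_{Y_2}+B_2\sim_{\mathbb{R}}\sigma^{*}A$, i.e.\ $K_{Y_2}+B_2$ is semiample over $T$ (for a single Cartier divisor one may instead quote \autoref{pullback} directly). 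By symmetry this yields the claimed equivalence.

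To establish the crepant identity I would first record, for $i=1,2$, that
\[
E_i:=f^{*}(K_X+B)-g_i^{*}(K_{Y_i}+B_i)
\]
is effective and $g_i$-exceptional. That $\coeff_P(E_i)=0$ for every prime divisor $P$ that is not $g_i$-exceptional is immediate: such a $P$ is the strict transform of a divisor on $Y_i$ and, since $\phi_i$ is a birational contraction, also of a divisor on $X$, on which the coefficients of $B$ and of $B_i=\phi_{i*}B$ agree. For effectivity I would apply the negativity lemma over $f$: the divisor $-E_i=g_i^{*}(K_{Y_i}+B_i)-f^{*}(K_X+B)$ is $f$-nef, because $K_{Y_i}+B_i$ is nef over $T$ while $f^{*}(K_X+B)$ is $f$-trivial, and $f_{*}E_i\geq 0$ is precisely the weak log canonical model inequality $a(Q,X,B)\leq a(Q,Y_i,B_i)$ for the $\phi_i$-exceptional prime divisors $Q$ of $X$. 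Hence $E_i\geq 0$ by \cite[Lemma 2.14]{bhatt2020}.

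With the $E_i$ at hand, set $D:=g_1^{*}(K_{Y_1}+B_1)-g_2^{*}(K_{Y_2}+B_2)=E_2-E_1$. The divisor $-D=g_2^{*}(K_{Y_2}+B_2)-g_1^{*}(K_{Y_1}+B_1)$ is $g_1$-nef, since the first summand is nef over $T$ and the second is $g_1$-trivial, and $g_{1*}D=g_{1*}E_2\geq 0$ because $E_1$ is $g_1$-exceptional and $E_2\geq 0$; the negativity lemma then gives $D\geq 0$. Exchanging the roles of $Y_1$ and $Y_2$, the divisor $D$ is $g_2$-nef and $g_{2*}(-D)=g_{2*}E_1\geq 0$, so $D\leq 0$. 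Therefore $D=0$, which is the crepant identity.

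I expect the only genuine subtlety to lie in the second paragraph: the weak log canonical model hypothesis only bounds the discrepancies of divisors lying on $X$, so effectivity of $E_i$ along the $f$-exceptional locus is not tautological and must be extracted from the negativity lemma as above. Once this bookkeeping is in place, the two applications of negativity in the comparison step and the descent of semiampleness via \autoref{as-ext} (or \autoref{pullback}) are routine.
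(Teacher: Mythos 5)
Your argument is correct and follows essentially the same route as the paper's: pass to a common resolution, write $f^*(K_X+B)=g_i^*(K_{Y_i}+B_i)+E_i$ with $E_i\geq 0$ and $g_i$-exceptional, apply the negativity lemma in both directions to get $E_1=E_2$, and descend semiampleness from the identified pullbacks. The extra care you take in deriving $E_i\geq 0$ (which under the definition of weak log canonical model the paper uses, following \cite[Subsection 3.6]{BCHM10}, is already part of the hypothesis) and in spelling out the descent via \autoref{pullback} only adds detail to the same proof.
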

	
	\begin{proof}
		Let $f \colon Z \to X$ be a projective birational contraction of normal schemes together with proper birational contractions $g_i \colon Z \to Y_i$. 
		As $Y_i$ are weak log canonical models, we can write $$K_Z+\Delta_Z = g_i^*(K_{Y_i}+B_i) + E_i,$$
		where $\Delta_Z$ defined by crepant pullback $K_Z+\Delta_Z=f^*(K_X+B)$, and $E_i$ are effective and $g_i$-exceptional divisors.
		Consider 
		$$g_1^*(K_{Y_1}+B_1)-g_2^*(K_{Y_2}+B_2)=E_2-E_1. $$
		In particular, $E_2-E_1$ is $g_2$-nef and therefore by the negativity lemma (\cite[Lemma 2.14]{bhatt2020}) we conclude that $E_2 -E_1 \leq 0$. By symmetry, we conclude that $E_2=E_1$. Therefore $g_1^*(K_{Y_1}+B_1)=g_2^*(K_{Y_2}+B_2)$. In particular, $K_{Y_1}+B_1$ is semiample over $T$ iff $K_{Y_2}+B_2$ is so.
	\end{proof}
	
We are now ready to prove the abundance theorem for klt threefolds over a positive-dimensional base which is not of pure characteristic $0$.
	
	\begin{theorem}\label{abundance}
	    Suppose that none of the residue fields of $R$ have characteristic $2,3$ and $5$. 
	    Let $\pi \colon X \to T$ be a projective morphism of quasi-projective $R$-schemes such that $\pi(X)$ is positive-dimensional and it contains a closed point of characteristic $p>5$.
		Suppose $(X,B)$ is a klt threefold pair with $\mathbb{R}$-boundary. If $K_X+B$ is $\pi$-nef, then it is $\pi$-semiample.
	\end{theorem}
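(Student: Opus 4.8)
The plan is to prove the statement first for $\mathbb{Q}$-factorial pairs with $\mathbb{Q}$-boundary, and then to realise $K_X+B$ as the pullback of an ample divisor through a suitable equi-dimensional contraction so that \autoref{EDsemiampleness2} applies. \emph{Reductions.} First I would arrange that $\pi$ is a contraction: replacing $T$ by the integral scheme $\pi(X)$ changes nothing, since semiampleness over $T$ and over the closed subscheme $\pi(X)$ agree, and passing to the contraction part $T'$ of the Stein factorisation of $X\to\pi(X)$ is harmless because, the morphism $T'\to\pi(X)$ being finite, a divisor is semiample over $T'$ if and only if it is semiample over $T$ by \autoref{t-semiamplecontraction} together with the stability of relative ampleness under finite base morphisms. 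Next, by \autoref{Q-factorial} I would pass to a small $\mathbb{Q}$-factorialisation $(Y,B_Y)\to(X,B)$; as this is crepant and both pairs are weak log canonical models of $(X,B)$, \autoref{l-inv-model} reduces us to the case that $X$ is $\mathbb{Q}$-factorial. Finally \autoref{QtoR} reduces the problem to $\mathbb{Q}$-boundaries, so from now on $L:=K_X+B$ is a $\pi$-nef $\mathbb{Q}$-Cartier $\mathbb{Q}$-divisor. If $L$ is $\pi$-big (in particular whenever $\pi$ is birational), then choosing $m$ with $mL$ Cartier, the Cartier divisor $2mL$ satisfies $2mL-(K_X+B)=(2m-1)L$, which is $\pi$-nef and $\pi$-big, so $L$ is $\pi$-semiample by \autoref{t-bpf}.

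\emph{The non-big case.} Assume $L$ is not $\pi$-big. I would verify the hypotheses of \autoref{two}. On the generic fibre, $L|_{X_{K(T)}}$ is the log canonical class of a klt pair of dimension at most $2$ over the field $K(T)$, hence semiample by the surface abundance theorem \autoref{abundance-dim2}; and $L|_{X_\mathbb{Q}}$ is either empty (when $R$ has pure characteristic $p$) or the log canonical class of a klt threefold nef over $T_\mathbb{Q}$, hence semiample by abundance in characteristic $0$. Thus \autoref{two} applies: $L$ is EWM over $T$ with associated EWM contraction $f\colon X\to Z$, and since the diagram there exhibits $Z$ as birational to a scheme $Y$ of dimension $\leq 2$, we get $\dim Z\leq 2$.

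\emph{Concluding via equi-dimensionality.} If $\dim Z\leq 1$, then $Z$ is a regular curve by \autoref{l-cod1} and $f$ is automatically equi-dimensional (each fibre is cut out by a single equation), so \autoref{EDsemiampleness2} yields that $L$ is semiample. The hard part is the case $\dim Z=2$, where $f$ may fail to be equi-dimensional; here I would apply \autoref{three} to produce a $(K_X+B)$-trivial birational contraction $(X,B)\dashrightarrow(X',B')$ over $Z$ with $X'\to Z$ equi-dimensional. The pair $(X',B')$ is klt and crepant to $(X,B)$, so $L':=K_{X'}+B'$ again restricts to semiample classes on the generic fibre and on the characteristic $0$ part, while its EWM contraction $X'\to Z$ is now equi-dimensional; hence $L'$ is semiample by \autoref{EDsemiampleness2}. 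Since $(X,B)$ and $(X',B')$ are both weak log canonical models of $(X,B)$, \autoref{l-inv-model} transfers semiampleness back to $L=K_X+B$, completing the argument. The main obstacles are therefore the extraction of the equi-dimensional crepant model in the $\dim Z=2$ case and the verification that semiampleness on the generic and characteristic-$0$ fibres persists through this modification, both of which are supplied by the cited results.
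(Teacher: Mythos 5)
Your proposal is correct and takes essentially the same route as the paper: reduce to a $\mathbb{Q}$-factorial contraction with $\mathbb{Q}$-boundary via \autoref{Q-factorial}, \autoref{l-inv-model} and \autoref{QtoR}, dispose of the big case with \autoref{t-bpf}, and otherwise combine \autoref{two}, \autoref{three} and \autoref{EDsemiampleness2}. The only cosmetic difference is that you split the non-big case by $\dim Z$ rather than by $\kappa(K_{X_{K(T)}}+B_{K(T)})+\dim(T)$, which yields the same trichotomy, with your $\dim Z\leq 1$ case matching the paper's flat-over-a-Dedekind-base argument.
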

	
	\begin{proof}
		By Stein factorisation we can assume $\pi$ to be a contraction of normal schemes, so $\dim(T) \geq 1$.
		If $X$ is not $\mathbb{Q}$-factorial, then we may freely replace it with a $\mathbb{Q}$-factorialisation by \autoref{Q-factorial} and \autoref{l-inv-model}. Moreover by \autoref{QtoR} we can suppose that $\Delta$ is a $\mathbb{Q}$-boundary (we reduce to this case where we can apply the results of \cite{witaszek2020keels} which apply only to $\mathbb{Q}$-Cartier $\bQ$-divisors).
		As the dimension of $X_{K(T)}$ is at most 2, we conclude by \cite[Lemma 9.22]{bhatt2020} and \autoref{abundance-dim2} that $\kappa(K_{X_{K(T)}}+B_{K(T)}) \geq 0$. We now divide the proof according to the value of $\kappa(K_{X_{K(T)}}+B_{K(T)})+\dim (T)$.

		\begin{case} 
		$\kappa(K_{X_{K(T)}}+B_{K(T)})+\dim (T)=3$.
		\end{case}
		In this case, $K_X+B$ is big and we can conclude by applying \autoref{t-bpf} to $L:=2(K_{X}+B)$.
		
		\begin{case}
		$\kappa(K_{X_{K(T)}}+B_{K(T)})+\dim (T)=2$.
		\end{case}
		By \autoref{abundance-dim2},  $K_{X_{K(T)}}+B_{K(T)}$ and $K_{X_\mathbb{Q}}+B_{\mathbb{Q}}$ are semiample $\mathbb{Q}$-divisors. As $K_X+B$ is not big, by \autoref{two} then $K_X+B$ is EWM and we denote by $f \colon X \to Z$ the associated EWM contraction.
		By \autoref{three} and \autoref{l-inv-model} we may replace $X$ so that $X \to Z$ is equi-dimensional. We then apply \autoref{EDsemiampleness2} to deduce that $K_{X}+\Delta$ is $\pi$-semiample.
        
        \begin{case}
		$\kappa(K_{X_{K(T)}}+B_{K(T)})+\dim (T)=1$.
		\end{case} 
		The hypothesis $\dim (T) \geq 1$ implies  $\kappa(K_{X_{K(T)}}+B_{K(T)})=0$. Then $\pi \colon X \to T$ is flat, since $T$ is a Dedekind scheme and $X$ is integral by \cite[Proposition 9.7]{Ha77}. Since $K_{X_{{K(T)}}}+ B_{{K(T)}}$ is semiample by \autoref{abundance-dim2}, we conclude $K_X+B$ is semiample by \autoref{lemma:EDsemiampleness}. 
\end{proof}

\begin{remark}
	While in this section we worked on threefolds over rings whose residue fields have characteristic $p \neq 2, 3$ and $5$, this is just due to the current state of the art on the MMP. 
	The arguments in the section for $\kappa(K_{X_{K(T)}}+B_{K(T)})+\dim(T) \leq 2$ work as long as the MMP results are known to hold.
	In particular, abundance holds for mixed characteristic threefolds over a Dedekind domain with residue characteristics different from $2$ and $3$ by \cite{XX22}.
	%In fact the arguments in this section show that abundance in mixed characteristic holds in the case that $\kappa(K_{X}+B)+\dim R = 2$, assuming the MMP holds for pairs of dimension $\dim X $ and that appropriate resolution and abundance results hold in lower dimensions. 
\end{remark}
	
\section{Applications to invariance of plurigenera} \label{s-inv-plurigenera}

	In this section, $R$ will always be an excellent DVR with residue field of characteristic $p>5$. 
	The purpose of this section is to generalise the asymptotic invariance of plurigenera proven in \cite[Theorem 3.1]{EH} to families of \emph{non-log-smooth} surface pairs, as well as excellent DVRs with non-perfect residue field. Similar results in characteristic 0 are proven in \cite{HMX13, HMX18}.
	The first case we discuss is the asymptotic invariance for families of good minimal models.
	
\begin{theorem}\label{thm:ADIOP_SA}
	Let $\pi \colon X \to\Spec (R)$ be a  projective contraction and suppose $(X,B)$ is a three-dimensional klt pair with $\mathbb{Q}$-boundary.
	Assume that $(X,X_k+B)$ is plt and $K_X+B$ is semiample over $R$.
	Suppose one of the following holds:
	\begin{enumerate}
		\item $\kappa(K_{X_k}+B_k)\neq 1$; or
		\item  $B_k$ is big over $\Proj R(K_{X_k}+B_k)$.
	\end{enumerate} 
	Then there exists an $m_{0} \in \mathbb{N}$ such that 
	$$h^0(X_K,m(K_{X_K}+B_K))=h^0(X_k,m(K_{X_k}+B_k))$$
	for all $m\in m_0\bN$.
\end{theorem}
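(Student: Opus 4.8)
The plan is to reduce the comparison of plurigenera to the single identity $f_{k,*}\cO_{X_k}=\cO_{Z_k}$, where $f\colon X\to Z$ is the semiample contraction of $L:=K_X+B$ over $R$, and then to establish that identity through a relative vanishing statement fed into \autoref{adj-push}. First I would apply \autoref{invAdj3} to deduce that $X_k$ is normal and $(X_k,B_k)$ is klt; in particular $X$ and $\pi$ satisfy the hypotheses of \autoref{l-stein-invariance} and \autoref{adj-push}. Since $L$ is semiample over $R$ its restriction is semiample on $X_k$, and by adjunction (using $X_k\sim_R 0$) we have $L_k=K_{X_k}+B_k$. By \autoref{l-stein-invariance} the desired equality $h^0(X_K,m(K_{X_K}+B_K))=h^0(X_k,m(K_{X_k}+B_k))$ for all sufficiently divisible $m$ is equivalent to $f_{k,*}\cO_{X_k}=\cO_{Z_k}$, which, as anticipated in the remark following \autoref{l-stein-invariance}, is the heart of the matter.

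Next I would analyse $f$ on the special fibre. Writing the Stein factorisation $f_k\colon X_k\xrightarrow{g_1}Y_1\xrightarrow{h_1}Z_k$, the divisor $L_k$ is $g_1$-trivial and pulled back from an ample divisor on $Y_1$, so $g_1$ is precisely the semiample (Iitaka) contraction of $L_k=K_{X_k}+B_k$, with $Y_1=\Proj R(K_{X_k}+B_k)$ and $\dim Y_1=\kappa(K_{X_k}+B_k)=\kappa(K_{X_K}+B_K)$ by \autoref{lemma_DIOK}. The identity $f_{k,*}\cO_{X_k}=\cO_{Z_k}$ is then equivalent to $h_1$ being an isomorphism, and by \autoref{adj-push} it suffices to prove the relative vanishing $R^1g_{1,*}\cO_{X_k}=0$.

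I would obtain this vanishing from relative Kawamata--Viehweg vanishing for klt surfaces (\cite[Proposition 3.2]{Tan18}, \cite[Theorem 5.7]{BT22}), according to which hypothesis is in force. If $B_k$ is big over $Y_1=\Proj R(K_{X_k}+B_k)$, then, since $K_{X_k}+B_k\equiv_{g_1}0$, Kodaira's lemma lets me write $B_k\sim_{g_1,\bQ}A+E$ with $A$ a $g_1$-ample $\bQ$-divisor and $E\geq 0$; for small $\epsilon>0$ the pair $(X_k,(1-\epsilon)B_k+\epsilon E)$ is klt and satisfies $-(K_{X_k}+(1-\epsilon)B_k+\epsilon E)\equiv_{g_1}\epsilon A$, which is $g_1$-ample, so relative vanishing gives $R^1g_{1,*}\cO_{X_k}=0$ (this covers in particular the elliptic case $\kappa=1$). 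If instead $\kappa(K_{X_k}+B_k)\neq 1$, then $\dim Y_1\in\{0,2\}$; when $\dim Y_1=2$ the map $g_1$ is birational, so $-(K_{X_k}+B_k)\equiv_{g_1}0$ is $g_1$-nef and (bigness being automatic for birational maps) $g_1$-big, and relative vanishing again yields $R^1g_{1,*}\cO_{X_k}=0$.

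The one case left is $\kappa(K_{X_k}+B_k)=0$ with $B_k$ not big, which can occur only under hypothesis (1); this is where I expect the genuine difficulty. Here $-(K_{X_k}+B_k)\equiv_{g_1}0$ is not $g_1$-big, relative vanishing is unavailable, and indeed $R^1g_{1,*}\cO_{X_k}=H^1(X_k,\cO_{X_k})$ can be nonzero (for instance when $X_k$ is an abelian surface), so \autoref{adj-push} does not apply. On the other hand $\kappa(K_{X_K}+B_K)=0$ forces the semiample base to be $Z=\Spec R$ and $Z_k=\Spec k$, so $f_{k,*}\cO_{X_k}=\cO_{Z_k}$ reduces to $H^0(X_k,\cO_{X_k})=k$, that is, to the geometric connectedness of the special fibre. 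I would prove this directly from the fact that $\pi$ is a contraction with geometrically connected generic fibre together with the log Calabi--Yau structure $K_X+X_k+B\sim_{\bQ,R}0$, the point being that such a fibration is cohomologically flat in degree $0$ (equivalently, $R^1\pi_*\cO_X$ is torsion free). Granting this in the remaining case, $f_{k,*}\cO_{X_k}=\cO_{Z_k}$ holds throughout, and \autoref{l-stein-invariance} delivers the asserted invariance of plurigenera.
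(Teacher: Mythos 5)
Your overall mechanism is the same as the paper's: reduce the plurigenera comparison to $f_{k,*}\cO_{X_k}=\cO_{Z_k}$ via \autoref{l-stein-invariance}, and establish that identity through \autoref{adj-push} fed by relative Kawamata--Viehweg vanishing on the special fibre. Where you genuinely diverge is the case $\kappa=1$: the paper (\autoref{p-1-case}) does not work on $X_k$ directly, but runs a $K_X$-MMP over $Z$ on the total space (possible because $B_k$ big over $Z_k$ makes $K_X$ not pseudo-effective over $Z$, and every step is $(K_X+B)$-trivial), terminates with a Mori fibre space over $Z$ on which $-K_X$ is relatively ample, and only then invokes the vanishing package through \autoref{invAdj2}. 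Your alternative --- perturbing the boundary on the fibre by Kodaira's lemma, $B_k\sim_{g_1,\bQ}A+E$, so that $-(K_{X_k}+(1-\epsilon)B_k+\epsilon E)$ is $g_1$-ample for the Iitaka fibration $g_1\colon X_k\to Y_1$ --- applies the same vanishing theorems directly and avoids the MMP over $Z$ entirely; it is legitimate since $X_k$ is a klt, hence $\bQ$-factorial, surface, so $B_k$ is $\bQ$-Cartier even before the reduction to $\bQ$-factorial $X$ that the paper performs via \autoref{l-reduce-Q-fac}. Your treatment of $\kappa=2$ coincides in substance with \autoref{p-gen-case}.

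The gap is exactly where you flag it: $\kappa=0$ with $B_k$ not big. There you must show $H^0(X_k,\cO_{X_k})=k$, equivalently that $H^1(X,\cO_X)$ has no $\varpi$-torsion, and you assert without proof that the log Calabi--Yau structure forces cohomological flatness in degree $0$, then proceed ``granting this''. That assertion is not a formal consequence of anything you quote: cohomological flatness in degree $0$ is usually deduced from geometrically reduced fibres, but normality of $X_k$ does not give geometric reducedness over an imperfect residue field, and the vanishing $H^1(X_k,\cO_{X_k})=0$ that drives \autoref{adj-push} in your other cases genuinely fails here (e.g.\ abelian or bielliptic special fibre), so no tool cited in the paper closes this case for you. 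The paper handles it by a different observation: $\kappa=0$ forces $Z=\Spec(R)$, hence $K_X+B\sim_{\bQ}0$ over $R$, and the equality of plurigenera is read off from the rank computation $h^0(X_t,mL_t)=\rk(f_{t,*}\cO_{X_t})$ in \autoref{lemma_DIOK}; note that identifying that rank on the special fibre with $1$ is the same assertion you deferred, so this is the one point of the argument that genuinely needs an independent justification rather than the vanishing-plus-lifting machinery. As written, your proof is complete under hypothesis (2) and in the cases $\kappa=1,2$ of hypothesis (1), but not in the remaining $\kappa=0$ case.
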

	
	We start by showing the normality of the central fibre.
	
	\begin{proposition}\label{p-gen-case}
	Let $\pi \colon X \to\Spec (R)$ be a  projective contraction and suppose $(X,B)$ is a three-dimensional klt pair with $\mathbb{Q}$-boundary.
	Suppose that $(X,X_{k}+B)$ is plt. If $f \colon X \to Z$ is a projective birational morphism of normal schemes over $R$ such that $-(K_{X}+B)$ is $f$-nef, then $X_k$ and $Z_k$ are normal and $f_{k,*}\ox[X_{k}]=\ox[Z_{k}]$.
	\end{proposition}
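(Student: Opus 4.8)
The plan is to deduce everything from the adjunction machinery already established, namely \autoref{invAdj3} and \autoref{invAdj2}, by verifying their hypotheses for the given morphism $f$. First I would handle the normality of $X_k$: since $X$ is normal and $(X,X_k+B)$ is plt, \autoref{invAdj3} gives at once that $X_k$ is normal and $(X_k,B_k)$ is klt. As $X_k$ is the central fibre of the contraction $\pi$ it is connected, and being normal it is therefore irreducible; I will use this irreducibility below.

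Next I would observe that $f$ is a contraction, which is immediate: being a projective birational morphism onto the normal scheme $Z$, we have $f_*\ox=\ox[Z]$ by Zariski's main theorem. The substantive point is then to check that $-(K_{X_k}+B_k)$ is $f_k$-big and $f_k$-nef, so that \autoref{invAdj2} applies with $\Delta=B$. For nefness I would use that the central fibre $X_k=\operatorname{div}(\varpi)$ is a principal (hence $\sim 0$) Cartier divisor for a uniformiser $\varpi$ of $R$; thus $K_X+B\sim K_X+X_k+B$ over $Z$ and $-(K_X+X_k+B)$ is $f$-nef. Adjunction identifies $(K_X+X_k+B)|_{X_k}$ with $K_{X_k}+B_k$ (the different equalling $B_k$ by plt-ness), and restricting $f$-nefness to curves contained in $X_k$ then gives $f_k$-nefness of $-(K_{X_k}+B_k)$.

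The main obstacle is establishing $f_k$-bigness, and the cleanest route I see is to prove that $f_k\colon X_k\to Z_k$ is birational, whence bigness over a generically $0$-dimensional fibre is automatic. The key observation is that every $f$-exceptional prime divisor must dominate $\Spec R$: the only vertical prime divisor is $X_k$ itself (as $X_k$ is irreducible of codimension one), and it is not contracted, since $f$ surjects over $R$ and so $f(X_k)=Z_k$ is a divisor. Consequently the image of each exceptional divisor has codimension $\geq 2$ in $Z$ and cannot contain any codimension-one point; since $f$ is an isomorphism away from the images of its exceptional divisors, it is an isomorphism over the generic point of every component of $Z_k$. Hence $f_k$ is birational and $-(K_{X_k}+B_k)$ is $f_k$-big.

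With both conditions verified, \autoref{invAdj2} (applied with $\Delta=B$) immediately yields that $Z_k$ is normal and $f_{k,*}\ox[X_k]=\ox[Z_k]$, completing the argument. I expect the only delicate point to be the codimension bookkeeping that forces $f_k$ to be birational; the passage from $f$-nefness to $f_k$-nefness and the final invocation of \autoref{invAdj2} are then routine.
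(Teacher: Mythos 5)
Your proof is correct and follows essentially the same route as the paper's: normality of $X_k$ via \autoref{invAdj3}, then verification that $-(K_{X_k}+B_k)$ is $f_k$-nef and $f_k$-big so that \autoref{invAdj2} applies (the paper obtains bigness simply from $f_k$ being generically finite, using that $X$ and $Z$ are equidimensional over $R$, rather than upgrading to birationality of $f_k$ as you do). The one imprecision is the phrase ``$f$ is an isomorphism away from the images of its exceptional divisors,'' which overlooks small contractions; but since the non-isomorphism locus of a proper birational morphism onto a normal integral scheme always has codimension $\geq 2$, your conclusion that $f$ is an isomorphism over the generic point of $Z_k$ still stands.
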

	
	\begin{proof}
		By \autoref{invAdj3} the central fibre $X_{k}$ is normal. 
		As $f$ is surjective, generically finite, and $X$ and $Z$ are equidimensional over $R$, then $f_{k}$ is generically finite as well. In particular $-(K_{X_{k}}+B_{k})$ is $f_{k}$-big and $f_k$-nef. We conclude by \autoref{invAdj2}.
	\end{proof}
	
	The next result is used to reduce the non-$\mathbb{Q}$-factorial case to the $\mathbb{Q}$-factorial one.
	
	\begin{lemma}\label{l-reduce-Q-fac}
		Let $Y \to \Spec(R)$ be a projective contraction such that $(Y,Y_{k}+\Delta)$ is a plt threefold.
		Let $f \colon Y \to X$ be a $(K_{Y}+\Delta)$-trivial small birational contraction over $R$ with $B=\pi_{*}\Delta$. 
		Then
		$$h^{0}(X_{k},m(K_{X_k}+B_{k}))= h^{0}(Y_{k},m(K_{Y_{k}}+\Delta_{k}))$$ 
		for all $m$ sufficiently large and divisible. 
	\end{lemma}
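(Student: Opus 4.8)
The plan is to reduce the comparison of plurigenera on the two special fibres to a single projection-formula computation, once I establish that $f$ restricts to a crepant contraction $f_k \colon Y_k \to X_k$ of normal surfaces. First I would note that, as $f$ is a morphism over $R$ and $X_k$ is the Cartier fibre over the closed point, one has $f^{*}X_k = Y_k$; combined with the hypothesis that $f$ is $(K_Y+\Delta)$-trivial this upgrades to $K_Y+Y_k+\Delta = f^{*}(K_X+X_k+B)$, so $f$ is in fact $(K_Y+Y_k+\Delta)$-crepant. In particular $-(K_Y+\Delta)$ is $f$-nef, and since $(Y,\Delta)$ is klt (a consequence of the plt hypothesis on $(Y,Y_k+\Delta)$) the hypotheses of \autoref{p-gen-case} are met with the roles of $(X\to Z)$ played by $(Y \to X)$. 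This yields that both $Y_k$ and $X_k$ are normal and, crucially, that $f_{k,*}\ox[Y_k]=\ox[X_k]$; the same machinery (through \autoref{invAdj2}) gives that $(X,X_k+B)$ is plt and that $(X_k,B_k)$ and $(Y_k,\Delta_k)$ are klt.

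Next I would push the crepant relation through adjunction. Because $(Y,Y_k+\Delta)$ and $(X,X_k+B)$ are both plt with normal special fibres, the differents agree with the naive restrictions, so $(K_Y+Y_k+\Delta)|_{Y_k}=K_{Y_k}+\Delta_k$ and $(K_X+X_k+B)|_{X_k}=K_{X_k}+B_k$. Restricting the identity $K_Y+Y_k+\Delta=f^{*}(K_X+X_k+B)$ to $Y_k=f^{*}X_k$, and using that restriction commutes with pullback along $f_k=f|_{Y_k}$, I obtain $K_{Y_k}+\Delta_k=f_k^{*}(K_{X_k}+B_k)$. Thus $f_k$ is a crepant birational contraction of normal surfaces.

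Finally, for $m$ sufficiently divisible the $\mathbb{Q}$-Cartier divisor $m(K_{X_k}+B_k)$ becomes Cartier, whence $m(K_{Y_k}+\Delta_k)=f_k^{*}(m(K_{X_k}+B_k))$ and $\ox[Y_k](m(K_{Y_k}+\Delta_k))=f_k^{*}\ox[X_k](m(K_{X_k}+B_k))$ as line bundles. The projection formula together with $f_{k,*}\ox[Y_k]=\ox[X_k]$ then gives $f_{k,*}\ox[Y_k](m(K_{Y_k}+\Delta_k))=\ox[X_k](m(K_{X_k}+B_k))$, and taking global sections yields the asserted equality of $h^{0}$. The step I expect to require the most care is the compatibility of adjunction with the crepant pullback on the special fibres, namely verifying that the differents on $Y_k$ and $X_k$ match up under $f_k$ so that the fibrewise relation $K_{Y_k}+\Delta_k=f_k^{*}(K_{X_k}+B_k)$ holds exactly; the deeper geometric input, the connectedness statement $f_{k,*}\ox[Y_k]=\ox[X_k]$, is already supplied by \autoref{p-gen-case} and so poses no further obstacle here.
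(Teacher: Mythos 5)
Your proof is correct and follows essentially the same route as the paper: establish $K_Y+Y_k+\Delta\sim_{\mathbb{Q}}f^*(K_X+X_k+B)$, invoke \autoref{p-gen-case} for normality of the fibres and $f_{k,*}\mathcal{O}_{Y_k}=\mathcal{O}_{X_k}$, and conclude by the projection formula (which the paper packages as an application of \autoref{l-stein-invariance}). The only step you gloss over is that upgrading $(K_Y+\Delta)$-triviality to the genuine $\mathbb{Q}$-linear equivalence $K_Y+\Delta\sim_{\mathbb{Q}}f^*(K_X+B)$ --- in particular the $\mathbb{Q}$-Cartierness of $K_X+B$ --- rests on the relative base-point-free theorem (\autoref{t-bpf}), which is how the paper justifies it.
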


	\begin{proof}
	%Write $K_Y+\Delta'=\pi^* (K_X+B)$, since $K_{Y}+\Delta$ is numerically $\pi$-trivial we have that $\Delta'-\Delta \equiv_{\pi} 0$. Similarly as $B=\pi_{*}\Delta$ we have that $\Delta'-\Delta$ is exceptional and hence $\Delta'-\Delta=0$ by the negativity lemma \cite[Lemma 2.14]{bhatt2020}. 
		
		As $f$ is small, the central fibre $Y_k$ is irreducible. 
		By \autoref{t-bpf},	$K_Y+Y_k+\Delta \sim_{\mathbb{Q}} f^*(K_X+X_k+B)$.
		Then by \autoref{p-gen-case}, $Y_k$ and $X_k$ are both normal.
		As $f$ is the semiample contraction associated to $K_Y+\Delta$ over $X$ and it is birational, we conclude by \autoref{l-stein-invariance}.
	\end{proof}
	
	We now discuss the delicate case of invariance for plurigenera where the Kodaira dimension is 1 and the boundary is big over the ample model.

	\begin{proposition}\label{p-1-case}
		
		Suppose $(X,B)$ is a $\mathbb{Q}$-factorial three-dimensional klt pair with $\mathbb{Q}$-boundary. Let $\pi \colon X \to \Spec(R)$ be a projective contraction such that $(X,X_{k}+B)$ is plt. Suppose further that
		\begin{enumerate}
			\item $K_X+B$ is semiample and $f\colon X \to Z$ is its semiample contraction over $R$;
			\item $\kappa(K_{X_{k}}+B_{k})=1$;
			\item $B_{k}$ is big over $Z_{k}$.
		\end{enumerate}
		Then there exists an $m_{0} \in \mathbb{N}$  such that 
		$$h^0(X_K,m(K_{X_K}+B_K))=h^0(X_k,m(K_{X_k}+B_k))$$
		for all $m\in m_0\bN$.	
	\end{proposition}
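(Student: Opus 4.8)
The plan is to reduce the statement, via the invariance criterion \autoref{l-stein-invariance}, to the vanishing of a single higher direct image sheaf, and then to establish that vanishing using the bigness of $B_k$. First I would fix the geometry. Since $(X,X_k+B)$ is plt, \autoref{invAdj3} gives that $X_k$ is normal and $(X_k,B_k)$ is klt; as $K_X+B$ is semiample over $R$, its restriction $K_{X_k}+B_k$ is semiample on the surface $X_k$, and by \autoref{lemma_DIOK} we have $\kappa(K_{X_k}+B_k)=\kappa(K_{X_K}+B_K)=1$. Hence $Y_k:=\Proj R(K_{X_k}+B_k)$ is a regular curve and $Z_K$ is a curve, so $\dim Z=2$ and $Z_k$ is a curve. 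Writing the Stein factorisation of $f_k$ as $X_k\xrightarrow{g_1}Y_k\xrightarrow{h_1}Z_k$, the map $g_1$ is the Iitaka fibration of $K_{X_k}+B_k$ and $h_1$ is finite, a priori a nontrivial purely inseparable cover (this is exactly the obstruction to lifting sections discussed before the statement). By \autoref{l-stein-invariance} it suffices to show $f_{k,*}\mathcal O_{X_k}=\mathcal O_{Z_k}$, and by \autoref{adj-push} applied to $f\colon X\to Z$ this follows once we prove $R^1 g_{1,*}\mathcal O_{X_k}=0$, since that forces $h_1$ to be an isomorphism.

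Next I would analyse the general fibre $C$ of $g_1$. As $C$ moves in a base-point-free pencil on $X_k$, a general such $C$ avoids the finitely many singular points of $X_k$ and satisfies $C^2=0$, so adjunction gives $K_C+B_k|_C=(K_{X_k}+B_k)|_C\sim_{\mathbb Q}0$. Because $B_k$ is big over $Z_k$, equivalently over $Y_k$ since $h_1$ is finite, we get $\deg B_k|_C>0$, hence $\deg K_C<0$; therefore $p_a(C)=0$ and $H^1(C,\mathcal O_C)=0$ over the (possibly imperfect) residue field $K(Y_k)$. By cohomology and base change this shows $R^1 g_{1,*}\mathcal O_{X_k}$ vanishes at the generic point of $Y_k$, so it is a torsion sheaf supported at finitely many closed points. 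This is precisely where hypothesis (3) enters: without bigness the general fibre could have positive arithmetic genus—the elliptic-fibration case excluded here—and the sheaf would have positive rank.

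Finally, to kill the torsion I would invoke relative Kawamata--Viehweg vanishing for surfaces (\cite[Proposition 3.2]{Tan18}, \cite[Theorem 5.7]{BT22}), exactly as in the proof of \autoref{invAdj2}. Since $K_{X_k}+B_k\sim_{g_1,\mathbb Q}0$ and $B_k$ is $g_1$-big, for $0<\epsilon\ll 1$ the pair $(X_k,(1-\epsilon)B_k)$ is klt and $-(K_{X_k}+(1-\epsilon)B_k)\sim_{g_1,\mathbb Q}\epsilon B_k$ is $g_1$-big; applying the vanishing theorem to the Cartier divisor $0\equiv_{g_1}K_{X_k}+(1-\epsilon)B_k+\epsilon B_k$ would yield $R^1 g_{1,*}\mathcal O_{X_k}=0$. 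I expect the main obstacle to be arranging the $g_1$-\emph{nef} hypothesis required by the vanishing theorem, since $\epsilon B_k$ is only known to be $g_1$-big and may fail to be $g_1$-nef along the special fibres of $g_1$. I would resolve this by passing to a suitable relative model over $Y_k$—running a relative MMP, or taking a relative minimal resolution together with the relative ample model—on which the relevant divisor is $g_1$-nef and $g_1$-big, using that klt surface singularities are rational for $p>5$ so that $R^1 g_{1,*}\mathcal O_{X_k}$ is unchanged under such modifications.

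Once $R^1 g_{1,*}\mathcal O_{X_k}=0$ is established, \autoref{adj-push} gives $f_{k,*}\mathcal O_{X_k}=\mathcal O_{Z_k}$ and the normality of $Z_k$, and then \autoref{l-stein-invariance} yields the equality $h^0(X_K,m(K_{X_K}+B_K))=h^0(X_k,m(K_{X_k}+B_k))$ for all sufficiently divisible $m$, which is the assertion.
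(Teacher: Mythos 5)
Your reduction to $f_{k,*}\mathcal O_{X_k}=\mathcal O_{Z_k}$ via \autoref{l-stein-invariance} and \autoref{adj-push} is exactly the endgame of the paper, and your argument appears correct, but the way you produce the required vanishing is genuinely different. The paper never touches the special fibre directly: it runs a $K_X$-MMP on the \emph{threefold} over $Z$ (each step is $(K_X+B)$-trivial since $K_X+B\sim_{\mathbb Q,Z}0$), uses the bigness of $B_k$ over $Z_k$ to show the resulting Mori fibre space is over $Z$ itself, so that $-K_X$ becomes $f$-ample, and then quotes \autoref{invAdj2} with $\Delta=0$. The price is the bookkeeping: one must check that each threefold step preserves plt-ness of $(X,X_k+B)$, normality and irreducibility of the fibres, and the plurigenera (via \autoref{p-gen-case} and \autoref{l-reduce-Q-fac}). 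You instead leave $X$ alone and run a $(K_{X_k}+(1-\epsilon)B_k)$-MMP on the \emph{surface} $X_k$ over the curve $Y_k$; since $K_{X_k}+(1-\epsilon)B_k\sim_{\mathbb Q,Y_k}-\epsilon B_k$ is anti-big over $Y_k$ this ends with a Mori fibre space whose base must be $Y_k$, where the same relative vanishing ([Tan18, Proposition 3.2], [BT22, Theorem 5.7]) applies. This buys you a purely two-dimensional argument needing only the excellent-surface MMP, at the cost of one input the paper does not use: rationality of klt surface singularities over (possibly imperfect) residue fields, together with the Leray spectral sequence, to transfer $R^1g_{1,*}\mathcal O_{X_k}=0$ back from the final model to $X_k$ through the divisorial contractions. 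That input is available (e.g.\ from \cite{Tan18} or \cite{kk-singbook}), so the step is fine, but it should be stated and justified explicitly rather than left at ``passing to a suitable relative model''. Two smaller remarks: your generic-fibre genus computation is correct but redundant, since the vanishing argument gives the full statement directly; and note that both proofs ultimately rest on the same surface-to-curve vanishing theorem in characteristic $p>5$, applied to a contraction with relatively big and nef anti-log-canonical divisor, so neither route weakens the hypotheses.
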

	
	\begin{proof}
		%Let $g\colon Y \to X$ be a small $\mathbb{Q}$-factorialisation and write $K_Y+\Delta=g^*(K_X+$. Consider the morphism $X_{k} \to Z_{k}$. If $F$ is a general fibre then $F \cdot B_{k} > 0$ by assumption. Let $G=\pi^{*}_{k}F$ be a general fibre of $Y_{k} \to Z_{k}$. Then as $\pi_{k,*}\Delta_{k}=B_{k}$ we have $\Delta_{k} \cdot G=B_{k}\cdot F > 0$, so $\Delta_{k}$ is also big over $Z_{k}$. Together with \autoref{l-reduce-Q-fac} we can replace $(X,B)$ by $(Y,\Delta)$ and suppose $\mathbb{Q}$-factoriality of the pair $(X,B)$.
		
		As $K_X$ is not pseudo-effective over $Z$, a $K_{X}$-MMP over $Z$ with scaling of $A$ terminates by \autoref{MMP} with a Mori fibre space $X \to Z'$ over $Z$ as $B_k$ is big over $Z_k$. 
		Since each step is $(K_{X}+B)$-trivial and $X_{k}$ is not contracted, $(X,X_{k}+B)$ remains plt during the MMP and so $X_{k}$ stays irreducible and normal by \autoref{invAdj3}.
		
		Consider a step of this MMP $\phi \colon X \dashrightarrow X'$ and let $B':=\phi_*B$.  We have
		
		\[\begin{tikzcd}
			X \arrow[swap, rd, "g"] \arrow[rr, "\phi", dashed] &                 & X' \arrow[ld, "h"] \\
			& Y \arrow[d] &                        \\
			& Z ,         &                       
		\end{tikzcd}\]
        where $h$ may either be an isomorphism or a small projective birational contraction. As $g$ is $(K_X+B)$-trivial, by \autoref{p-gen-case} we have that $Y_k$ is irreducible and normal as well. Let now $(Y,\Xi)$ be the induced pair on $Y$: we then have
		\[h^{0}(X_{k},m(K_{X_k}+B_{k}))=h^{0}(Y_{k},m(K_{Y_{k}}+\Xi_k))=h^{0}(X'_{k},m(K_{X'_k}+B'_{k}))\] 
		where the first equality also follows from \autoref{p-gen-case} and \autoref{l-stein-invariance} and the latter is \autoref{l-reduce-Q-fac}. Clearly also the sections of $m(K_{X}+B)$ are preserved by this MMP for large divisible $m > 0$.
		
		Hence we can now suppose $X$ admits a Mori fibre space $X \to Z'/Z$, with $B_{k}$ big over $Z_{k}$.
		
		We claim that $Z'=Z$.
		Indeed, suppose for contradiction that there exists a divisor $D$ on $Z'$ which is contracted by $Z' \to Z$. Since $\dim (Z)=2$, $D$ must be contained in $Z'_k$. But then $f^{-1}D$ is a surface inside $X_{k}$, which is irreducible by assumption. It cannot be that $X_{k}$ is contracted to a point over $Z$, thus no such $D$ exists and we have $Z=Z'$.
		
		In particular $-K_{X}$ is ample over $Z$ and hence by \autoref{invAdj2} we have that $f_{k,*} \ox[X_{k}]=\ox[Z_{k}]$ and the result follows from \autoref{l-stein-invariance}.
	\end{proof}

	We can now prove the asymptotic invariance of plurigenera for a family of minimal models of klt pairs.
	
\begin{proof}[Proof of \autoref{thm:ADIOP_SA}]
	By \autoref{l-reduce-Q-fac} we can suppose $X$ is $\mathbb{Q}$-factorial.
	We have $\kappa(K_{X_k}+B_k)=\kappa(K_{X_K}+B_K)=\kappa$, since the Iitaka dimension is deformation invariant for semiample line bundles by \autoref{lemma_DIOK}. Let $f \colon X\to Z$ be the semiample contraction over $R$, so that $K_X+B\sim_{\bQ}f^\ast A$ for some ample $\bQ$-divisor. 
	We now divide in various cases.
		
	If $\kappa=0$, then $K_X+B\sim_{\bQ}0$ and hence we conclude by \autoref{lemma_DIOK}.
	If $\kappa=1$, then this is \autoref{p-1-case}. 
	Finally in the case $\kappa=2$ we conclude by \autoref{p-gen-case} and \autoref{l-stein-invariance}. 	
\end{proof}

	Putting these results together with \autoref{lemma:MMP_in_fam2} and the abundance theorem \autoref{abundance} we deduce an asymptotic invariance result for plurigenera on suitable families. To clarify the conditions we need to impose on the non-canonical locus of the family, we recall an example due to Kawamata.
	
	\begin{example}	\label{ex-kawamata}
		Let $R$ be an excellent DVR and consider the following diagram of $R$-flat families:
		
		\[\begin{tikzcd}
			X \ar[rr, dashed, "\phi"] \arrow[rd, "g",swap] \ar[rdd, bend right,
			"f",swap] &   & X^{+}  \arrow[ld, "g^+"] \ar[ldd, bend left,
			"f^{+}" ',swap]  \\
			&  Z    \ar[d]                             & 	\\
			& \Spec(R) &
		\end{tikzcd}
		\] 
		where
		\begin{enumerate}
			\item $X$ is a terminal threefold and the central fibre $X_k$ has an isolated klt non-canonical singular point;
			\item $g$ is an extremal $K_{X}$-negative flipping contraction;
			\item $X^{+}$ is regular.
		\end{enumerate}
		%	Note that ${Z_0}$ is klt.
		A local model is given by the Francia flip in \cite[Example 4.3]{Kaw99}. 
		As explained by Kawamata, the homomorphism
		$f_*\mathcal{O}_{X}(mK_{X}) \to f_* \mathcal{O}_{X_k}(mK_{X_k})$ is not surjective.
	
		Note that this situation is excluded by condition (2) of \autoref{lemma:MMP_in_fam2} and \autoref{thm:ADIOP_final2}.
		Indeed $\textbf{B}_{-}(K_{X_k})$ clearly contains the flipped locus of $g$, which must contain the non-canonical singular points $p$ of $X_k$. As $X$ is terminal, $p$ is not the restriction of a horizontal non-canonical centre of $X$.
	\end{example}

Imposing the conditions of \autoref{lemma:MMP_in_fam2}, we are able to prove the invariance of plurigenera for families of klt surfaces from \autoref{thm:ADIOP_SA}.
	
	\begin{theorem}\label{thm:ADIOP_final2}
		Let $X \to \Spec(R)$ be a projective contraction and suppose that $(X,B)$ is a threefold klt pair with $\mathbb{Q}$-boundary.
		Suppose that all of the following are satisfied:
		
		\begin{enumerate}
			\item[(1)] $(X, X_{k}+B)$ is plt;
			\item[(2)] if $V$ is a non-canonical centre of $(X,X_k+B)$ contained in ${\mathbf{B}_{-}(K_{X}+B)}$, then $\dim (V_{k})=\dim (V) -1$.
		\end{enumerate}
		Then $X_k$ is normal. Suppose further that at least one of the following holds:
		\begin{enumerate}
			\item $\kappa(K_{X_{k}}+B_{k}) \neq 1$; or
			\item $B_{k}$ is big over $\textup{Proj}(K_{X_{k}}+B_{k})$
		\end{enumerate}	
		Then there is $m_{0} \in \mathbb{N}$ such that 
		$$h^{0}(X_{K},m(K_{X_{K}}+B_{K}))=h^{0}(X_{k},m(K_{X_{k}}+B_{k}))$$
		for all $m \in m_{0}\mathbb{N}$.
		
	\end{theorem}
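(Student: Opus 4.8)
The plan is to reduce to the situation of \autoref{thm:ADIOP_SA} by running a minimal model program in the family and invoking the abundance theorem \autoref{abundance} along the way. First, the normality of $X_k$ is immediate: since $(X,X_k+B)$ is plt and $\pi\colon X\to\Spec(R)$ is a projective contraction of normal schemes, \autoref{invAdj3} applies directly and also shows $(X_k,B_k)$ is klt. To set up the MMP I would reduce to the case where $X$ is $\mathbb{Q}$-factorial. Taking a small $\mathbb{Q}$-factorialisation $g\colon Y\to X$ from \autoref{Q-factorial} and writing $K_Y+Y_k+B_Y=g^\ast(K_X+X_k+B)$, the pair $(Y,Y_k+B_Y)$ is again plt, and since $g$ is small and crepant the correspondence of discrepancies and of diminished base loci (via \autoref{l-stable-base-loci}) ensures that conditions (1) and (2) are inherited by $Y$. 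The equality of plurigenera between $X$ and $Y$ on both the generic and special fibres follows from \autoref{l-reduce-Q-fac} on $X_k$ together with the analogous small birational comparison on $X_K$. Thus we may assume $X$ is $\mathbb{Q}$-factorial.

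Next I would run a $(K_X+B)$-MMP over $\Spec(R)$ with scaling by an ample divisor; since $X_k\sim_{\mathbb{Q},R}0$ this is simultaneously a $(K_X+X_k+B)$-MMP, and it terminates by \autoref{MMP}. The crucial input is \autoref{lemma:MMP_in_fam2}: every birational step is a divisorial contraction, so in particular \emph{no flips occur}; conditions (1) and (2) are preserved at each stage; and the plurigenera $h^0(X_t,m(K_{X_t}+B_t))$ are unchanged for every $t\in\Spec(R)$, in particular for both $t=k$ and the generic point. Over the finitely many steps this yields a single $m_0$ computing all these comparisons simultaneously. The MMP terminates either at a Mori fibre space or at a minimal model $(X',B')$.

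In the Mori fibre space case, the fibre-type part of \autoref{lemma:MMP_in_fam2} forces the contraction to restrict to a fibre-type contraction on the special fibre, so $K_{X'_k}+B'_k$ is not pseudo-effective and $\kappa(K_{X_k}+B_k)=-\infty$; as the contraction is also of fibre type generically, both sides of the desired equality vanish and we are done (this case is compatible only with hypothesis (a)). In the minimal model case, $K_{X'}+B'$ is $\pi$-nef, so abundance \autoref{abundance} makes it $\pi$-semiample, using that $\pi(X')=\Spec(R)$ is positive-dimensional with a closed point of characteristic $p>5$. I would then apply \autoref{thm:ADIOP_SA} to $(X',B')$: the pair $(X',X'_k+B')$ is plt and $K_{X'}+B'$ is semiample, while the hypothesis ``$\kappa\neq 1$ or $B'_k$ big over $\Proj(K_{X'_k}+B'_k)$'' transfers from $X$ because $\kappa(K_{X'_k}+B'_k)=\kappa(K_{X_k}+B_k)$ and because the induced $(K_{X_k}+B_k)$-MMP on the special fibre is over the common ample model, preserving bigness of the boundary over it. This gives the plurigenera equality for $X'$, which transfers back to $X$ through the identifications recorded above.

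The main obstacle I anticipate is bookkeeping rather than a new idea: ensuring that the geometric hypotheses (1), (2) and especially the bigness condition (b) are faithfully transported through both the $\mathbb{Q}$-factorialisation and each MMP step, and confirming that the ample model of $K_{X_k}+B_k$ is unaffected by the birational modifications. All of this has been engineered into \autoref{lemma:MMP_in_fam2}, whose purpose is precisely to rule out the pathology of \autoref{ex-kawamata}, and into \autoref{thm:ADIOP_SA}; once these are in hand the argument is essentially an assembly.
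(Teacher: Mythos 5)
Your proposal is correct and follows essentially the same route as the paper: reduce to the $\mathbb{Q}$-factorial case via \autoref{l-reduce-Q-fac}, run a $(K_X+B)$-MMP over $R$ using \autoref{lemma:MMP_in_fam2} to preserve the hypotheses and the plurigenera at every fibre, dispose of the Mori fibre space case trivially, and in the minimal model case combine \autoref{abundance} with \autoref{thm:ADIOP_SA}. The extra bookkeeping you flag (transport of conditions (1)--(2) and of the bigness hypothesis through the $\mathbb{Q}$-factorialisation and the MMP steps) is exactly what the paper delegates to \autoref{lemma:MMP_in_fam2}, so no new ingredient is needed.
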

	
	\begin{proof}
		By \autoref{l-reduce-Q-fac} we can suppose $X$ is $\mathbb{Q}$-factorial.
		We may run a $(K_X+B)$-MMP over $R$ which terminates by \autoref{MMP}. 
		We call $(Y,\Gamma)$ the end-product of this MMP. Since $(X,B)$ satisfies conditions (1)-(2) of \autoref{lemma:MMP_in_fam2} we deduce $h^{0}(X_{k},m(K_{X_{k}} + B_{k}))=h^0(Y_k, m(K_{Y_k}+\Gamma_k))$ for all sufficiently divisible $m$. 
		In the case where $\kappa(K_{X_{k}}+B_{k})=1$, the condition that $B_{k}$ is big over $\textup{Proj}(K_{X_{k}}+B_{k})$ is also preserved by the MMP.
		
		If $K_{X}+B$ is pseudo-effective then $K_Y+\Gamma$ is nef over $R$. Therefore, by \autoref{abundance}, $K_{X}+B$ is semiample and the result then follows from \autoref{thm:ADIOP_SA}. 
		If $K_X+B$ is not pseudo-effective over $R$, then there is a Mori fibre space structure $(Y,\Gamma) \to Z$. This ensures that neither $K_{Y}+\Gamma$ nor $K_{Y_{k}}+\Gamma_{k}$ are pseudo-effective and thus the result holds trivially. 
	\end{proof}

	\begin{remark}	
		The $p>5$ assumption is essential to the adjunction type results used  in \autoref{p-gen-case}. Even if the MMP was known in lower characteristic, our arguments in this section would not extend immediately.
	\end{remark}
	
	\bibliographystyle{amsalpha}
	\bibliography{refs}
\end{document}